\DeclareMathAlphabet{\mathcalligra}{T1}{calligra}{m}{n}
\DeclareMathAlphabet{\mathcalligra}{T1}{calligra}{m}{n}
\newtheorem{thm}{Theorem}
\newtheorem{thm*}{Theorem*}
 \newtheorem{claim}[thm]{Claim}
 \newtheorem{lem}[thm]{Lemma}
\newtheorem{prop}[thm]{Proposition}
\newtheorem{rem}[thm]{Remark}
\newenvironment{proof}[1][Proof]{\textbf{#1.} }{\ \rule{0.5em}{0.5em}}
\newtheorem{cor}{Corollary}[section]
\newtheorem{defn}{Definition}[section]
\newtheorem{exmp}{Example}[section]
\newcommand{\p}{\partial}
\newcommand{\pd}[1]{\partial_{#1}}
\newcommand{\R}{\mathbb{R}}
\newcommand{\abs}[1]{\left|#1\right|}
\newcommand{\f}{\phi}
\newcommand{\vf}{\varphi}
\renewcommand{\r}{\rho}
\newcommand{\gam}{\gamma}
\newcommand{\lam}{\lambda}
\renewcommand{\a}{\alpha}
\renewcommand{\b}{\beta}
\newcommand{\iso}{\simeq}
\newcommand{\Z}{\mathbb{Z}}
\newcommand{\set}[1]{\left\{ #1 \right\}}
\renewcommand{\L}{\mathcal{L}}
\newcommand{\ep}{\epsilon}
\newcommand{\inp}[1]{\langle #1 \rangle}
\newcommand{\til}[1]{\tilde{#1}}
\newcommand{\im}{\operatorname{Im}}
\newcommand{\grad}{\nabla}
\newcommand{\del}{\delta}
\newcommand{\id}{\operatorname{Id}}
\newcommand{\wed}{\wedge}
\newcommand{\s}{\sigma}
\newcommand{\sig}{\sigma}
\renewcommand{\S}{\Sigma}
\newcommand{\Sig}{\Sigma}
\newcommand{\norm}[1]{\|#1\|}
\renewcommand{\t}{\tau}
\newcommand{\infmax}{\operatorname{infmax}}
\newcommand{\imply}{\Rightarrow}
\newcommand{\F}{\varPhi}
\newcommand{\N}{\mathbb{N}}
\newcommand{\Lam}{\Lambda}
\newcommand{\bs}{\backslash}
\newcommand{\codim}{\operatorname{codim}}
\newcommand{\comp}{\circ}
\newcommand{\Lip}[1]{\norm{#1}_{\operatorname{Lip}}}
\renewcommand{\th}{\theta}
\newcommand{\at}[1]{\big |_{#1}}
\newcommand{\jet}[1]{J^{1}\R^{#1}}
\newcommand{\m}{\mu}
\newcommand{\opname}[1]{\operatorname{#1}}
\newcommand{\z}{\zeta}
\newcommand{\vfd}{\phi}
\newcommand{\mF}{\mathcal{F}}
\newcommand{\mP}{\mathcal{P}}
\newcommand{\x}{\xi}
 \newcommand{\gfqi}{\texttt{g.f.q.i.}}
 \newcommand{\fG}{\textfrak{G}}
 \newcommand{\br}{\bar{R}}
 \newcommand{\bx}{\bar{x}}
 \newcommand{\by}{\bar{y}}
 \newcommand{\bz}{\bar{z}}
 \newcommand{\xl}{x_{\lam}}
 \newcommand{\co}[1]{\operatorname{co}\set{#1}}
 \newcommand{\pdehs}{\Theta}
\newcommand{\evat}[2]{\left. #1 \right|_{#2}}
\newcommand{\cet}[2]{\set{\left. #1 \right| #2}}
\newcommand{\conmani}{\Xi}
\title
{Geometric and viscosity solutions 
for the Cauchy problem of first order}  
\author
{Juliho David Castillo Colmenares}
\begin{document}
\maketitle

 \begin{abstract}
There are two kinds of solutions of the Cauchy problem of first order, the viscosity 
solution and the more geometric minimax solution and in general they are different. 
The aim of this thesis is to show how they are related: 
iterating the minimax procedure during shorter and shorter time intervals 
one approaches the viscosity solution. This can be considered as an extension 
to the contact framework of the result of Q. Wei \cite{W1} in the symplectic case.
 \end{abstract}


\tableofcontents


\section{Foreword}
This article is concerned with the study of weak solutions to the following Cauchy problem for
the Hamilton-Jacobi equation:
\begin{equation}
 \label{HJ}
 \tag{HJ}
  \begin{cases}
   \p_tu(t,x)+H(t,x,\p_x u(t,x),u(t,x))=0, & t\in(0,T]\\
   u(0,t)=v(x), & x\in\R^k.
  \end{cases}
 \end{equation}
In particular, wet focus on the relation between the notion of viscosity solution (introduced by Crandall, Evans and Lions) and the more geometric one of minmax solution (due to Chaperon and Sikorav); this latter is based on a minmax procedure
and relies on the existence of suitable generating families. It is known that when the Hamiltonian
$H(t, x, y, z)$ is not assumed to be convex in the momentum component y, then these two notions
do not (in general) lead to the same set of solutions.  For example, see \cite[prop. 3.8]{wei2013front}.

The main aim of this paper is to describe an iterated minmax procedure - performed on finer
and finer partitions of the time interval - that, in the limit, allows one to recover viscosity solutions
from the minmax ones.

A result in the very same spirit has been recently obtained by Qiaoling Wei \cite{W1}, in the case of Hamiltonians that depend only on $(t, x, p)$, the so-called symplectic case. This work can be seen as a (technical) extension of Wei's result and techniques to
the contact case, namely for Hamiltonians depending on $(t, x, y, z) \in [0, T ] \times \R^k \times \R^k \times \R.$

Although the article  is a technical adaptation - yet, not immediate - of Wei's approach to this
context, we believe that it might be useful as a possible reference for further applications and developments.

 The classical method (section \ref{subsec:ec_car}) to solve this
 problem, for $v\in C^2$ and a short time interval, consists in solving 
characteristics equations
 \begin{align}
 \label{bh:1.3a} 
 \tag{H1}
 \dot{x}&=\pd{y}H  \\
 \label{bh:1.3b}
 \tag{H2}
 \dot{y}&=-\pd{x}H-y\pd{z}H \\
 \label{bh:1.3c}
 \tag{H3}
 \dot{z}&=y\pd{y}H -H,
 \end{align}
to get the characteristic lines, that is, the trajectories 
$$
\F(t, q_{0})\dot{=}(t, x(t,q_{0}), y(t,q_{0}), z(t, q_{0}) );
$$ 
where $x(\cdot),y(\cdot),z(\cdot)$ are the solutions to characteristic equations with initial conditions $q_{0}=( x_0, Dv(x_0), v(x_0))$ at time $t=0$ and then obtain the solution $u(t, x)$ of the Cauchy problem as follows: 
setting $u_t (x) \dot{=} u(t, x),$ for fixed $t,$ and its 1-jet  $j^1u_t(x)=(x,du_t(x),u_t(x))$, 
the image of $j^1u_t$ is the section at time $t$ of c 
$$\L\left( \Lam_{0} \right)=\bigcup_{t\in[0,T]}\F(t, \Lam_{0})=\cet{\F(t)}{t\in[0,T], \F(0)\in \Lam_0 }$$ where $\Lam_{0}=\set{(0,j^1v(x))}$.

This procedure does not  yield a global solution of the problem in the 
whole interval $[0,T]$, as the geometric solution $\L$ it is not always the set
$\{(t,j^1u_t(x))\}$ for a function $u(t, x)$: 
According to \cite[section 3.2]{arnold2013singularities} the \emph{wavefront} 
$$\mF=\pi\left( \L(\Lam_{0}) \right),$$ 
(where $\pi\left( t,x,y,z \right)=(t,x,z)$)
obtained in $(t,x,y,z)$ space by solving the equation
$$dz = -H (t, x, y, z) dt + y dx$$ restricted to $(t,x,y,z)\in \L$ it is
not the graph of a function: 
the projected characteristics $$\pi\left( \F(t, q_{0}) \right)=(t,x(t,q_{0}), z(t,q_{0})), \; q_{0}\in\Lam_{0}$$ may cross after some time. For example, characteristics for conservation laws \cite[example 3.2.5]{evans1997partial}

Whereas in some applications, e.g. to geometrical optics, the wavefront $\mF$ 
can be considered as a solution of the physical problem, 
one is interested in a single-valued solution $u(t,x)$.  
Assuming that the projection of $\mF$ into $(t, x)$ space is onto, one can
construct such a solution as a section of the wavefront, 
selecting a single $u$ over each $(t, x).$ 
When the function $H$ is sufficiently convex with respect to $y$ 
(and $v$ is not too wild at infinity), such a ``selector'' consists in 
choosing for $u(t, x)$ the smallest $u$ with $(t,x,u) \in\mF.$

This min solution happens to be the ``viscosity solution''  which was
first introduced as the viscosity limit when $\ep \to 0^{+}$ of the
solution of the Cauchy problem for the viscous equation 
$$\p_tu(t,x)+H(t,x,\p_{x}u, u)=\ep\triangle_{x}u(t,x),$$ 
and afterwards got a general definition for general nonlinear first
order partial differential equations in the work of Crandall, Evans
and Lions \cite{CEL84} \cite{CEL84,BCD}.

In the non-convex case the viscosity solution may not 
be a section of the wavefront (see for example \cite{Che}). On the other
hand, Chaperon introduced in \cite{Ch01} weak solutions whose graph is
a section of the wavefront, obtained by a ``minimax'' procedure which
generalizes the minimum considered in the convex case and relies on
the existence of suitable generating families for the geometric solution. 

Let's explain in more detail this procedure: First, considere the \emph{legendrian submanifold} $\Lam=\vf^{t}\left( j^{1}v \right) \subset J^{1}\R^{k},$ for some $\R^{k},$ where $\vf^{t}$ is the flow generated by \eqref{bh:1.3a}-\eqref{bh:1.3c} and therefore $\F(t)=\left( t , \vf(t) \right).$ It turns out that there exist a so called \emph{generating function quadratic at infinity} $$S:[0,T]\times \R^{k} \times \R^{q}, \left( t,x,\xi \right)\to S_{H,v}^{t}(x,\xi)$$ (for some $\R^{q},$ a family of parameters $\xi$) such that
\begin{equation}
 \label{0.1}
 \Lam=\cet{\left( x, \partial_{x}S_{H,v}^{t}(x,\xi), S_{H,v}^{t}(x,\xi) \right)}{\partial_{\xi}S_{H,v}^{t}(x,\xi)=0}.
\end{equation}

Now we can define a \emph{minimax selector} such that 
$$
u(t,x)=\infmax_{\xi} S_{H,v}^{t}(x,\xi)
$$
is a generalized solution of Cauchy problem \eqref{HJ}. It is called the \emph{minimax solution.} But in general this is not a classical solution. Indeed $u_{t}\in C^{Lip}\left( \R^{k} \right).$ Although, we have been considering $v\in C^{2}(\R^{k}),$ in the more general setting of \emph{Clarke calculus,} we can consider $v\in C^{Lip}(\R^{k}),$ and thus for a given $H\in C^{2}(J^{1}\R^{k}),$ we obtain an operator $$R_{H}^{0,t}:C^{Lip}(\R_{H}^{k})\to C^{Lip}(\R^{k}), \; v(\cdot)\mapsto \infmax_{\xi} S_{H,v}^{t}(\cdot, \xi).$$ Of course, we can take another initial initial time $t_{0}=s\neq 0,$ for a different boundary condition $u(s,x)=v(x).$

One may to try to get a solution as a limit obtained by dividing a given time 
interval into small pieces and iterating the minimax procedure step by step. 
Our goal is to show that when the size of the time intervals go to zero, 
one indeed gets the viscosity solution as the limit:

\begin{thm}[Main theorem]
\label{thm:main}
 Suppose 
 $$
 \begin{cases}
  H\in C_c^2( [0,T] \times J^1(\R^{k}) )\\
  v\in C^{Lip}(\R^{k})
 \end{cases}
$$
Then the viscosity solution is the limit of iterated minimax solutions
for problem \eqref{HJ}
on $[0,T]$.
\end{thm}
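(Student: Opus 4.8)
The plan is to compare the iterated minmax operator with the genuine evolution semigroup $T_H^{s,t}$ furnished by the viscosity solution, and to bound their difference by a single modulus of continuity evaluated at the mesh of the partition. Fix a partition $\D\colon 0=\t_0<\t_1<\dots<\t_N=T$ of $[0,T]$, write $v_0=v$ and $v_j=R_H^{\t_{j-1},\t_j}v_{j-1}$, so that the iterated minmax solution at time $T$ is $v_N=R_H^{\t_{N-1},\t_N}\comp\dots\comp R_H^{\t_0,\t_1}v$. I would organize the argument around four ingredients: uniform a priori bounds for the iterates, the good functional-analytic behaviour of the viscosity semigroup, a one-step consistency estimate between minmax and viscosity, and a telescoping summation.

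First I would establish uniform bounds. Since $H\in C_c^2$, the contact Hamiltonian flow $\vf^{t}$ of \eqref{bh:1.3a}--\eqref{bh:1.3c} has uniformly bounded derivatives on its support, so each step $R_H^{\t_{j-1},\t_j}$ sends an $L$-Lipschitz function $w$ with $\norm{w}_\infty\le M$ to a function that is $L'$-Lipschitz with norm $\le M'$, where $L',M'$ depend only on $L,M,T$ and $H$ and not on $\D$. Thus all intermediate iterates $v_j$ lie in one fixed bounded, equi-Lipschitz family $\K$ (which also yields compactness by Arzel\`a--Ascoli, should a subsequential argument be needed). Next I would record the two structural properties of the viscosity operator $T_H^{s,t}$ that carry the proof. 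Because $H$ is Lipschitz in the contact variable $z$, the comparison principle of Crandall--Evans--Lions holds and gives the quasicontraction $\norm{T_H^{s,t}w_1-T_H^{s,t}w_2}_\infty\le e^{C(t-s)}\norm{w_1-w_2}_\infty$ with $C=\norm{\pd{z}H}_\infty$; and by uniqueness $T_H$ is a genuine evolution, $T_H^{r,t}\comp T_H^{s,r}=T_H^{s,t}$.

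These two facts let me telescope through the hybrid functions $w_j\dot{=}T_H^{\t_j,T}(v_j)$, for which $w_0=T_H^{0,T}v$ and $w_N=v_N$. Using the semigroup property, $w_j-w_{j-1}=T_H^{\t_j,T}(R_H^{\t_{j-1},\t_j}v_{j-1})-T_H^{\t_j,T}(T_H^{\t_{j-1},\t_j}v_{j-1})$, so the quasicontraction gives
\[
 \norm{v_N-T_H^{0,T}v}_\infty\le\sum_{j=1}^N\norm{w_j-w_{j-1}}_\infty\le e^{CT}\sum_{j=1}^N\norm{R_H^{\t_{j-1},\t_j}v_{j-1}-T_H^{\t_{j-1},\t_j}v_{j-1}}_\infty.
\]
Crucially, this uses only the good behaviour of $T_H$ and never asks the minmax operator itself to be monotone or nonexpansive, which in the non-convex setting it is not.

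The heart of the proof, and the step I expect to be the main obstacle, is the one-step consistency estimate: for every $w\in\K$ and $[s,t]\subset[0,T]$,
\[
 \norm{R_H^{s,t}w-T_H^{s,t}w}_\infty\le(t-s)\,\om(t-s),
\]
with a modulus $\om$ independent of $w$ and of the interval. Granting it, the telescoped sum is at most $e^{CT}\,T\,\om(\operatorname{mesh}\D)\to0$ as $\operatorname{mesh}\D\to0$, which proves the theorem. To establish the estimate I would show that both operators agree, to order $o(t-s)$, with the common short-time propagator built from the characteristic flow \eqref{bh:1.3a}--\eqref{bh:1.3c}: over a short interval the generating family of \eqref{0.1} is an explicit perturbation of the quadratic action, and since $w$ is uniformly Lipschitz the relevant parameters $\xi$ concentrate within $O(t-s)$ of $x$, which localizes the comparison and makes the minmax value $\infmax_\xi S_{H,w}^{t}(x,\xi)$ and the viscosity value share the same first-order expansion. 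The genuine difficulty—and the reason this is a real extension of Wei's symplectic result rather than a transcription—is twofold: the data $w$ is merely Lipschitz, so the expansion must be carried out without second derivatives yet uniformly over $\K$; and the contact coupling terms $-y\,\pd{z}H$ in \eqref{bh:1.3b} and $y\,\pd{y}H-H$ in \eqref{bh:1.3c} enter both the generating family and the short-time action, so one must verify that the $z$-dependence perturbs the two expansions by the same order. Controlling these contact contributions uniformly is where the careful, and not immediate, work lies, all the more since in the non-convex regime the viscosity solution need not be a section of the wavefront, so the two values cannot simply be matched branch by branch.
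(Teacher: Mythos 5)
Your scheme is the classical ``stability $+$ consistency $\Rightarrow$ convergence'' telescoping argument, and its soft ingredients (uniform Lipschitz and sup bounds on the iterates; the quasicontraction and evolution properties of the viscosity semigroup) are fine and match Lemma \ref{w1:lem:3.17} and standard comparison. The gap is the one-step consistency estimate $\norm{R_H^{s,t}w-T_H^{s,t}w}_\infty\le (t-s)\,\om(t-s)$ \emph{uniformly over the equi-Lipschitz class} $\K$. This is not merely a hard step you defer: it fails in exactly the non-convex setting the theorem is about. The class $\K$ must contain kinked functions (the iterates $v_j$ are only Lipschitz and minimax solutions develop corners), and at a kink the two operators disagree at order exactly $t-s$. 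Concretely, for $k=1$, $H=H(y)$ non-convex and $w(x)=-\abs{x}$, Hopf's formula gives $T_H^{0,t}w(0)=-t\max_{\abs{p}\le 1}H(p)$, while the critical values of the short-time generating family $w(x_0)+p(x-x_0)-tH(p)$ over $x=0$ are $-tH(p^*)$ with $p^*$ ranging over the critical points of $H$ on $[-1,1]$, and the minimax selects a saddle-type value which for suitable $H$ is not the maximum; hence $\norm{R_H^{0,t}w-T_H^{0,t}w}_\infty\ge ct$ with $c>0$ independent of $t$, and your telescoped sum only yields the useless bound $O(T)$. (This is the same mechanism as \cite[prop.~3.8]{wei2013front}, cited in the paper precisely to show minimax $\ne$ viscosity.) Your heuristic that the parameters concentrate near $x$ so that both values share a first-order expansion is valid at differentiability points of $w$ but breaks down at kinks, where the whole Clarke differential $\p w(x)$ enters and the two selectors pick different branches of the fan.

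The paper instead runs a Barles--Souganidis-type soft argument: Arzel\`a--Ascoli on the equi-Lipschitz iterates (Lemma \ref{w1:lem:3.17}), a semigroup identity for the limit (Claim \ref{w1:prop:3.18-1}), and then verification of the viscosity sub/supersolution inequalities for the \emph{limit} by testing against smooth $\psi$ touching it from above or below; at such a point \emph{monotonicity} of the minimax operator (Corollary \ref{monotone}) replaces the Lipschitz iterate by $\psi_\t$, for which the short-time minimax is the classical solution, so consistency is only ever invoked on $C^2$ data. Uniqueness of viscosity solutions then upgrades subsequential to full convergence. Note that your stated reason for avoiding this route --- that the minimax operator is not monotone in the non-convex setting --- is incorrect: monotonicity of $R_H^{s,t}$ follows from the monotone dependence of the generating family on $v$ and has nothing to do with convexity of $H$; what fails for non-convex $H$ is the identity minimax $=$ viscosity, not monotonicity. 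Reinstating monotonicity and weakening your consistency requirement to smooth data would essentially turn your argument into the paper's.
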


Indeed, it is the aim of this dissertation to give a complete proof of this results.  Let's explain the meaning if theorem \ref{thm:main} in more detail. Take a partition of $[0,T]:$
$$
0=t_{0}<t_{1}<...<t_{N}=T,
$$
and for $i=0,...,N-1,$ define the so called \emph{iterated minimax solution}
$$
\begin{cases}
 u_{0}(t,x)=v(x) \\
 u_{i+1}(t,x)=R_{H}^{t_{i},t_{i+1}}\left( u_{i}(t,x) \right).
\end{cases}
$$

{Our main result result establishes that $u_{N}(t,x)$ converges uniformily on compacts to the viscosity solution of Cauchy problem \eqref{HJ} when $$\sup\set{t_{i+1}-t_{i}}\to 0.$$}

This extends the result obtained by Q. Wei 
\cite{W1} in the symplectic framework to the contact one. Indeed, Prof. Wei was student of Prof. Marc Chaperon, who in \cite{Ch01} had already defined generating families and minimax in the contact setting. But in the way that minimax solutions were defined in that paper, they weren't suitable for constructing a generating family similar to Wei managed to construct.

However, in his Ph.D. thesis \cite{Bh}, Prof. Mohan Bhupal defined a generating function for contactomorphisms in a more convinient manner, and we took advantage of his work to develope a theory in contact framework which generalize the results obtained by Wei.

As Prof. Chaperon told us in a brief talk a couple years ago, this problem was important in order to understand better the geometry of viscosity solutions. Although the theorem in symplectic was already important because its relations with classical mechanics, our generalization allows to study a more general class of problems related with non-linear first order partial differential equations. 

For convenience of the reader, we give a very brief guide to this thesis: At first, in part \ref{ch:gf}, we deduce a explicit formula to obtain a generating family for legendrian submanifolds of the form $\F(t,j^{1}v)$, and generalize this construction to a more general setting of Clarke calculs in order to construct iterated minimax solutions.

Next, in part \ref{gen-sol}, we define minimax selector, iterated minimax solutions and prove the relations established above with viscosity solutions. We followed closely the methods of Prof. Wei in order to get the desired generalization, and so is in this section where our contribution appears. 

 At the end of this work, three appendices are given: The first one is about contact topology, in particular, legendrian submanifolds and conctactomorphisms; the second one is about Clarke calculus; and the last one about minimax principles. A more detailed treatment of these topics could be consulted in \cite{book:7935}, \cite{CL1} and \cite{STR}, respectively.
 
 Finally, let's us remark, as Arnold did in his lectures ``Contact Geometry and Wave Propagation'' given at the University of Oxford 
 \begin{quote}
 ``The relations between symplectic and contact geometries are similar to those between linear algebra and projective geometry. First, the two related theories are formally more or less equivalent: every theorem in symplecticgeometry may be formulated as a contact geometry theorem, and any assertionin contact geometry may be translated into the language of symplectic geometry. Next, all the calculations look algebraically simpler in the symplectic case, but geometrically things are usually better understood when translated into the language of contact geometry.''
 \end{quote} 
Following that spirit, as far as posible, we has formulated our results as their symplectic counterpart are given on \cite{W1}. One more advice from Arnold's lectures:
\begin{quote}
	Hence one is advised to calculate symplectically but to think rather in contact geometry terms.
\end{quote}

\part{Generating Families}
\label{ch:gf}

 The main goal of this part is to construct \emph{generating families for Legendrian submanifolds} $\vf^{t}(j^{1}v),$ where $\vf^{t}$ is the flow generated by \eqref{bh:1.3a}-\eqref{bh:1.3c}. As a previous step, we have to demonstrate the existence of \emph{generating functions} for contactomorphism $\vf^{t}.$
 
 These constructions are fundamental for our work, because they will allow us to generalize in a very natural way those formulas given in \cite{W1}. Following that work, we will be  able to define explicitly \emph{iterate minimax solutions} in the next section, but in the more general setting of contact topology, that is, for Hamiltonians depending on $(t,x,\partial_{x}u,u).$
 
 At the end of the section, we extend the concept of generating families using Clarke calculus \cite{CL1}, allowing us to consider initial conditions $v\in C^{Lip}.$ This step is crucial because minimax solutions are not differentiable anymore but only Lipschitz, and we will iterate solutions of this type.
 
\section{Generating Functions}
Consider $J^1(M)=T^{*}M\times \R$ endowed with the natural contact structure
$\ker \a$ given in local coordinates $(x,y,z)$ for $T^*M \times \R$  by
$\a=dz-ydx.$ We denote by $\pi: J^1(M)\to M$ the canonical projection.
Recall that a submanifold $L$ of $J^1(M)$ is called Legendrian if
$T_pL\subset\evat{\ker \a}{p}$ for any $p\in L$ and dim $L=$ dim $M$. 

Suppose $S\in C^2(M\times \R^q)$ has fiber derivative 
$\p_{\xi}S$ transversal to $0.$ Then
$$
\Lam:=\set{( x, \p_{x}S(x,\xi), S(x,\xi) )|
\p_{\xi}S(x, \xi)=0}
$$
is a Legendrian submanifold of $J^1M$ and we say that $S$ a 
\emph{generating family} of $\Lam.$  

\label{sec:gfqi}
\begin{defn}[$\gfqi$]
  \label{defn:gfqi}
 A generating family $S\in C^2(M\times\R^q)$ is 
\emph{quadratic at infinity} ($\gfqi$ for short) if there exists a 
\emph{non degenerate quadratic form} $Q$, such that for any compact 
$K\subset M$, 
$$\abs{\p_{\xi}( S(x,\xi)- Q(\xi) )}$$
is bounded on $K\times \R^q.$
\end{defn}
Consider the sub-level sets $S^a_x :=\{\xi:S(x,\xi)\le a\}$, for $a$
large enough the homotopy type of $(S^a_x,S^{-a}_x)$ does not depend on $a$ 
and coincides with the homotopy type of $(Q^a , Q^{-a})$, so
we may write it as $(S^\infty_x, S^{-\infty}_x)$. 
If the Morse index of $Q$ is $k$, then 
\[H_i(S^\infty_x, S^{-\infty}_x;\Z_2)=H_i(Q^\infty, Q^{-\infty};\Z_2 )\cong
  \begin{cases}\Z_2,& i=k,\\ 0, & i\ne k. \end{cases}\]
For more details, see \cite[section 7.2.3]{FC2015}

  \begin{defn}
The minimax function is defined as 
\[R_S(x) := \inf_{[\s]=A}\max_{\xi\in\abs{\s}} S(x,\xi),\]
where $A$ is a generator of the homology group $H_k(S^\infty_x,S^{-\infty}_x;\Z_2)$
and $\abs{\s}$ denotes the image of the relative singular homology cycle $\s$. 
  \end{defn}

  \begin{rem}
 In \cite{W2}, Q. Wei proved that the minimax values of a function quadratic nondegenerate at infinity are equal when defined in homology or cohomology with coefficients in a field. However, by an example of F. Laudenbach, this is not always true for coefficients in a ring and, even in the case of a field, the minimax-maximin depends on the field.
 
 Although this is the most common manner to describe the \emph{minimax function}, we will give a more concrete description of the minimax in section \ref{gen-sol}.
\end{rem}
  
The function $R_S$ is determined
by $\Lam$ and does not depend on the particular choice of the $\gfqi$
$S$ according to the following result
\begin{thm}\cite[prop 2.12]{Davi-1999}
 \label{chaperon:thm:theret}
The $\gfqi$ of a Legendrian submanifold contact isotopic to the zero
section is  unique up to the following operations relating $S_1$ to $S_2$.
\begin{description}
\item[Stabilitation] $S_2(x,\xi,\eta)=S_1(x,\xi)+q(\eta)$ with $q$ a 
non degenerate quadratic form.
\item[Diffeomorphism] $S_2(x,\xi)=S_1(x,\psi(x,\xi))$ with $\psi(x,\cdot)$
a diffeomorphism $\forall x\in M$.
\end{description}
 \end{thm}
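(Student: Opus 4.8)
The easy direction---that \textbf{Stabilitation} and \textbf{Diffeomorphism} leave the generated Legendrian $\Lam$ unchanged---is a direct check from the defining relation \eqref{0.1}, so the real content is the converse: any two $\gfqi$'s $S_1,S_2$ of the same $\Lam$ are related by a finite chain of these two moves. The plan is to carry this out in three stages. First I would normalize the two families to a common fiber dimension and a common quadratic form at infinity. Second, I would use the hypothesis that $\Lam$ is contact isotopic to the zero section to transport the whole problem onto the zero section. Third, I would settle the model case of the zero section by a parametrized Morse lemma in the fiber variables, and then transport the resulting identification back.

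For the first stage I would stabilize: replacing $S_i(x,\xi)$ by $S_i(x,\xi)+q_i(\eta)$ for suitable nondegenerate quadratic forms lets me assume the fiber variables live in a common $\R^{q}$ and that the two quadratic forms at infinity agree. Here one must check that their Morse indices already coincide, which is forced because both families generate the same $\Lam$ and $\Lam$ is isotopic to the zero section, so the stable homotopy type of the sub-level pair $(S_x^{\infty},S_x^{-\infty})$ is an invariant of $\Lam$ alone. For the second stage, let $\vf^{t}$ be a contact isotopy with $\vf^{0}=\id$ and $\vf^{1}(0_{J^{1}\R^{k}})=\Lam$. Using the generating-function-of-a-contactomorphism construction developed in Part \ref{ch:gf}, each $\vf^{t}$ carries a generating function, and composing $S_i$ with the one attached to $\vf^{1}$ (via the composition formula for generating families) produces a $\gfqi$ $\til{S}_i$ of the zero section. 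The point to verify is that this composition is itself realized by a sequence of stabilizations and fiber-preserving diffeomorphisms; I would establish this by fragmenting $\vf^{t}$ into small time steps on which each factor is $C^{1}$-close to the identity and hence graphical, so that each elementary composition is exactly one stabilization followed by a fiber diffeomorphism.

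The third and decisive stage is uniqueness for the zero section. A $\gfqi$ $S$ generating $0_{J^{1}\R^{k}}$ satisfies $\p_{\xi}S(x,\xi)=0 \imply \p_{x}S=0$ and $S=0$, so its fiber critical set is a graph over $x$ along which both $S$ and $\p_{x}S$ vanish. I would show that $S$ is fiber-diffeomorphic, after one stabilization, to its quadratic form $Q$ at infinity: away from a compact set in $\xi$ this is precisely the $\gfqi$ bound of Definition \ref{defn:gfqi}, while near the critical locus it is a Morse lemma in $\xi$ with $x$ as parameter, obtained by interpolating $S_s=(1-s)Q+sS$ and integrating the Moser-type vector field that trivializes this family. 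Applying this to both $\til{S}_1$ and $\til{S}_2$ identifies them up to the allowed moves, and transporting back along $\vf^{t}$ by the mechanism of stage two completes the argument.

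The main obstacle throughout is uniformity at infinity: every move must preserve the quadratic-at-infinity structure so that the pairs $(S_x^{\infty},S_x^{-\infty})$, and hence the minimax value $R_S$, stay well defined, and the Moser vector field in the base case must be shown to be complete with growth in $\xi$ controlled uniformly in $x$ over compacts. The fragmentation of stage two is the other delicate point, since one must guarantee that composition with the generating function of a near-identity contactomorphism is achieved \emph{exactly} by stabilization and a fiber-preserving diffeomorphism, with no residual additive function of $x$. In the contact setting this residue is genuinely excluded---unlike the Lagrangian case, where an additive constant is harmless---precisely because the value of $S$ at a fiber-critical point records the $z$-coordinate of $\Lam$, so it is already pinned down by \eqref{0.1}.
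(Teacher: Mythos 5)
First, a point of comparison: the paper does not prove this statement at all — it is quoted from Th\'eret \cite{Davi-1999} as an external result, so there is no internal proof to measure yours against. Your sketch follows the strategy of that literature (reduce to the zero section by composing with generating functions of the isotopy, settle the zero-section case by a deformation, control everything at infinity), but two of its three stages rest on precisely the claims that constitute the hard content of the cited proof, and as written they would fail.

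The decisive gap is stage three. The fiber-critical locus of a \gfqi\ of the zero section is a set on which $S$ and $dS$ both vanish, but nothing forces these to be nondegenerate critical points of $S(x,\cdot)$ in the fiber: over a given $x$ the critical set can be positive-dimensional and degenerate, so a ``Morse lemma in $\xi$ with $x$ as parameter'' does not apply. Worse, the interpolation $S_s=(1-s)Q+sS$ need not consist of \gfqi's generating the zero section for intermediate $s$, and the Moser equation $\p_sS_s+dS_s(X_s)=0$ cannot be solved for $X_s$ exactly at the fiber-critical points, where $\p_{\xi}S_s$ vanishes; this is where Th\'eret instead invokes uniqueness of the critical value and a global gradient-flow deformation rather than a pointwise normal form. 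The second gap is stage two: the assertion that composing $S_i$ with the generating function of a near-identity contactomorphism is \emph{exactly} one stabilization followed by a fiber-preserving diffeomorphism is not an elementary verification — it is the path-lifting lemma that carries most of the weight of the theorem, and in the contact setting the composition is not even additive (the recursion \eqref{c-0} for the $z_j$ makes the composed family depend nonlinearly on its pieces), so ``graphical for small time'' does not reduce it to a quadratic stabilization. Finally, in stage one the appeal to invariance of the stable homotopy type of $(S^{\infty}_x,S^{-\infty}_x)$ to match Morse indices is circular — that invariance is a corollary of the uniqueness theorem you are trying to prove — though it is also unnecessary, since any two nondegenerate quadratic forms become linearly equivalent after suitable independent stabilizations of each. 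In short, the architecture is the standard one, but the proposal assumes the two lemmas that the cited reference exists to establish.
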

The following definition is common in the literature
 \begin{defn}[strict $\gfqi$]
  \label{defn:gfqi2}
A generating function is \emph{strictly quadratic at infinity} 
if there is a non degenerate quadratic form $Q$ such that 
$S(x,\xi)=Q(\xi)$ for $(x,\xi)$ outside some compact set (we will say strict $\gfqi$ for short).
 \end{defn}

 This definition is more appropriate to work with hamiltonians $$H\in C^1([0,T] \times J^1M),$$ for $M$ a compact manifold. However under some restrictions, both definitions of $\gfqi$ are equivalent
 \begin{prop}\cite[prop. 1.6]{Vit2006}

  Suppose there is a constante $C$ suh that
  \begin{enumerate}[(a)]
   \item $\norm{\grad( S-Q )}_{C^0}< C$
   \item $\sup\set{\abs{S-Q}|x\in \R^k, \abs{\xi}\leq r}<Cr$   
  \end{enumerate}
  Then $L_{S}$ has a strict $\gfqi$.
 \end{prop}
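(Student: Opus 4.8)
The plan is to modify $S$ by a fibrewise cutoff so that it becomes equal to $Q$ for large $\abs{\xi}$ while leaving the Legendrian $L_S$ untouched; the two hypotheses (a)--(b) are exactly what is needed to prevent the cutoff from creating spurious fibre critical points. Write $f:=S-Q$, so that $f\in C^2$ and, by (a), $\norm{\grad f}_{C^0}<C$. Since $Q$ is a nondegenerate quadratic form there is a symmetric invertible matrix $A$ with $Q(\xi)=\tfrac12\langle A\xi,\xi\rangle$ and $\partial_\xi Q(\xi)=A\xi$, whence $\abs{\partial_\xi Q(\xi)}\ge c\abs{\xi}$ for $c:=\sigma_{\min}(A)>0$. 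Combining this with (a) gives, for every $x$,
\[
\abs{\partial_\xi S(x,\xi)}\ge \abs{\partial_\xi Q(\xi)}-\abs{\partial_\xi f(x,\xi)}\ge c\abs{\xi}-C,
\]
so that all fibre critical points of $S$ (the points recorded by $L_S$) lie in the ball $\set{\abs{\xi}\le R_0}$ with $R_0:=C/c$, \emph{uniformly in} $x$. This uniform localization is what makes the noncompactness of $M=\R^k$ harmless: strictness will be asked only in the fibre direction.

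Next I would fix a smooth radial cutoff $\rho_\lambda(\xi)=\chi(\abs{\xi}/\lambda)$, where $\chi\colon\R\to[0,1]$ equals $1$ on $(-\infty,1]$, equals $0$ on $[2,\infty)$, and $\abs{\chi'}\le 2$; here $\lambda$ is a large parameter to be chosen with $\lambda\ge R_0$. Define
\[
\tilde S(x,\xi):=Q(\xi)+\rho_\lambda(\xi)\,f(x,\xi).
\]
On $\set{\abs{\xi}\le\lambda}$ one has $\rho_\lambda\equiv1$, hence $\tilde S=S$ there; on $\set{\abs{\xi}\ge 2\lambda}$ one has $\rho_\lambda\equiv0$, hence $\tilde S=Q$. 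Since $\rho_\lambda$ depends only on $\xi$, at any point where $\partial_\xi\tilde S=0$ and $\abs{\xi}\le R_0$ we get $\partial_x\tilde S=\rho_\lambda\,\partial_x f=\partial_x S$ and $\tilde S=S$, so $\tilde S$ and $S$ record the same $1$-jet. Thus, provided I can show that $\tilde S$ has no fibre critical points outside $\set{\abs{\xi}\le R_0}$, it will follow that $L_{\tilde S}=L_S$ while $\tilde S$ is strictly quadratic at infinity (equal to $Q$ for $\abs{\xi}\ge 2\lambda$, uniformly in $x$); transversality of $\partial_\xi\tilde S$ to $0$ and the $C^2$ regularity are inherited from $S$ on the region where the two functions coincide.

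The heart of the argument — and the step I expect to be the main obstacle — is the estimate on the transition annulus $\set{\lambda\le\abs{\xi}\le 2\lambda}$, where $\rho_\lambda$ is genuinely varying. There
\[
\partial_\xi\tilde S=\partial_\xi Q+\rho_\lambda\,\partial_\xi f+(\partial_\xi\rho_\lambda)\,f,
\]
and I would bound the three terms separately: $\abs{\partial_\xi Q}\ge c\abs{\xi}\ge c\lambda$ by coercivity; $\abs{\rho_\lambda\,\partial_\xi f}\le C$ by (a); and, crucially, $\abs{\partial_\xi\rho_\lambda}\le 2/\lambda$ together with the \emph{linear} bound $\abs{f}<2C\lambda$ on $\set{\abs{\xi}\le 2\lambda}$ furnished by (b) gives $\abs{(\partial_\xi\rho_\lambda)\,f}\le 4C$. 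Hence $\abs{\partial_\xi\tilde S}\ge c\lambda-5C$, which is strictly positive once $\lambda>5C/c$. This is exactly where hypothesis (b) earns its keep: the factor $1/\lambda$ produced by differentiating the cutoff must be absorbed by the growth of $f$, and only \emph{at most linear} growth keeps the product bounded while the coercive term $c\lambda$ runs off to infinity. Choosing $\lambda>\max\set{R_0,\,5C/c}$ therefore yields $\partial_\xi\tilde S\ne0$ throughout $\set{\abs{\xi}>R_0}$: on $\set{R_0<\abs{\xi}\le\lambda}$ nonvanishing is immediate since there $\tilde S=S$ and $\abs{\partial_\xi S}\ge c\abs{\xi}-C>0$, on the annulus it is the estimate just made, and on $\set{\abs{\xi}\ge2\lambda}$ it holds because $\partial_\xi\tilde S=A\xi\ne0$. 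This completes the construction of a strict $\gfqi$ for $L_S$.
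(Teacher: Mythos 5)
The paper does not actually prove this proposition: it is quoted verbatim from Viterbo (\cite[prop.\ 1.6]{Vit2006}) and used as a black box, so there is no in-text argument to compare yours against. On its own merits your cutoff construction is correct and is essentially the standard proof of this kind of statement. The two key points are both handled properly: hypothesis (a) plus the coercivity $\abs{A\xi}\ge c\abs{\xi}$ confines all fibre critical points of $S$ to $\abs{\xi}\le C/c$ uniformly in $x$, and hypothesis (b) is used exactly where it must be, to cancel the $1/\lambda$ coming from $\partial_\xi\rho_\lambda$ against the at-most-linear growth of $f=S-Q$ on $\abs{\xi}\le 2\lambda$, so that the error terms stay $O(1)$ while the coercive term grows like $c\lambda$. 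The verification that $L_{\tilde S}=L_S$ (same fibre critical set, contained in the region where $\tilde S=S$, hence the same $1$-jets) is complete, and the smoothness of $\rho_\lambda$ at $\xi=0$ is unproblematic since $\rho_\lambda\equiv 1$ there. The one caveat worth stating explicitly is definitional rather than mathematical: the paper's Definition of a strict \gfqi\ literally asks $S=Q$ outside a compact subset of $M\times\R^q$, which is unattainable for $M=\R^k$ noncompact; your construction gives $\tilde S=Q$ for $\abs{\xi}\ge 2\lambda$ uniformly in $x$, i.e.\ strictness in the fibre direction only, which is the standard (and surely intended) reading — you flag this yourself, so it is not a gap in your argument.
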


Recall that a diffeomorphism $\vf: J^1M\to J^1M$ is called
\emph{contactomorphism} if $D\vf(\ker \a)=\ker \a$
or equivalently $\vf^{*}\a=g \a$ with $g\in C^1(J^1M,\R-\set{0})$. 

From here, we follow the construction given in \cite[section 6]{Bhupal-2001}.

\begin{defn}\label{fg} 
Let $\vf:\jet{k} \to \jet{k}$ be a contactomorphism. A \emph{generating function} for
$\vf$ is a function $\F:\R^{2k+1}\to\R$ such that $1-\pd{z}\F(x,Y,z)$
never vanishes and the set of equalities
 \begin{align}
  \label{cx} \tag{cx}
   X-x&=\pd{Y}\F(x,Y,z)\\
   \label{cy}\tag{cy}
   Y-y&=-\pd{x}\F(x,Y,z) - y\pd{z}\F(x,Y,z)\\
   \label{cz}\tag{cz}
   Z-z&=( X-x )Y-\F(x,Y,z)
 \end{align}
is equivalent to $\vf(x,y,z)=( X,Y,Z ).$
\end{defn}
\begin{rem}
 The contactomorphism $\vf$ has compact support if and only if $\F$ does.
\end{rem}
\begin{prop}\cite[prop. 6.1]{Bhupal-2001}\label{exgf}
\begin{enumerate}[(i)]
 \item A contactomorphism $\vf:\jet{k}\to \jet{k}$ with
$\norm{\mathds1-d\vf(p)}<\dfrac{1}{2}$ for all $p\in \jet{k}$
has a unique generating function. 
  \item  If $\F\in C_c^{\infty}(\R^{2k+1})$ has  sufficiently small first and second
   derivatives, there exists a unique
   contactomorphism $\vf: \jet{k} \to \jet{k}$ having $\F$ as
   generating function
\end{enumerate}
\end{prop}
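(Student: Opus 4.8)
The proposition is the familiar pair ``a near-identity contactomorphism admits a unique generating function'' together with its converse, and I would prove the two parts by the same mechanism: pass to suitable mixed coordinates, then either integrate a closed one-form or invert an equation by a contraction. Throughout, the smallness hypotheses are used only to keep every relevant linear map within $1/2$ of the identity, hence invertible.

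\emph{Part (i): building $\F$ from $\vf$.} Write $\vf(x,y,z)=(X,Y,Z)$ and introduce the mixed map $\Psi(x,y,z)=(x,Y(x,y,z),z)$. Its differential differs from $\mathds{1}$ only in the $Y$-row, which is a row of $d\vf-\mathds{1}$; hence $\norm{d\Psi-\mathds{1}}<1/2$, so $\Psi$ is a global diffeomorphism of $\R^{2k+1}$. I may therefore treat $X-x$, $Z-z$, the source momentum $y$, and the conformal factor $g$ (defined by $\vf^{*}\a=g\,\a$) as smooth functions of $(x,Y,z)$. In these coordinates the relations \eqref{cx}--\eqref{cz} prescribe the entire gradient of the sought $\F$, namely $\pd{Y}\F=X-x$, $\pd{z}\F=1-g$, and $\pd{x}\F=y\,g-Y$. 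Thus $\F$ exists precisely when the one-form $\omega=(y g-Y)\,dx+(X-x)\,dY+(1-g)\,dz$ is closed, and this closedness is exactly the infinitesimal content of the contactomorphism identity $\vf^{*}\a=g\,\a$ rewritten in the $(x,Y,z)$ chart. As $\R^{2k+1}$ is contractible, $\omega=d\F$ for a $\F$ unique up to a constant, which \eqref{cz} pins down; $\F$ inherits compact support from the remark preceding the statement, and $1-\pd{z}\F=g$ is nonvanishing because $\vf$ is a contactomorphism. Uniqueness is then immediate: two generating functions for the same $\vf$ have the same (fully determined) gradient, hence differ by a constant, again fixed by \eqref{cz}.

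\emph{Part (ii): building $\vf$ from $\F$.} Here I would run the relations forward. The only implicit step is \eqref{cy}, which for fixed $(x,y,z)$ reads $Y=y-\pd{x}\F(x,Y,z)-y\,\pd{z}\F(x,Y,z)$; when the second derivatives of $\F$ are small the right-hand side is a contraction in $Y$, so the Banach fixed-point theorem yields a unique solution $Y=Y(x,y,z)$, smooth and $C^\infty$ in $(x,y,z)$ by the implicit function theorem under the same smallness. Then \eqref{cx} and \eqref{cz} define $X$ and $Z$ explicitly, and the resulting $\vf=(X,Y,Z)$ is a compactly supported, near-identity perturbation of $\id$, hence a diffeomorphism. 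Finally a direct computation gives $dZ-Y\,dX=(1-\pd{z}\F)(dz-y\,dx)$, so $\vf^{*}\a=g\,\a$ with $g=1-\pd{z}\F\neq0$; thus $\vf$ is a contactomorphism having $\F$ as generating function, and it is the only one since each step above is forced.

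\emph{Main obstacle.} The delicate point is the integrability claim in part (i): that the three prescribed partials really are the gradient of a single scalar. In the symplectic case the analogous one-form is closed by inspection, but here the coupling term $-y\,\pd{z}\F$ in \eqref{cy} and the conformal factor $g$ in \eqref{cz} must be handled simultaneously, so one has to translate $\vf^{*}\a=g\,\a$ faithfully into the mixed-partial symmetries $\pd{x}(X-x)=\pd{Y}(y g-Y)$ and the two companions involving $g$. Carrying $g$ correctly through the change of variables $\Psi$ — and confirming that it coincides with $1-\pd{z}\F$ — is the crux; the remaining estimates are the quantitative near-identity bookkeeping guarded by the $1/2$ threshold.
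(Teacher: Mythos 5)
First, note that the paper does not prove this proposition at all: it is quoted verbatim from Bhupal \cite[prop.~6.1]{Bhupal-2001} and used as a black box (its only echo in the text is the explicit formula $\F^{s,t}(x,y(t,q),z)=(x(t,q)-x)y(t,q)-(z(t,q)-z)$ appearing in the proof of Lemma~\ref{gen=act}). So your proposal can only be judged on its own merits, and on those merits part~(i) is sound but takes a detour: you do not need to integrate a one-form and check closedness, because \eqref{cz} already \emph{defines} $\F$ pointwise. Once $\Psi(x,y,z)=(x,Y(x,y,z),z)$ is seen to be a global diffeomorphism (your near-identity argument is fine), set $\F(x,Y,z):=(X-x)\cdot Y-(Z-z)$ in the mixed chart; differentiating this and substituting $dZ-Y\,dX=g(dz-y\,dx)$ gives $d\F=(gy-Y)dx+(X-x)dY+(1-g)dz$ directly, which is \eqref{cx}, \eqref{cy} and $1-\pd{z}\F=g\neq0$ in one stroke. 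This is exactly your one-form $\omega$, but exhibited as exact rather than argued closed, and it makes uniqueness trivial since \eqref{cz} pins down the value of $\F$ at every point of the (surjective) mixed chart. Your aside that $\F$ ``inherits compact support'' is out of place in (i): no compact support is assumed there.

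The genuine gap is in part~(ii). You claim that for fixed $(x,y,z)$ the map $Y\mapsto y-\pd{x}\F(x,Y,z)-y\,\pd{z}\F(x,Y,z)$ is a contraction once the second derivatives of $\F$ are small. Its derivative in $Y$ is $-\pd{Y}\pd{x}\F-y\,\pd{Y}\pd{z}\F$, and the factor $y$ ranges over all of $\R^k$; no uniform smallness of $D^2\F$ makes $|y|\,\norm{\pd{Y}\pd{z}\F}<1$ for all $y$. This term is precisely the new contact feature absent from the symplectic case, and it is where the hypothesis $\F\in C_c^{\infty}$ must enter the argument rather than merely be quoted at the end. The repair: if $\F$ is supported in $\set{|Y|\le a}$, then any solution $Y$ with $|Y|>a$ forces $Y=y$ (all derivatives of $\F$ vanish there), while a solution with $|Y|\le a$ forces $|y|(1-\norm{\pd{z}\F}_\infty)\le a+\norm{\pd{x}\F}_\infty$, i.e.\ $|y|$ bounded by a constant depending only on $\F$; on that bounded range of $y$ the smallness of $D^2\F$ \emph{relative to that bound} does give a contraction, and the two regimes patch together. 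Without this case analysis the existence and uniqueness of $Y(x,y,z)$, and hence of $\vf$, is not established. Your closing computation $dZ-Y\,dX=(1-\pd{z}\F)(dz-y\,dx)$ is correct and does show the resulting map is a contactomorphism.
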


\section{Method of Characteristics}
\label{subsec:ec_car}
For $H\in C^2([0,T]\times J^1\R^k)$ let $X_{H}$ be the
associated time-dependent contact vector field given by
(H1)-(H3), and  $ t\mapsto\vf^t(q)$
be the integral curve with $\vf^0(q)=q$. 
One calls $\vf^t$ the {\em contact isotopy}  defined by $H$. We define
$\vf^{s,t}=\vf^t\circ(\vf^s)^{-1}$.
Suppose that $H$ has compact support cotained in the set $$\set{(x,y,z)\in\jet{k}:|y|\le a}$$
and let $c_H=\sup\set{|DH_t(x,y,z)|, |D^2H_t(x,y,z)|}$, then  
$\max\limits_t\Lip{X_{H}}\le (2+a)c_H$. 
\begin{lem}[Cf. \cite{W1}, lemmata 2.5,2.6]
 \label{gen=act}
 If $\del_H=\log 2/\left( (2+a)c_H \right)$, for $0<t-s<\del_H$ there is a
generating function $\F^{s,t}: \jet{k} \to \R$ for $\vf^{s,t}$. Let
$q=(x,y,z)$, $r=(X,Y,Z)$ and for $s\le\t\le t$, define 
$\vf^{s,\t}(q)=(x(\t,q),y(\t,q),z(\t,q))$, $\vf^{t,\tau}(r)=( \bx(\t,r),\by(\t,r),\bz(\t,r))$. Then 
\begin{align}  \label{eq:1}
  \F^{s,t}(x,y(t, q), z)&=\int_s^t (\dot{x}(\t, q)(y(t, q)-y(\t, q))+ H(\t,\vf^{s,\t}(q)) )d\t,\\
\label{Ft=H}
 \p_t \F^{s,t}(x,y(t, q), z)   &= H(t,\vf^{s,t}(q) ),\\
\label{Fs=-H}
\p_s\F^{s,t}(\bx(s,r),Y, \bz(s,r))  &= H(t, \vf^{t,s}(r))(\p_{z}\F^{s,t}(\bx(s,r),Y, \bz(s,r))-1 )
\end{align}
\end{lem}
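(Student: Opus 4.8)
The plan is to dispatch the three assertions in sequence: first the existence of $\F^{s,t}$, then the closed formula \eqref{eq:1}, and finally the two endpoint identities \eqref{Ft=H} and \eqref{Fs=-H}, both of which I would read off by differentiating \eqref{eq:1}. For existence, write $L=(2+a)c_H$, so that $\max_t\Lip{X_H}\le L$. Applying Gronwall's inequality to the variational equation of the contact flow yields a uniform bound of the shape $\norm{d\vf^{s,t}(p)-\mathds1}\le e^{L(t-s)}-1$. The threshold $\del_H=\log 2/((2+a)c_H)$ is chosen exactly so that, for $0<t-s<\del_H$, this $C^1$-distance of $\vf^{s,t}$ from the identity stays below the level required in Proposition \ref{exgf}(i); that proposition then produces the unique generating function $\F^{s,t}$ of $\vf^{s,t}$ in the sense of Definition \ref{fg}. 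The same near-identity bound makes the block $\p_y y(t,q)$ invertible, so for fixed source position and height the map $y\mapsto Y=y(t,q)$ is a local diffeomorphism and $\F^{s,t}$ is legitimately a function of $(x,Y,z)$.

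Next I would derive \eqref{eq:1} directly from the defining relations of Definition \ref{fg}. Evaluating \eqref{cz} at a source $q=(x,y,z)$ with target $\vf^{s,t}(q)=(X,Y,Z)$ gives $\F^{s,t}(x,Y,z)=(X-x)Y-(Z-z)$, so it suffices to compute the two differences along the characteristic. Combining \eqref{bh:1.3a} with \eqref{bh:1.3c} gives the contact-action identity $\dot z(\t)=y(\t)\dot x(\t)-H(\t,\vf^{s,\t}(q))$, whence $Z-z=\int_s^t(y(\t)\dot x(\t)-H(\t,\vf^{s,\t}(q)))\,d\t$, while $(X-x)Y=Y\int_s^t\dot x(\t)\,d\t$. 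Subtracting, the terms reorganize into the integrand $\dot x(\t)(Y-y(\t))+H(\t,\vf^{s,\t}(q))$ with $Y=y(t,q)$, which is precisely \eqref{eq:1}.

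With \eqref{eq:1} in hand the derivative formulas come from differentiating in the endpoints. For \eqref{Ft=H}, differentiating $G(t):=\F^{s,t}(x,y(t,q),z)$ totally in $t$ produces the boundary term $H(t,\vf^{s,t}(q))$ (the factor $y(t,q)-y(t,q)$ kills the rest of the integrand at $\t=t$) plus an interior term $(X-x)\dot y(t,q)$ coming from the moving second argument; on the other hand the chain rule writes $G'(t)=\p_t\F^{s,t}+\p_Y\F^{s,t}\cdot\dot y(t,q)$, and \eqref{cx} identifies $\p_Y\F^{s,t}=X-x$, so the interior terms cancel and $\p_t\F^{s,t}=H(t,\vf^{s,t}(q))$. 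For \eqref{Fs=-H} I would parametrize instead by the target $r=(X,Y,Z)$, set $q=\vf^{t,s}(r)$, and differentiate in the lower limit $s$; the boundary contribution at $\t=s$, reorganized through the contact-action identity and through \eqref{cy}--\eqref{cz}, yields $H(t,\vf^{t,s}(r))$ multiplied by the factor $\p_{z}\F^{s,t}-1$.

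The hard part will be this last identity. In the symplectic setting treated by Wei there is no $z$-variable and the flow preserves the symplectic form, so no conformal factor appears; here the extra terms $-y\pd{z}H$ in \eqref{bh:1.3b} and the $-H$ in \eqref{bh:1.3c} make $\vf^{s,t}$ only a conformal contact transformation, $(\vf^{s,t})^{*}\a=g\,\a$, and it is exactly this factor $g$, expressed through $1-\p_{z}\F^{s,t}$ (the quantity Definition \ref{fg} forbids to vanish), that surfaces on the right-hand side of \eqref{Fs=-H}. The delicate bookkeeping is to separate the genuine partial $\p_s\F^{s,t}$ from the contribution of the moving base point $(\bx(s,r),\bz(s,r))$ and to check that the conformal factor emerges with the correct sign; once this is done, the remaining manipulations are routine uses of the fundamental theorem of calculus and the characteristic equations \eqref{bh:1.3a}--\eqref{bh:1.3c}.
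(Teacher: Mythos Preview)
Your proposal is correct and mirrors the paper's proof: existence from Proposition \ref{exgf} via the Gronwall bound on $\norm{\mathds1-d\vf^{s,t}}$, the integral formula \eqref{eq:1} from \eqref{cz} together with the contact-action identity $\dot z=y\dot x-H$, and both endpoint derivatives by comparing total differentiation of the integral with the chain-rule expansion (using \eqref{cx} for \eqref{Ft=H}, and \eqref{cy} plus the equation for $\dot{\bz}$ for \eqref{Fs=-H}). The only step the paper makes more explicit than your sketch is that for \eqref{Fs=-H} it first rewrites the integral in terms of the backward flow, obtaining the companion formula $\F^{s,t}(\bx(s,r),Y,\bz(s,r))=\int_s^t\bigl(\dot{\bx}(\t,r)(Y-\by(\t,r))+H(\t,\vf^{t,\t}(r))\bigr)\,d\t$, before differentiating in $s$; your phrase ``parametrize instead by the target'' is exactly this reparametrization, so the arguments coincide.
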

\begin{proof}
From the general theory of differential equations, 
$\norm{\mathds1-d\vf^{s,t}(p)}<1$ for  $0<t-s<\del_H$ and $p\in\jet{k}$.
By Proposition \ref{exgf}, $\vf^{s,t}$ has a generating
function  $\F^{s,t}$ so that
\begin{equation}
\begin{split}
 x(t,q)-x&=\p_{y}\F^{s,t}(x,y(t,q), z)\\
 y(t,q)-y&=-\p_{x}\F^{s,t}(x,y(t,q), z)-y\p_{z}\F^{s,t}(x,y(t,q), z)\\
 z(t,q)-z&=y(t,q)(x(t,q)-x)-\F^{s,t}(x,y(t,q), z).
 \end{split}
\end{equation}
Then
\begin{align*}
 \F^{s,t}(x,y(t, q), z) &= (x(t, q)-x)y(t,q)-( z(t, q)-z )\\
  &= \int_s^t \dot{x}(\t, q)y(t, q)-\dot{z}(\t, q) ) d\t\\
 &=\int_s^t (\dot{x}(\t, q)(y(t, q)-y(\t, q))+ \dot{x}(\t, q)y(\t, q) -\dot{z}(\t, q) ) d\t\\
  &=\int_s^t (\dot{x}(\t, q)(y(t, q)-y(\t, q))+ H(\t, \vf^{s,\t}(q)) ) d\t
\end{align*}
Differentiating respect to $t$ 
\begin{align*}
 \frac d{dt}(  \F^{s,t}(x,y(t, q), z) )
  &= \dot{y}(t, q)(x(t, q)-x) + H( t,\vf^{s,t}(q) )\\
&=\dot{y}(t, q)\p_{y} \F^{s,t}(x,y(t, q), z) + H( t,\vf^{s,t}(q)) .
\end{align*}
On the other hand
\[\frac d{dt}(\F^{s,t}(x,y(t, q), z) )=
 \p_{y} \F^{s,t}(x,y(t, q), z) \dot{y}(t, q)
 +\p_t \F^{s,t}(x,y(t, q), z).
 \]
Comparing these expressions we obtain \eqref{Ft=H}.
Similarly we have 
\[\F^{s,t}(\bx(s,r),Y, \bz(s,r)) =\int_s^t (\dot{\bx}(\t,r)(Y-\by(\t,r))+  H(\t, \vf^{t,\t}(r)) ) d\t \]
Differentiating respect to $s$ 
\[\frac d{ds}(\F^{s,t}(x,y(t, q), z) )= (\by(s, r)-Y)\dot{\bx}(s,r) -H( t,\vf^{t,s}(r) ) \]
 On the other hand
 \begin{multline*}
   \frac d{ds}(\F^{s,t}(\bx(s,r),Y, \bz(s,r))=
 \p_x\F^{s,t}(\bx(s,r),Y, \bz(s,r))\dot{\bx}(t, r)\\
+\p_z\F^{s,t}(\bx(s,r),Y, \bz(s,r))\dot{\bz}(t, r)+\p_s\F^{s,t}(\bx(s,r),Y, \bz(s,r))\\
=(y(s, r)-Y)\dot{\bx}(t, r)-\p_z\F^{s,t}(\bx(s,r),Y, \bz(s,r)) H( t,\vf^{t,s}(r) )\\
+\p_s\F^{s,t}(\bx(s,r),Y, \bz(s,r))
\end{multline*}
Comparing these expressions we obtain \eqref{Fs=-H}.
\end{proof}

\section{Generating families for Legendrian submanifols}
\label{sec:gfls}
Let $H\in C_c^2([0,T]\times\jet{k})$ and $\vf^t$ be the contact isotopy  defined by $H$. 
Following \cite{Bh} we will 
construct a \emph{generating families} for Legendrian submanifols.
 
Let $s=t_0<t_1<\cdots<t_N=t$ be a partition such that
$\abs{t_{i+1}-t_i}<\del_H$ so that
$$
\vf^{s,t}=\vf^{t_{N-1}, t_N}\comp\cdots\comp\vf^{t_0,t_1}.
$$
\begin{prop}[Cf. \cite{W1}, corollary 2.8]
 \label{main:thm:01}
Let $0\le s=t_0<t_1<\cdots<t_N=t\le T$ be a partition such that
$\abs{t_{i+1}-t_i}<\del_H$ and $\F^{t_i,t_{i+1}}:\R^{2k+1}\to \R$
be the generating function of $\vf^{t_i,t_{i+1}}$ given by
Proposition \ref{exgf}. For $v\in C^2(\R^k)$ we have
\begin{enumerate}[(a)]
\item One can define a generating family $S^{s,t}:\R^k\times\R^{2kN}\to\R$  of
  $\vf^{s, t}(j^1v)$  by
\begin{equation}
 \label{gfqi}
  S^{s,t}(x;\xi)=
v(x_0)+\sum_{j=1}^N y_j(x_j-x_{j-1})-\F^{t_{j-1},t_j}( x_{j-1}, y_j, z_{j-1})
\end{equation} 
where $x=x_N$, $\xi= (x_0,\ldots,x_{N-1},y_1,\ldots,y_N),$ and $z_0,\ldots,z_{N-1}$ 
are defined inductively by 
\begin{align}\notag
  z_0&=v(x_0)\\ 
 \label{c-0}
  z_j&=z_{j-1}+( x_j-x_{j-1} )y_j-\F_{j-1}( x_{j-1}, y_j, z_{j-1})\quad 
0< j\le N .
 \end{align}
Notice that $z_j$ depends only on 
$(x_0,..,x_j, y_1,\ldots,y_j)$.

\item One can define a $C^2$ function $S^{s,t}:[s,t]\times\R^k\times\R^{2kN}\to\R$, 
such that  each $S^{s,t}(\t,\cdot)$ is a generating family of 
$\vf_{H}^{s,\t}(j^1v)$, as follows: 
let $\t_j=s+(\t-s)\dfrac{t_j-s}{t-s}$ and
\begin{equation}
 \label{gfqi_t}
  S^{s,t}(\t,x;\xi)
  =v(x_0)+\sum_{j=1}^Ny_j(x_j-x_{j-1})-\F^{\t_{j-1},\t_j}(x_{j-1}, y_j, \bz_{j-1}).
\end{equation}
where $x=x_N$, $\xi= (x_0,\ldots,x_{N-1},y_1,\ldots,y_N),$ and 
$\bz_0,\ldots,\bz_{N-1}$ are defined inductively as before

\item For each critical point $\xi$ of $S^{s,t}(\t, x;\cdot)$ we have
  \begin{equation}\label{eq:action}
S^{s,t}(\t,x,\xi)=\int_{s}^{\t} \Big(\dot{x}(\s,j^1v(x_0))y(\s, j^1v(x_0))- 
H\big(\s,\vf^{s,\s}(j^1v(x_0))\big)\Big) d\s
  \end{equation}
where $\vf^{s,\s}(p)=(x(\s,p),y(\s,p),z(\s,p))$, $x(\t,j^1v(x_0))=x$.
\end{enumerate}
\end{prop}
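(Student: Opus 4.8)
The plan is to establish (a) by induction on the number of steps $N$, isolating the inductive step as a \emph{composition lemma}: if a Legendrian $L\subset\jet{k}$ is generated by a family $G(X;\eta)$ and $\vf$ is a contactomorphism with generating function $\F$ in the sense of Definition \ref{fg}, then $\vf(L)$ is generated by
\[
\tilde{G}(x;\eta,X,Y)=G(X;\eta)+Y(x-X)-\F\big(X,Y,G(X;\eta)\big),
\]
where the old base variable $X$ and a new momentum $Y$ become the two new fibre variables. Granting this, one starts from the trivial remark that $j^1v$ is generated by the zero-parameter family $v(x)$, and applies the lemma successively with $\vf=\vf^{t_{j-1},t_j}$ and $\F=\F^{t_{j-1},t_j}$ (these exist by Lemma \ref{gen=act} since $t_j-t_{j-1}<\del_H$). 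The bookkeeping that matches this to \eqref{gfqi} is a telescoping identity: summing \eqref{c-0} shows that the partial sum of \eqref{gfqi} through index $j$ equals $z_j$, so the family produced after $j$ steps is exactly $G_j(x_j;\cdot)=z_j$, and the third argument $z_{j-1}$ fed to $\F^{t_{j-1},t_j}$ is precisely the value of the previously built family. Iterating $N$ times reproduces \eqref{gfqi} verbatim and shows, in passing, that $S^{s,t}=z_N$ at every point.

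The heart of the matter is the composition lemma, which I would prove by a direct critical-point computation. Differentiating $\tilde{G}$ and applying the chain rule to the slot $z=G(X;\eta)$ gives
\[
\pd{\eta}\tilde{G}=(1-\pd{z}\F)\,\pd{\eta}G,\qquad \pd{Y}\tilde{G}=(x-X)-\pd{Y}\F,
\]
\[
\pd{X}\tilde{G}=(1-\pd{z}\F)\,\pd{X}G-Y-\pd{x}\F,
\]
with every derivative of $\F$ evaluated at $(X,Y,G(X;\eta))$. The nonvanishing of the scalar $1-\pd{z}\F$, which is part of Definition \ref{fg}, enters through the first identity: $\pd{\eta}\tilde{G}=0$ forces $\pd{\eta}G=0$, so the critical point lies over $L$, where $\pd{X}G$ is the momentum of the corresponding point of $L$. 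Feeding this into $\pd{X}\tilde{G}=0$ and rearranging yields exactly \eqref{cy}, while $\pd{Y}\tilde{G}=0$ is \eqref{cx}; the value $\tilde{G}=G+Y(x-X)-\F$ is the right-hand side of \eqref{cz}, and $\pd{x}\tilde{G}=Y$. By Definition \ref{fg} the generated point $(x,\pd{x}\tilde{G},\tilde{G})=(x,Y,\tilde{G})$ is therefore the image under $\vf$ of the point of $L$ with data $(X,\pd{X}G,G)$, so the critical image of $\tilde{G}$ is $\vf(L)$. The reverse inclusion follows by running the same equations backwards, and transversality of the fibre derivative to $0$ is inherited from that of $G$ together with the invertibility of $1-\pd{z}\F$ and the nondegeneracy built into $\F$ by Proposition \ref{exgf}, so that $\tilde{G}$ is a genuine generating family.

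For (b) I would repeat the argument on the rescaled partition $s=\t_0<\cdots<\t_N=\t$ of $[s,\t]$, observing that $\t_j-\t_{j-1}\le t_j-t_{j-1}<\del_H$ so each $\F^{\t_{j-1},\t_j}$ still exists; for each fixed $\t$ the computation above shows $S^{s,t}(\t,\cdot)$ generates $\vf^{s,\t}(j^1v)$, while joint $C^2$ regularity in $(\t,x,\xi)$ follows from the $C^2$ dependence of the flow $\vf^{s,\t}$—and hence of the generating functions $\F^{\t_{j-1},\t_j}$ produced by the implicit-function construction of Proposition \ref{exgf}—on their time endpoints, using $H\in C_c^2$. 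For (c), the telescoping identity gives $S^{s,t}(\t,x,\xi)=\bz_N$, and at a critical point the relations \eqref{cx}--\eqref{cz} identify $\bz_N$ with the $z$-coordinate of $\vf^{s,\t}(j^1v(x_0))$; since \eqref{bh:1.3a} and \eqref{bh:1.3c} combine to $\dot{z}=\dot{x}\,y-H$ along the characteristic, integrating yields
\[
\bz_N=v(x_0)+\int_s^{\t}\big(\dot{x}(\s)\,y(\s)-H(\s,\vf^{s,\s}(j^1v(x_0)))\big)\,d\s,
\]
which is \eqref{eq:action} once the initial value $z(s)=v(x_0)$ is taken into account.

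The step I expect to be the main obstacle is the composition lemma, precisely because $\F$ is a \emph{mixed} generating function whose third argument must be fed the running value $G(X;\eta)$ rather than the genuine contact coordinate; tracking this substitution through the chain rule and invoking $1-\pd{z}\F\neq0$ at exactly the place above is what makes the otherwise formal telescoping legitimate. The second delicate point is checking that the fibre derivative remains transversal to $0$ after each composition, so that $S^{s,t}$ is a bona fide generating family and not merely a function whose critical values trace $\vf^{s,t}(j^1v)$; here the nondegeneracy furnished by Proposition \ref{exgf} (coming from the contraction hypothesis $\norm{\mathds{1}-d\vf}<1/2$) does the real work.
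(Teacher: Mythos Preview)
Your argument is correct and rests on the same computations as the paper's, but packaged differently. You isolate a one-step \emph{composition lemma} and iterate it; the paper instead computes $\partial_{x_i}S^{s,t}$ and $\partial_{y_i}S^{s,t}$ directly for the full $N$-step family, obtaining explicit formulas carrying products of factors $(1-\partial_z\F_{N-1})\cdots(1-\partial_z\F_{i+1})$ that show in one stroke how the nonvanishing of $1-\partial_z\F$ propagates through every layer. Both routes arrive at the same identification of the system $\partial_\xi S^{s,t}=0$ with the chain $\vf^{t_i,t_{i+1}}(x_i,y_i,z_i)=(x_{i+1},y_{i+1},z_{i+1})$, and both use the telescoping $S^{s,t}=z_N$. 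Your inductive presentation is conceptually cleaner; the paper's explicit derivative formulas, on the other hand, are reused verbatim in the Lipschitz extension (Proposition~\ref{genleg}), so there is a small practical payoff to writing them out once. For (c), the paper invokes the integral representation of each $\F^{\t_{j-1},\t_j}$ from Lemma~\ref{gen=act} and sums, whereas you integrate $\dot z=\dot x\,y-H$ along the characteristic directly; the two are equivalent, and your observation that the critical value equals $v(x_0)+\int_s^{\t}(\dot x\,y-H)\,d\s$ is correct---the additive constant $v(x_0)$ is missing from the displayed formula \eqref{eq:action} as stated.
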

\begin{proof}
We have that the generating function $\F_i:=\F^{t_i,t_{i+1}}:\R^{2k+1}\to \R$
satisfies $1-\pd{z}\F_i\ne 0$ and that conditions 
\begin{equation}
 \label{eq:fg}
 \begin{cases}
  x_{i+1}-x_i=\pd{y}\F_i( x_i, y_{i+1}, z_i ) \\
  y_{i+1}-y_i=-\pd{x}\F_i( x_i, y_{i+1}, z_i )-y_i\pd{z}\F_i(x_i,y_{i+1},z_i )\\
  z_{i+1}-z_i=( x_{i+1}-x_i )y_{i+1}-
\F_i( x_i, y_{i+1}, z_i )
 \end{cases}
\end{equation}
hold if and only if
$\vfd^{t_i,t_{i+1}}(x_i,y_i,z_i)=(x_{i+1},y_{i+1},z_{i+1} ).$ We have
\begin{equation}
\label{yN} 
\p_{x}S^{s,t}(x;\xi)=y_N.
\end{equation}
Let  $i=0,\ldots,N-1.$  For $i<j-1$ we have
\begin{align*}
  \pd{x_i}z_j&= \pd{x_i}( z_{j-1}+(x_j-x_{j-1})y_j-\F_{j-1}(x_{j-1},y_j,z_{j-1}) )\\
  &= \pd{x_i}z_{j-1}-\pd{z_{j-1}}\F_{j-1}\pd{x_i}z_{j-1}= ( 1-\pd{z_{j-1}}\F_{j-1} )\pd{x_i}z_{j-1}
 \end{align*}
and since $\pd{x_i}z_i=y_i$ we get
$$
\pd{x_i}z_{i+1}=y_i-y_{i+1}-\pd{x_i}\F_i-y_i\pd{z_i}\F_i.
$$
As $S^{s,t}(x,\xi)=z_N,$
for $0< i <N$ we obtain 
\begin{equation}\begin{split}\label{fg:pcx}
\pd{x_i}S^{s,t}(x,\xi)=&(1-\pd{z_{N-1}}\F_{N-1})\cdots(1-\pd{z_{i+1}}\F_{i+1})\\
&\times( y_i-y_{i+1}-\pd{x_i}\F_i-y_i\pd{z_i}\F_i),
\end{split}
\end{equation}

\begin{equation}\begin{split}
\label{fg:pb}
\p_{x_0}S^{s,t}(x;\xi)=&(1-\pd{z_{N-1}}\F_{N-1})\cdots(1-\pd{z_1}\F_{1})\\
&\times(dv(x_0)-y_1-\p_{x_0}\F_0-\p_{z_0}\F_0dv(x_0))
\end{split}
\end{equation}

For $i<j\leq N$
\begin{align*}
  \pd{y_i}z_j&= \pd{y_i}( z_{j-1}+(x_j-x_{j-1})y_j-\F_{j-1}(x_{j-1},y_j,z_{j-1}) )\\
  &= \pd{y_i}z_{j-1}-\pd{z_{j-1}}\F_{j-1}\pd{y_i}z_{j-1}= ( 1-\pd{z_{j-1}}\F_{j-1} )\pd{y_i}z_{j-1},\\
 \pd{y_i} z_i&= \pd{y_i}( z_{i-1} + (x_i-x_{i-1})y_i - \F_{i-1}(x_{i-1}, y_i, z_{i-1}) )\\
 &= x_i-x_{i-1}-\pd{y_i}\F_{i-1},
\end{align*}
so we get
\begin{equation}\label{fg:pcy}
 \pd{y_i}S^{s,t}(x,\xi)=( 1-\pd{z_{N-1}}\F_{N-1} )\cdots (1-\pd{z_{i}}\F_{i} )
( x_i-x_{i-1}-\pd{y_i}\F_{i-1} ).
\end{equation}

From \eqref{gfqi}, \eqref{c-0}, \eqref{eq:fg} and equations
\eqref{fg:pcx}, \eqref{fg:pb} \eqref{fg:pcy} we have that the system
$\p_{\xi}S(x;\xi)=0$, \eqref{yN} is equivalent to 
\begin{align}\notag
\vf^{s,t_1}(x_0, dv(x_0),v(x_0))&=(x_1,y_1,z_1),  \\ \label{enmedio}
\vf^{t_i,t_{i+1}} (x_i, y_i, z_i)&=( x_{i+1}, y_{i+1}, z_{i+1}),\quad
i=1,\ldots, N-2,\\\label{ultimo}
\vf^{t_{N-1},t} (x_{N-1},y_{N-1},z_{N-1})&=(x,\pd{x}S^{s,t}(x;\xi), S^{s,t}(x; \xi)). 
\end{align}
Letting $q_i=(x_i,y_i,\bar z_i)$ we have from Lemma \ref{gen=act}
\[\F^{\t_i,\t_{i+1}}(x_i,y_{i+1},\bar z_i)=y_i(x_{i+1}-x_i)-
\int_{\t_i}^{\t_{i+1}}\dot{x}(\s,q_i)y(\s, q_i)- H(\s,\vf^{s,\s}(q_i)) ) d\s\]
from which item (c) follows. 
\end{proof}

Defining
\begin{align}\label{eq:Q}
Q(\xi)&=-y_Nx_{N-1}+\sum_{i=1}^{N-1}y_i(x_i-x_{i-1}),
\\\label{eq:W}
W^{s,t}(\t,x, \xi)&=v(x_0)+x\cdot y_N-\sum_{j=1}^N \F^{t_{j-1},t_j}( x_{j-1},y_j,\bz_{j-1} ),
\end{align}
we see that for $v\in C^{2,Lip}(\R^k)$, $S^{s,t}(\t,x,\xi)$ is a $\gfqi$

\section{Generalized generating families}

We consider the Cauchy problem \eqref{HJ} with 
$H\in C_c^2([0,T]\times J^1\R^k)$ and $v\in C^{Lip}.$

\begin{prop}[Cf. \cite{W1}, prop. 2.18]
\label{genleg}
Suppose that in the Cauchy problem \eqref{HJ} $v$ is locally Lipschitz
and let $\p v=\set{(x,y,v(x)): y \in \p v(x)}.$
The generating family $S^{s,t}$ given by \eqref{gfqi_t}  generated 
$L^\t=\vf_{H}^{s,\t}(\p v)$ in the sense that
\begin{equation}
 \label{leg}
 L^\t=\set{( x, \p_{x}S^{s,\t}(\t,x,\xi), S^{s,t}(\tau,x, \xi)  ): 0\in \p_{\xi}S^{s,t}(\t,x,\xi)},
\end{equation}
where $\p$ denotes Clarke's generalized derivative and $$\p v=\set{\left( x,y,z \right)\mid y\in \p v(x), z=v(x)}.$$
\end{prop}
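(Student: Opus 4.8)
The plan is to reduce the statement to the $C^2$ computation already carried out in Proposition \ref{main:thm:01}, isolating the single place where the regularity of $v$ enters. Inspecting \eqref{gfqi_t} together with the recursion \eqref{c-0} for $\bz_0,\ldots,\bz_{N-1}$, one sees that $v$ appears only through the scalar $w:=v(x_0)$: explicitly in the leading term and implicitly via $\bz_0=v(x_0)$, which then propagates through the smooth recursion. I would therefore write $S^{s,t}(\t,x;\xi)=\til{S}(\t,x,\xi;v(x_0))$, where $\til{S}(\t,x,\xi;w)$ is the $C^2$ function obtained by treating $w$ as an independent parameter slot (the $\F^{\t_{j-1},\t_j}$ are $C^2$ because $H\in C_c^2$, and each $\bz_j$ is smooth in $(w,x_i,y_j)$). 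In this way the only nonsmoothness of $S^{s,t}$ in $\xi$ is the composition of the smooth $\til S$ with the Lipschitz scalar $x_0\mapsto v(x_0)$.

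Next I would compute the Clarke generalized gradient $\p_\xi S^{s,t}$. Writing $\xi=(x_0,\xi')$ with $\xi'=(x_1,\ldots,x_{N-1},y_1,\ldots,y_N)$, the variables $\xi'$ enter $\til S$ smoothly with $w$ frozen, so the $\xi'$-block of the generalized gradient is simply the ordinary partial gradient $\partial_{\xi'}\til S$, exactly as in the smooth case. For the $x_0$-block, $S^{s,t}$ is the composition of the $C^1$ function $\til S$ with the Lipschitz map $x_0\mapsto(x_0,v(x_0))$; since the outer map is continuously (indeed strictly) differentiable, the Clarke chain rule holds with equality and yields the full generalized gradient
$$\p_\xi S^{s,t}(\t,x,\xi)=\set{\big(\partial_{x_0}\til S+p\,\partial_w\til S,\ \partial_{\xi'}\til S\big) : p\in\p v(x_0)},$$
where $\p v(x_0)$ denotes Clarke's subdifferential. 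The $C^1$ regularity of $\til S$, and the fact that the nonsmooth input is a single scalar fed in through a smooth map, are precisely what upgrade the generic inclusion of the chain rule to this equality.

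With this description in hand, the condition $0\in\p_\xi S^{s,t}(\t,x,\cdot)$ splits into the smooth system $\partial_{\xi'}\til S=0$ --- identical to the one analysed in Proposition \ref{main:thm:01}, and hence encoding the flow relations \eqref{enmedio}--\eqref{ultimo} --- together with the requirement that there exist $p\in\p v(x_0)$ with $\partial_{x_0}\til S+p\,\partial_w\til S=0$. Since $\partial_{x_0}\til S+p\,\partial_w\til S$ is exactly the right-hand side of \eqref{fg:pb} with $dv(x_0)$ replaced by $p$, and the prefactor $(1-\p_{z_{N-1}}\F_{N-1})\cdots(1-\p_{z_1}\F_1)$ is nonzero by the generating-function property $1-\p_z\F\ne0$, this last requirement reduces to $y_1-p=-\p_{x_0}\F_0-p\,\p_{z_0}\F_0$, i.e. to the contact equation \eqref{cy} characterizing $\vf^{s,t_1}(x_0,p,v(x_0))=(x_1,y_1,z_1)$. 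Thus a critical point $\xi$ corresponds precisely to a characteristic issued from a point $(x_0,p,v(x_0))\in\p v$ with $p\in\p v(x_0)$, and evolving it to time $\t$ produces $(x,\p_xS^{s,t},S^{s,t})\in\vf_H^{s,\t}(\p v)=L^\t$; conversely, every point of $L^\t$ arises in this way by reversing the construction, which gives \eqref{leg}.

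The step I expect to be the main obstacle is the passage from inclusion to equality in the Clarke chain rule: I must ensure that $0\in\p_\xi S^{s,t}$ is genuinely \emph{equivalent} to the existence of a single subgradient $p\in\p v(x_0)$ solving the smooth equations, not merely implied by it, for otherwise spurious critical points could be created or genuine ones lost. This is exactly where the $C^1$ (strict differentiability) of $\til S$ and the scalar, smoothly-composed nature of the nonsmooth input are indispensable. A secondary point that must be pinned down beforehand is the well-definedness of $\p v$ and of $L^\t=\vf_H^{s,\t}(\p v)$ as the correct Lipschitz replacement for $\vf^{s,\t}(j^1v)$, which I would import from the Clarke-calculus material.
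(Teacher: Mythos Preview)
Your approach is correct and follows the same route as the paper: both reduce the condition $0\in\p_\xi S^{s,t}$ to the system (c1)--(c3) with $dv(x_0)$ replaced by some $y_0\in\p v(x_0)$, then read off the flow relations as in Proposition \ref{main:thm:01}. The paper's proof is much terser and simply asserts this equivalence without comment; your explicit justification of equality (rather than mere inclusion) in the Clarke chain rule, via the strict differentiability of the smooth envelope $\til{S}$ and the fact that the nonsmoothness enters through a single scalar $v(x_0)$, supplies exactly the detail the paper leaves implicit.
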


\begin{proof}
The condition $0\in \p_{\xi}S(x,\xi)$ means that there exists $y_0\in \p v(x_0)$ such that 
\begin{align}\tag{c1}
y_0-y_1&=\p_{x}\F^{s,t_1}(x_0,y_1,v(x_0) )+y_0\p_{z}\F^{s,t_1}(x_0,y_1,v(x_0) )\\\tag{c2}
y_i-y_{i+1}&=\p_{x_i}\F^{t_i,t_{i+1}}(x_i,y_{i+1},z_i)+\p_{z_i}\F^{t_i,t_{i+1}}(x_i,y_{i+1},z_i) y_i,
\quad 0<i<N\\
 x_i-x_{i-1}&=\p_{y_i}\F^{t_{i-1},t_i} (x_{i-1},y_i,z_{i-1}),
\quad 0<i\le N.\tag{c3}
\end{align}
Since $\p_{x}S^{s,t}(x;\xi)=y_N$, we have that $\vf^{s,t_1}(x_0, y_0,v(x_0))=(x_1,y_1,z_1)$,
\eqref{enmedio} and \eqref{ultimo} hold, and using \eqref{gfqi}
give $\vf^{s,t}(x_0,y_0,v(x_0))=(x,\p_{x}S^{s,t}(x;\xi), S^{s,t}(x;\xi)).$
\end{proof}

\begin{prop}[Cf. \cite{W1}, prop. 2.21]
 \label{cut}
Let $H\in C_c^2([0,T]\times \jet{k})$, $v\in C^{Lip}(\R^k).$ 
Write $S^{s,t}:[s,t]\times\R^k\times \R^q\to\R$ given by \eqref{gfqi_t}
as 
$$
S^{s,t}(\t,x,\xi)=W^{s,t}(\t,x,\xi)+Q(\xi),
$$ 
with $Q$, $W^{s,t}$ defined in \eqref{eq:Q}, \eqref{eq:W} 
For each compact subset $K$ of $\R^k,$ the  family of functions 
$\{W^{s,t}(\t,x,\cdot)\}_{\t\in[s,t], x\in K}$ is uniformly Lipschitz.
Moreover for any $\th\in C_c(\R^q, [0,T])$ identically $1$ in a
neighborhhod of the origin with $\norm{D\th}< 1$, 
there exists a constant $a_K>1$ such that for $\t\in[s,t]$, 
\begin{equation}
 \label{gfqi_k}
(x,\xi)\mapsto S^{s,t}_K(\t,x,\xi)=\th\Big(\frac{\xi}{a_K}\Big)W^{s,t}(x,\xi)+Q(\xi)
 \end{equation}
is a $\gfqi$ for
$$L^\t_K=L^\t\cap\pi^{-1}(K)=\set{(x, \p_{x}S^{s,t}_K(\t,x,\xi),S_K^{s,t}(\t,x,\xi)| 
0\in \p_{\xi}S^{s,t}_K(\t,x,\xi))},$$ 
where $\pi:\jet{k}\to \R^k, (x,y,z)\to x.$
\end{prop}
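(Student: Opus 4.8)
The plan is to prove the two assertions in turn: first the uniform Lipschitz bound on $W^{s,t}$, and then the selection of $a_K$, using that bound together with the nondegeneracy of $Q$ and the hypothesis $\norm{D\theta}<1$ to show the cut-off family has no spurious critical points. Since $H\in C_c^2$, each $\vf^{\tau_{j-1},\tau_j}$ has compact support, and by the remark following Definition~\ref{fg} so does its generating function $\F^{\tau_{j-1},\tau_j}$; in particular the first derivatives of the $\F$'s are bounded uniformly in the rescaled times $\tau_j$ as $\tau$ ranges over $[s,t]$. I would first show by induction on $j$ that each $\bz_{j-1}$, defined by the recursion \eqref{c-0}, is Lipschitz in $\xi$ with a constant independent of $\tau$: the base case $\bz_0=v(x_0)$ is Lipschitz because $v\in C^{Lip}$, and the inductive step composes Lipschitz data through the smooth, uniformly Lipschitz maps $\F^{\tau_{j-1},\tau_j}$. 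Feeding this into \eqref{eq:W}, and noting that $x\cdot y_N$ has $\xi$-gradient bounded by $\max_{x\in K}\abs{x}$, shows $\set{W^{s,t}(\tau,x,\cdot)}_{\tau\in[s,t],\,x\in K}$ is uniformly Lipschitz, say with constant $C_K$; the same estimates give the linear growth $\abs{W^{s,t}(\tau,x,\xi)}\le C_K(1+\abs{\xi})$.

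Next I would bound, uniformly in $\tau\in[s,t]$ and $x\in K$, the set of $\xi$ with $0\in\p_\xi S^{s,t}(\tau,x,\xi)$. By Proposition~\ref{genleg} such a $\xi$ encodes a characteristic trajectory $\vf^{s,\tau}(x_0,y_0,v(x_0))$ with $y_0\in\p v(x_0)$ and terminal base point $x_N=x\in K$. Because $v\in C^{Lip}$, the generalized gradient $\p v$ is uniformly bounded, so $y_0$ is bounded; because $H$ has compact support, the contact field $X_H$ vanishes off a bounded set, so $\vf^{s,\tau}$ is a uniformly bounded perturbation of the identity. Hence $\abs{x_0-x_N}$ and $\abs{y_N-y_0}$ are bounded, as are all intermediate $(x_j,y_j,\bz_j)$, producing a radius $M_K$ with every such critical $\xi$ in $\set{\abs{\xi}\le M_K}$.

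A direct check shows $Q$ in \eqref{eq:Q} is nondegenerate, so $\abs{\grad Q(\xi)}\ge c\abs{\xi}$ for some $c>0$. Fix $\rho_0,\rho_1$ with $\theta\equiv1$ on $\set{\abs{\xi}\le\rho_0}$ and $\supp{\theta}\subset\set{\abs{\xi}\le\rho_1}$. Since only $v(x_0)$ is non-smooth, the sum and product rules of Clarke calculus give that every $\eta\in\p_\xi S^{s,t}_K(\tau,x,\xi)$ has the form
\begin{equation*}
\eta=\grad Q(\xi)+\frac1{a_K}\,D\theta(\xi/a_K)\,W^{s,t}(\tau,x,\xi)+\theta(\xi/a_K)\,\zeta,\qquad \zeta\in\p_\xi W^{s,t}(\tau,x,\xi).
\end{equation*}
On $\set{\abs{\xi}\ge\rho_0 a_K}$, using $\norm{D\theta}<1$, the growth bound on $W^{s,t}$, $\abs{\zeta}\le C_K$, and $a_K\ge1$,
\begin{equation*}
\abs{\eta}\ge c\abs{\xi}-\frac{C_K(1+\rho_1 a_K)}{a_K}-C_K\ge c\rho_0 a_K-C_K(2+\rho_1),
\end{equation*}
which is positive once $a_K>C_K(2+\rho_1)/(c\rho_0)$. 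Choosing in addition $a_K>1$ and $\rho_0 a_K>M_K$ forces every critical point of $S^{s,t}(\tau,x,\cdot)$ with $x\in K$ into $\set{\theta(\xi/a_K)=1}$, where $S^{s,t}_K=S^{s,t}$; the two then share critical points and values there, while the display rules out new ones outside. As $S^{s,t}_K=Q$ for $\abs{\xi}\ge\rho_1 a_K$, it is a strict $\gfqi$ in the sense of Definition~\ref{defn:gfqi2}, and by Proposition~\ref{genleg} it generates exactly $L^\tau_K=L^\tau\cap\pi^{-1}(K)$.

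The delicate point is the transition region $\rho_0 a_K\le\abs{\xi}\le\rho_1 a_K$, where one must prevent the cut-off from creating critical points. This succeeds precisely because the linear lower bound $c\abs{\xi}$ on $\abs{\grad Q}$ dominates the perturbation from the cut-off, which stays bounded thanks to the at-most-linear growth of $W^{s,t}$ and the condition $\norm{D\theta}<1$; enlarging $a_K$ then wins. The remaining effort is bookkeeping in Clarke calculus, checking that the sum and product rules hold with equality away from the single non-smooth contribution $v(x_0)$.
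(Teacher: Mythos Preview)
Your argument is correct and follows essentially the same route as the paper: establish a uniform Lipschitz bound $c_K$ for $W^{s,t}(\tau,x,\cdot)$, deduce the linear growth $\abs{W^{s,t}(\tau,x,\xi)}\le b_K+c_K\abs{\xi}$, and then use the nondegeneracy of $Q$ (written as $\norm{B\xi}\ge\norm{B^{-1}}^{-1}\norm{\xi}$ in the paper, as $\abs{\grad Q(\xi)}\ge c\abs{\xi}$ in your version) to dominate the cut-off perturbation once $a_K$ is large. The Clarke-calculus inclusion you write for $\p_\xi S^{s,t}_K$ is exactly the one the paper uses.

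The one organizational difference is that you insert an extra step, invoking Proposition~\ref{genleg} and the compact support of $H$ to locate the critical points of the \emph{uncut} $S^{s,t}$ inside a ball of radius $M_K$, and then impose the additional constraint $\rho_0 a_K>M_K$. The paper avoids this by noting that the very same estimate already shows $0\notin\p_\xi S^{s,t}_K$ for $\norm{\xi}\ge b_K$ and choosing $a_K$ so that $\theta(\xi/a_K)=1$ on $\norm{\xi}\le b_K$; since the same bound with $\theta\equiv1$ also rules out critical points of $S^{s,t}$ there, no separate trajectory argument is needed. Your geometric detour is harmless and arguably more transparent, but it is not required.
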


\begin{proof}
 For a fixed compact $K,$ let 
$c_K=\max\{\Lip{W^{s,t}(\t,x,\cdot)}:\t\in[s,t], x\in K\}$.
Writing $Q(\xi)=\frac 12\inp{B\xi,\xi}$
\[
 \p_{\xi}S^{s,t}_K(x,\xi)\subset\frac1{a_K}D\th\Big(\frac\xi{a_K}\Big)W^{s,t}(\t,x,\xi)
 + \th\Big(\frac\xi{a_K}\Big)\p_{\xi}W^{s,t}(\t,x,\xi)+B\xi .
\]
Defining $b_{K}=\max\{\abs{W(\t,x,0)}:\t\in[s,t], x\in K\}$ we have
$$
\abs{W^{s,t}(\t,x,\xi)} \leq \abs{W^{s,t}(\t,x,0)} + 
\abs{W^{s,t}(\t,x,\xi)-W^{s,t}(\t,x,0)}
\leq b_{K}+c_{K}\norm{\xi}.
$$
Thus, if $a_{K},b_K$ are sufficiently large,
for $\norm{\xi}\geq b_K$  and any $w\in\p_{\xi}W^{s,t}(\t,x,\xi)$ we have
\begin{align*}
   \abs{\frac1{a_K}D\th\Big(\frac{\xi}{a_K} \Big) W^{s,t}(\t,x,\xi)
 + \th \Big(\frac{\xi}{a_K}\Big)w}
 &\leq \dfrac{1}{a_{K}}( b_{K} + c_{K}\norm{\xi}) +c_{K}\\
 &\leq \frac12\norm{B^{-1}}^{-1}\norm{\xi}\\
& < \norm{B\xi}.
 \end{align*}
We can choose $a_K$ sufficiently large so that $\th\Big(\dfrac{\xi}{a_K} \Big)=1$ 
if $\norm{\xi} \leq b_K$. Thus  $S=S_k,$ for $\norm{\xi} \leq b_K,$ 
and $0\notin \p_{\xi}S_k(x,\xi) $ for  $\norm{\xi}\geq b_K$.
Therefore
$$L^\t_K
=\set{(x,\p_{x}S^{s,t}_K(\t,x,\xi),S^{s,t}_K(\t,x,\xi):0\in\p_{\xi}S^{s,t}_K(\t,x,\xi)}.
$$
\end{proof}

\part{Generalized Solutions of the Cauchy  problem} \label{gen-sol}

In this last part, we will prove our main result, Theorem \ref{thm:main}. First, we will define \emph{minimax solutions} and give some of their basic properties. For this goal, we need a \emph{minimax principle,} which basic definitions and results are given in \cite{STR}. Most of analytic results that we have obtain in this section are generalizations for those in \cite{W1}. Next, using those results, we finally give a demostration of Theorem \ref{thm:main}. Constructions obtained in previous section allows us to follow very closely methods in \cite[section 3]{W1} to achieve our goal. At the end of this section, we present an example. Although it is very simple, this example shows that it is posible to use our results in order to study Hamiltonians with non-compact support.

 \section{Minimax  Selector }\label{min-sel}
Let $K\subset \R^k$ be a compact set, $S_K^{s,t}\in C^1([s,t]\times\R^k\times\R^q)$ 
be $\gfqi$ given as in \eqref{gfqi_k} and  $Q(\xi)=\dfrac12\inp{P\xi,\xi}$ 
be the associated quadratic form. As $S_K^{s,t}=Q$ outside a compact set, the critical 
levels of $S_K^{s,t}$ are bounded. 
There is $R(K)<0$ such that for $\ R'< R(K)$, the sub--level set 
 $$(S_K^{s,t})_{\t,x}^{R'}=\set{\xi\in\R^q|S_k^{s,t}(\t,x;\xi)< R'}
$$ 
is identical to the sub--level $Q^{R'}$.
\begin{defn}\label{fG}
Let $j$ be the Morse index of $Q$ and $a>0$ large. We define
$\fG_a$ as the set of continuous maps $\s:B_j\to\R^q$, of the unit ball
$B_j$ of dimension $j$, such that
$$\s(\p B_j)\subset  Q^{-1}( -\infty, -a).$$
\end{defn}
\begin{lem}\label{RHK}
Let $v\in C^{Lip}(\R^k)$ 
$H\in C^2_c([0,T]\times J^1\R^k)$.
Let $S_K^{s,t}\in C^1([s,t]\times\R^k\times\R^q)$ be as in \eqref{gfqi_k}.
Let $K\subset \R^k$ be compact, $a>-R(K)$. 
The function 
\begin{equation}\label{us}
 (\t,x)\in[s,t]\times\R^k\mapsto R_{H,K}^{s,\t}v(x)
=\inf_{\s\in \fG_a}\max_{e\in B_j}S_K^{s,t}(\t,x,\s(e)), 
\end{equation}
has the following properties
  \begin{enumerate}[(a)]
 \item $R_{H,K}^{s,\t}v(x)$ is a critical value of $\xi \to S_K^{s,t}(\t,x,\xi);$ 
  \item it is a Lipschitz function and therefore
    differentiable  almost everywhere \emph{a.e.};
  \item $j^1R_{H,K}^{s,\t}v$ is an \emph{a.e.} section of the wave front
$$
\set{(\t,x,\p_{x}S_K^{s,t}(\t,x,\xi),S_K^{s,t}(\t,x,\xi)): 0\in\p_{\xi}S_K^{s,t}(\t,x,\xi)}.
$$ 
\end{enumerate}
 \end{lem}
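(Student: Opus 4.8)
The plan is to establish the three assertions in turn, leaning on the minimax principle of the appendix (\cite{STR}) for (a), on a $C^0$-stability estimate for the minimax combined with Proposition \ref{cut} for (b), and on a differentiation-of-minimax argument for (c). Throughout, fix the compact $K$, write $R=R_{H,K}^{s,\t}v$, and recall from Proposition \ref{cut} that $S_K^{s,t}(\t,x,\xi)=\th(\xi/a_K)W^{s,t}(\t,x,\xi)+Q(\xi)$ with $Q(\xi)=\tfrac12\inp{P\xi,\xi}$ nondegenerate and $S_K^{s,t}=Q$ outside a compact set.

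For (a), fix $(\t,x)$ and set $f=S_K^{s,t}(\t,x,\cdot)\colon\R^q\to\R$. Since $f=Q$ for $\norm{\xi}$ large, there $\grad f=P\xi$ with $\norm{P\xi}\ge\norm{P^{-1}}^{-1}\norm{\xi}\to\infty$, so any sequence with $\grad f\to 0$ is bounded and, being in finite dimension, subconverges to a critical point; thus $f$ satisfies Palais--Smale. The family $\fG_a$ is modelled on the relative homology generator $A\in H_j(Q^\infty,Q^{-\infty})$ and is invariant under the negative pseudo-gradient flow of $f$, because that flow decreases $f$ while $\s(\p B_j)\subset Q^{-1}(-\infty,-a)=\set{f<-a}$ is preserved. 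By the deformation lemma, if $c=\inf_{\s}\max_{e}f(\s(e))$ were a regular value one could push a nearly optimal $\s$ strictly below level $c$, contradicting $c$ being an infimum; hence $c$ is a critical value of $f$, which is (a).

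For (b), the crucial point is that minimax is a $1$-Lipschitz operation in the $C^0$-distance between generating families sharing $Q$: if $\abs{S_1-S_2}\le\ep$ pointwise, then $\max_{e}S_1(\s(e))\le\max_{e}S_2(\s(e))+\ep$ for each $\s$, and taking the infimum over $\fG_a$ and swapping roles gives $\abs{R_{S_1}-R_{S_2}}\le\ep$. Apply this to $S_1=S_K^{s,t}(\t_1,x_1,\cdot)$ and $S_2=S_K^{s,t}(\t_2,x_2,\cdot)$. Because $Q$ is independent of $(\t,x)$, the difference $S_1-S_2$ is supported where $\th(\xi/a_K)\ne 0$, a compact $\xi$-set on which the $C^2$ function $W^{s,t}$ is Lipschitz in $(\t,x)$ uniformly in $\xi$, so $\abs{S_1-S_2}\le L(\abs{\t_1-\t_2}+\norm{x_1-x_2})$. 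Hence $R$ is Lipschitz on $[s,t]\times\R^k$, and Rademacher's theorem yields differentiability a.e., which is (b).

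For (c), at a point $(\t_0,x_0)$ where $R$ is differentiable I must exhibit a critical point $\xi_0$ of $S_K^{s,t}(\t_0,x_0,\cdot)$ with $S_K^{s,t}(\t_0,x_0,\xi_0)=R(\t_0,x_0)$ (value matching being (a)) and $\p_x S_K^{s,t}(\t_0,x_0,\xi_0)=d_xR(\t_0,x_0)$, so that $(\t_0,x_0,d_xR,R)$ lies in the wavefront. The approach is to prove the Clarke gradient inclusion
$$\p R(\t_0,x_0)\subset\cet{(\p_{\t},\p_x)S_K^{s,t}(\t_0,x_0,\xi)}{0\in\p_{\xi}S_K^{s,t}(\t_0,x_0,\xi),\ S_K^{s,t}(\t_0,x_0,\xi)=R(\t_0,x_0)},$$
obtained by comparing $R$ at nearby points through a common near-optimal class in $\fG_a$ and Taylor-expanding the $C^1$ family $S_K^{s,t}$ in $(\t,x)$, using that the maximum over $\s(B_j)$ concentrates near the critical level. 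At a differentiability point the left-hand side is the singleton $\set{dR(\t_0,x_0)}$, so $d_xR(\t_0,x_0)=\p_x S_K^{s,t}(\t_0,x_0,\xi_0)$ for some critical $\xi_0$ at level $R(\t_0,x_0)$; this is precisely (c). The hard part is this last step: since the minimax-realizing critical point need not be unique or nondegenerate, there is no implicit-function parametrization $\xi(\t,x)$, and the naive chain rule $d_xR=\p_xS_K+\p_{\xi}S_K\,d_x\xi$ (with $\p_{\xi}S_K=0$) is unavailable. The rigorous substitute is the two-sided estimate $R(\t,x)-R(\t_0,x_0)\le \p_x S_K\cdot(x-x_0)+\p_{\t}S_K\cdot(\t-\t_0)+o(\abs{\t-\t_0}+\norm{x-x_0})$ together with its reverse, extracted from the structure of $\fG_a$, and this is where the adaptation of the techniques of \cite{W1} carries the genuine weight.
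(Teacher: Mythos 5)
Your parts (a) and (b) are correct and essentially the paper's own argument: (a) is the minimax principle applied to the flow-invariant family $\fG_a$ after checking Palais--Smale (automatic because $S_K^{s,t}=Q$ outside a compact set, $Q$ nondegenerate), and (b) compares the two minimax values through a common near-optimal $\s_0\in\fG_a$, the increment being controlled by the uniform Lipschitz constant of $W^{s,t}(\cdot,\cdot,\xi)$ from Proposition \ref{cut} since $Q$ cancels.

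Part (c) contains a genuine gap. The two-sided estimate you propose (comparing $R$ at nearby points through a common near-optimal class and letting the maximum concentrate near the critical level) yields, at a differentiability point, only the directional bound $\inp{dR(\t_0,x_0),h}\le\max\cet{\inp{\p_xS_K^{s,t}(\t_0,x_0,\xi),h}}{\xi \text{ critical at level } R(\t_0,x_0)}$ for each $h$, hence only the \emph{convex-hull} inclusion $dR(\t_0,x_0)\in\co{\p_xS_K^{s,t}(\t_0,x_0,\xi)}$ over such $\xi$ --- which is exactly what the paper proves later as Proposition \ref{w1:prop:2.27}. Your stated inclusion \emph{without} the convex hull does not follow from this method, and the step ``the left-hand side is a singleton, so $d_xR=\p_xS_K^{s,t}(\t_0,x_0,\xi_0)$ for some critical $\xi_0$'' is a non sequitur: a point lying in the convex hull of a compact set need not belong to the set, and the minimax level may carry several critical points with distinct $x$-derivatives. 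This is precisely the difficulty, and you explicitly defer it rather than resolve it. The paper closes it by a different mechanism: Sard's theorem applied to the projection of the compact critical locus $\S_K^{s,t}=\set{(\t,x,\xi):\p_{\xi}S_K^{s,t}(\t,x,\xi)=0}$ shows that for almost every $x$ the critical points over a neighborhood $U$ form finitely many smooth branches $\phi_1,\ldots,\phi_m$ with $R=S_K^{s,t}(\t,\cdot;\phi_i(\cdot))$ for some $i$ at each point of $U$; a sequential pigeonhole argument in each direction $h$ then identifies $dR(x)$ with $dS_K^{s,t}(\t,\cdot;\phi_i(\cdot))(x)$ for a single branch, and the chain rule together with $\p_{\xi}S_K^{s,t}=0$ along the branch gives $dR(x)=\p_xS_K^{s,t}(\t,x;\phi_i(x))$. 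Some version of this finite-branch selection (or another argument excluding proper convex combinations) is required to complete your (c).
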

\begin{proof} Since $S_K^{s,t}(\t,x,\xi)=Q(\xi)$ outside a compact set
  and $Q$ is non-degenerate, we have that 
$$\S_K^{s,t}=\set{(\t,x,\xi)|\p_{\xi}S_K^{s,t}(\t,x,\xi)=0}$$ is compact and
$S_K^{s,t}(x,\t,\cdot):\R^q\to\R$ satisfies the Palais-Smale condition.

Let  $\pi: \S_K^{s,t}\to\R\times\R^k$  be the projection $(\t,x,\xi)\mapsto(\t,x)$.
By Sard's Theorem the set of critival values of $\pi$ has null measure. 

  \paragraph{(a)}
Apply the minimax principle 
(theorem \ref{thm:principiominimax}) with the positively-invariant family $\fG_a$ of Definition \ref{fG}. 

 \paragraph{(b)}
Given $\ep>0,$ there exists $\s_0\in \fG_a$ such that
\[ R_{H,K}^{s,\t_0}v(x_0)\geq \max_{e\in B_j}S_K^{s,t}(\t_0,x_0,\s_0(e))-\ep
   \geq S_K^{s,t}(\t_0,x_0,\s_0(e))-\ep,\] 
for any $e\in B_j$. Let $\max\limits_{e\in B_j}S_K^{s,t}(\t_1,x_1,\s_0(e))= 
S_K^{s,t}(\t_1,x_1, \xi_1)$, then
\begin{align*}
  R_{H,K}^{s,\t_1}v(x_1)-R_{H,K}^{s,t}v(\t_0, x_0)&
\leq S_K^{s,t}(\t_1,x_1,\xi_1)-S_K^{s,t}(\t_0,x_0,\xi_1)+\ep \\
&=\th\Big(\frac{\xi_1}{a_K}\Big)
\big( W^{s,t}(\t_1,x_1,\xi_1)- W^{s,t}(\t_0,x_0,\xi_1)\big)+\ep\\
&\leq A_K(|\t_1-\t_0|+\norm{x_1-x_0})+\ep.
\end{align*}
Letting $\ep\to 0$ and exchanging $(\t_0,x_0)$ and $(\t_1,x_1)$, we get
 $$
 \abs{R_{H,K}^{s,\t_1}v(x_1)-R_{H,K}^{s,t}v(\t_0, x_0)}
\leq A_K(|\t_1-\t_0|+\norm{x_1-x_0}).
 $$

\paragraph{(c)} Let $x_0\in\R^k$ be a regular value of 
$\pi:\S_t\to\R^k$. There is a neighborhood $U$ of $x$ and
difeomorphisms $\phi_i:V_i\to U$, $i=1,\ldots,m$  such that
$$
\pi^{-1}(U)=\bigcup_{i=1}^m \phi_i(U).
$$
For each $x\in U$ there is $i=1,\ldots,m$ such that
\begin{equation}            
 \label{u-valorcrit}
R_{H,K}^{s,\t}v(x)=S_K^{s,t}(\t,x;\phi_i(x))                                                          
\end{equation}
Let $x\in U$ be differentiability point of $R_{H,K}^{s,\t}v$. Proving that there is  
$i=1,\ldots,m$ such that 
\begin{equation}
  \label{eq:meta}
dR_{H,K}^{s,\t}v(x)=dS_K^{s,t}(\t,\cdot; \phi_i(\cdot))(x)  
\end{equation}
will finish the proof. Indeed, as $\p_{\xi}S_K^{s,t}(\t,x,\phi_i(x))=0$
we have
\begin{align*}
dS_K^{s,t}(\t,\cdot;\phi_i(\cdot))(x)&
=\p_{x}S_K^{s,t}(\t,x;\phi_i(x))+\p_{\xi}S_K^{s,t}(\t,x;\phi_i(x))d\phi_i(x)\\
 &=\p_{x}S_K^{s,t}(\t,x;\phi_i(x)).
\end{align*}
and so
$$
j^1R_{H,K}^{s,\t}v(x)\in\{(x,\p_{x}S_K^{s,t}(\t,x;\xi),S_K^{s,t}(\t,x;\xi)):
\p_\xi S_K^{s,t}(\t,x;\xi)=0\}.
$$
To prove \eqref{eq:meta} it suffices to show that there is $i$ such 
that for any unit vector $h$  
\begin{equation}
  \label{eq:h-i}
dR_{H,K}^{s,\t}v(x)\cdot h=dS_K^{s,t}(\t,\cdot; \phi_i(\cdot))(x)\cdot h
\end{equation}
and for that it is enough to show that any unit vector $h$  there is $i=1,\ldots,m$
such that \eqref{eq:h-i} holds, because in such a case there is $i=1,\ldots,m$
such that \eqref{eq:h-i} holds for a base of unit vectors.
Now, there is $\ep>0$ such that for any unit vector $h$ and $|s|<\ep$  
$x+sh\in U$ and so there is $i=i(h,s)$ such that 
$R_{H,K}^{s,\t}v(x+sh)=S_K^{s,t}(x+sh; \phi_i(x+sh))$. For $h$ fixed there is $i=i(h)$ for which
and a sequence $s_k$ converging to zero such that  
$$ R_{H,K}^{s,\t}v(x+s_kh)=S_K^{s,t}(\t,x+s_kh; \phi_i(x+s_kh)) $$
which implies \eqref{eq:h-i}.
\end{proof}

\begin{cor}\label{monotone}
  If $v\le w$ then $ R_{H,K}^{s,\t}v\le R_{H,K}^{s,\t}w$
\end{cor}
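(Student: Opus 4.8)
The plan is to reduce the statement to the monotonicity of the generating families in the initial datum, after which the conclusion follows from the evident monotonicity of the $\infmax$ operation. The key observation is that, for fixed $\t$, $x$ and $\xi=(x_0,\dots,x_{N-1},y_1,\dots,y_N)$, the value $S^{s,t}(\t,x,\xi)$ depends on $v$ only through the single real number $v(x_0)$, and does so monotonically. Indeed, by the telescoping sum defining \eqref{gfqi_t} together with the inductive relation \eqref{c-0}, one has $S^{s,t}(\t,x,\xi)=\bz_N$, where $\bz_0=v(x_0)$ and $\bz_j=F_j(\bz_{j-1})$ with
\[
F_j(z)=z+(x_j-x_{j-1})y_j-\F^{\t_{j-1},\t_j}(x_{j-1},y_j,z).
\]
Since $F_j'(z)=1-\pd{z}\F^{\t_{j-1},\t_j}(x_{j-1},y_j,z)>0$ (the generating function is built from a near-identity contactomorphism, so $\pd{z}\F<1$, and it never vanishes), each $F_j$ is strictly increasing; hence so is the composition $F_N\comp\cdots\comp F_1$. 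Consequently $a\mapsto (F_N\comp\cdots\comp F_1)(a)$ is increasing, so if $v\le w$ then $v(x_0)\le w(x_0)$ gives, for every $\xi$,
\[
S^{s,t}_{v}(\t,x,\xi)\le S^{s,t}_{w}(\t,x,\xi),
\]
where the subscript records which datum is used.

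Next I would transfer this inequality to the cut-off families $S^{s,t}_K$ of \eqref{gfqi_k}. Writing $S^{s,t}_K=\th(\xi/a_K)W^{s,t}+Q$ with $Q$ the datum-independent quadratic form of \eqref{eq:Q} and $W^{s,t}=S^{s,t}-Q$, the monotonicity just established passes to $W^{s,t}$ (as $Q$ does not involve $v$), so $W^{s,t}_{v}\le W^{s,t}_{w}$ pointwise. Choosing a single $a_K$ large enough to meet the requirements of Proposition \ref{cut} for both data — which does not alter either selector, since for a fixed datum any two admissible choices of $a_K$ yield $\gfqi$'s of the same Legendrian and hence, by Theorem \ref{chaperon:thm:theret}, the same minimax value — and using $\th\ge 0$, we obtain
\[
S^{s,t}_{K,v}(\t,x,\xi)\le S^{s,t}_{K,w}(\t,x,\xi)\qquad\text{for all }\xi.
\]

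Finally, monotonicity of the selector follows from monotonicity of $\infmax$: for every admissible $\s\in\fG_a$ we have $\max_{e}S^{s,t}_{K,v}(\t,x,\s(e))\le \max_{e}S^{s,t}_{K,w}(\t,x,\s(e))$, and taking the infimum over $\s$ preserves the inequality, giving $R_{H,K}^{s,\t}v(x)\le R_{H,K}^{s,\t}w(x)$. I expect the only genuine subtlety to be the bookkeeping around the cut-off: one must ensure the same $\th(\cdot/a_K)$ is used for both data so that the pointwise comparison of the $S^{s,t}_K$ is meaningful, and invoke the independence of the minimax value on the admissible choice of $a_K$. Everything else is the elementary fact that a composition of increasing functions is increasing, combined with the monotonicity of $\inf$ and $\max$.
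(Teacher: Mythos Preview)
Your argument is correct and follows the same route as the paper, which simply asserts that the result ``is clear from \eqref{gfqi_t}, \eqref{gfqi_k} and \eqref{us}.'' You in fact supply the key point the paper glosses over: because the $\bz_j$ depend recursively on $v(x_0)$ through the $\F$-terms, monotonicity of $S^{s,t}$ in $v$ is not immediate from the formula, and your observation that each $F_j'(z)=1-\pd{z}\F^{\t_{j-1},\t_j}>0$ (positive since the $\vf^{\t_{j-1},\t_j}$ are isotopic to the identity, so the conformal factor $g=1-\pd{z}\F$ stays positive) is exactly what is needed. The remaining steps---passing to $S^{s,t}_K$ via $\th\ge 0$ with a common $a_K$, then using monotonicity of $\inf\max$---are routine and match the paper's intent.
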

 \begin{proof}
  This is clear from \eqref{gfqi_t}, \eqref{gfqi_k} and \eqref{us}.
\end{proof}

\begin{prop}[Cf. \cite{W1}, 2.22]
 \label{ks}
Let $K,K'\subset \R^{k}$ be compact. If $x\in K\cap K',$ $\t\in[s,t]$
then $R_{H,K'}^{s,\t}v( x)=R_{H, K}^{s,\t}v(x).$ 
\end{prop}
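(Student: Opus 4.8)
The plan is to fix $\tau\in[s,t]$ and $x\in K\cap K'$ and to show that the number produced by \eqref{us} does not depend on the compact set through which the cutoff was built. The key structural observation is that, by \eqref{gfqi_k}, both functions $S^{s,t}_K(\tau,x,\cdot)$ and $S^{s,t}_{K'}(\tau,x,\cdot)$ have the common shape $\xi\mapsto \theta(\xi/a)\,W^{s,t}(\tau,x,\xi)+Q(\xi)$ and differ only through the scale $a\in\set{a_K,a_{K'}}$ of the cutoff; moreover the quadratic form $Q$ of \eqref{eq:Q}, and hence the admissible class $\fG_a$ of Definition \ref{fG}, are the same for both and independent of the compact set. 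The minimax value is therefore a function of this single scale parameter, and I would connect $a_K$ to $a_{K'}$ by a path and prove the value is constant along it.

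Concretely, assuming without loss of generality $a_K\le a_{K'}$, I set, for $r\in[a_K,a_{K'}]$,
\[
  S_r(\xi)=\theta\Big(\frac{\xi}{r}\Big)\,W^{s,t}(\tau,x,\xi)+Q(\xi),
\]
so that $S_{a_K}=S^{s,t}_K(\tau,x,\cdot)$ and $S_{a_{K'}}=S^{s,t}_{K'}(\tau,x,\cdot)$. First I would rerun the gradient estimate from the proof of Proposition \ref{cut} with $r$ in place of $a_K$: since increasing the scale only shrinks $\tfrac1r D\theta(\cdot/r)$, the same computation shows, the scales being already chosen large in \eqref{gfqi_k}, that there is a radius $R$ with $0\notin\p_\xi S_r(\xi)$ whenever $\norm{\xi}\ge R$, uniformly in $r\in[a_K,a_{K'}]$, and with $\theta(\cdot/r)\equiv 1$ on $\set{\norm{\xi}\le R}$ for every such $r$. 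Consequently $S_r\equiv S^{s,t}(\tau,x,\cdot)$ on the ball $\set{\norm{\xi}\le R}$, which contains the entire critical set of every $S_r$; hence all the $S_r$ share one and the same critical set, and one and the same set $\mathcal C\subset\R$ of critical values.

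To finish, set $\rho(r)=\inf_{\s\in\fG_a}\max_{e\in B_j}S_r(\s(e))$, with $a$ taken large enough that $\fG_a$ is admissible for every $S_r$, $r\in[a_K,a_{K'}]$ (possible since this range is compact). By the same minimax principle as in Lemma \ref{RHK}(a), each $\rho(r)$ is a critical value of $S_r$, so $\rho(r)\in\mathcal C$. The elementary inequality $\abs{\rho(r_1)-\rho(r_0)}\le\sup_\xi\abs{S_{r_1}(\xi)-S_{r_0}(\xi)}$, together with the fact that $S_{r_1}-S_{r_0}=\big(\theta(\cdot/r_1)-\theta(\cdot/r_0)\big)W^{s,t}(\tau,x,\cdot)$ is supported in a fixed compact set on which $W^{s,t}(\tau,x,\cdot)$ is bounded, shows that $r\mapsto\rho(r)$ is continuous. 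By Sard's theorem $\mathcal C$ has measure zero, hence is totally disconnected, and a continuous map of the interval $[a_K,a_{K'}]$ into a totally disconnected set is constant. Therefore $\rho(a_K)=\rho(a_{K'})$, that is $R^{s,\tau}_{H,K}v(x)=R^{s,\tau}_{H,K'}v(x)$.

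The main obstacle is the second step: making the estimate of Proposition \ref{cut} uniform in the deformation parameter $r$ so that no critical point escapes the fixed ball $\set{\norm{\xi}\le R}$ on which all the $S_r$ coincide, and doing this within Clarke's calculus, where $\p_\xi S_r$ is a subdifferential. Once the critical set, and hence the critical-value set $\mathcal C$, is seen to be genuinely independent of $r$, the principle that a continuous map into a discrete set is constant closes the argument; the remaining points (the common admissibility of $\fG_a$ across the family and the continuity of $\rho$) are routine and follow the lines of Lemma \ref{RHK}.
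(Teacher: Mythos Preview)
Your approach is correct in outline but differs from the paper's. The paper does not vary the cutoff scale; it deforms the \emph{test cycles} instead. Using the (suitably truncated) gradient flow of $Q$, any $\sigma\in\mathfrak G_a$ is pushed into a deeper class $\mathfrak G_{a''}$ so that the maximum of $S_K(\tau,x,\cdot)$ along the deformed cycle is attained only where both cutoffs $\theta(\cdot/a_K)$ and $\theta(\cdot/a_{K'})$ equal $1$; on that region $S_K=S_{K'}$, hence the two minimax values coincide. This is a direct argument that never touches the critical--value set.

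Your route---deform the function through the one--parameter family $S_r$, observe that all $S_r$ share the same critical set, and use continuity of $r\mapsto\rho(r)$ together with total disconnectedness of the common critical--value set $\mathcal C$---is exactly the mechanism used later in Proposition~\ref{ind}. The advantage is conceptual uniformity with that proof; the cost is the appeal to Sard. Here you should be careful: in the stated generality $v$ is merely Lipschitz, so $\xi\mapsto S^{s,t}(\tau,x,\xi)$ is only Lipschitz in the $x_0$--coordinate and the classical Sard theorem does not give that the (Clarke) critical--value set has measure zero. The paper's own proof of Proposition~\ref{ind} shares this soft spot, so you are in good company, but it is a genuine gap for general $v\in C^{\mathrm{Lip}}$. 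If $v\in C^2$ your argument goes through cleanly; otherwise the paper's cycle--deformation argument is the safer one, precisely because it sidesteps the structure of $\mathcal C$ altogether.
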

\begin{proof}
This follows from the fact for $a>-R(K), a'>-R(K')$,  any $\s\in \fG_a$, 
$\s'\in \fG_{a'}$ can be deformed into an $\s''\in \fG_{a''}$, $a''>a,a'$,   
with
 $$
 \max_{e\in B_j} S_{K}^{s,t}(\t,x,\s''(e))=\max_{e\in B_j}S_{K'}^{s,t}(\t,x,\s''(e)),
 $$
by using the gradient flow de $Q,$ suitable truncated. 
\end{proof}

Propositions \ref{cut} and \ref{ks} allow one to define
\begin{equation}\label{Rs}
 R_{H}^{s,\t}v(x)=\inf_{\s\in\fG_a}\max_{e\in B_j}S^{s,t}(\t, x; \s(e)).
\end{equation}

From Lemma \ref{RHK} we obtain
\begin{thm}
 Function $(\t,x)\in[s,t]\times\R^k\mapsto R_H^{s,\t}v(x)$
has the following properties
  \begin{enumerate}[(a)]
 \item $R_H^{s,\t}v(x)$ is a critical value of $\xi \to S^{s,t}(\t,x,\xi);$ 
  \item it is a Lipschitz function and therefore
    differentiable  almost everywhere \emph{a.e.}.
  \item $j^1R_H^{s,\t}v$ is an \emph{a.e.} section of the wave front
$$\set{(\t,x,\p_{x}S^{s,t}(\t,x,\xi),S^{s,t}(\t,x,\xi)):
  0\in\p_{\xi}S^{s,t}(\t,x,\xi)}.$$ 
\end{enumerate}
\end{thm}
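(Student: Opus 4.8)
The plan is to deduce the theorem directly from Lemma \ref{RHK} by a localization argument, since $R_H^{s,\t}v$ was defined in \eqref{Rs} precisely so that it agrees, near every point, with one of the compactly truncated selectors $R_{H,K}^{s,\t}v$. Concretely, I would first record the observation underlying \eqref{Rs}: given $x_0\in\R^k$, choose a compact set $K$ with $x_0$ in its interior and $a>-R(K)$; then by Proposition \ref{ks} the value $R_{H,K}^{s,\t}v(x_0)$ does not depend on the particular $K$ containing $x_0$, so that $R_H^{s,\t}v(x)=R_{H,K}^{s,\t}v(x)$ for every $x$ in the interior of $K$. Because each of the assertions (a)--(c) is local in $x$ --- being a critical value, being Lipschitz, and having the $1$-jet land in the wave front can all be checked on a neighborhood of a point --- it then suffices to transport the corresponding item of Lemma \ref{RHK} from $R_{H,K}^{s,\t}v$ to $R_H^{s,\t}v$.

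For (a) and (c), the one point requiring care is that Lemma \ref{RHK} produces a critical value of, and a wave-front section for, the truncated family $S_K^{s,t}$, whereas the theorem is phrased in terms of $S^{s,t}$. Here I would invoke the construction in Proposition \ref{cut}: the constant $a_K$ is chosen so large that $\th(\xi/a_K)\equiv 1$ on the region $\norm{\xi}\le b_K$, while outside that region $0\notin\p_{\xi}S_K^{s,t}$, so all critical points of $S_K^{s,t}(\t,x,\cdot)$ lie inside it. On this region $S_K^{s,t}=S^{s,t}$, hence every critical value of $S_K^{s,t}(\t,x,\cdot)$ is a critical value of $S^{s,t}(\t,x,\cdot)$ with $0\in\p_{\xi}S^{s,t}(\t,x,\xi)$, and the point $(\t,x,\p_{x}S^{s,t}(\t,x,\xi),S^{s,t}(\t,x,\xi))$ of the $S_K$-wave front is also a point of the $S^{s,t}$-wave front. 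This converts items (a) and (c) of Lemma \ref{RHK} into the $S^{s,t}$-statements claimed here.

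For (b), Lemma \ref{RHK}(b) gives, on each compact $K$, a Lipschitz constant $A_K$ for $R_{H,K}^{s,\t}v$ on $[s,t]\times K$. Since $R_H^{s,\t}v$ coincides with $R_{H,K}^{s,\t}v$ on the interior of $K$, it follows that $R_H^{s,\t}v$ is Lipschitz on every compact subset of $[s,t]\times\R^k$, hence locally Lipschitz; Rademacher's theorem then yields differentiability almost everywhere, and (c) is asserted exactly at those differentiability points.

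I expect no deep obstacle: the substance is already contained in Lemma \ref{RHK} (which handles a single compact $K$) together with Propositions \ref{cut} and \ref{ks} (which make the passage to all of $\R^k$ consistent). The only genuinely delicate bookkeeping is the identification of critical values and wave-front points of the truncated $S_K^{s,t}$ with those of $S^{s,t}$, i.e.\ verifying that truncation by $\th(\cdot/a_K)$ neither creates nor destroys critical points in the relevant sublevel range; this is exactly what the estimates in the proof of Proposition \ref{cut} guarantee once $a_K$ and $b_K$ are taken large enough.
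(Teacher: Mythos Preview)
Your proposal is correct and is exactly the approach the paper takes: the paper simply writes ``From Lemma \ref{RHK} we obtain'' and states the theorem, leaving the localization argument implicit. You have spelled out the details that the paper omits, in particular the point that the truncation in Proposition \ref{cut} does not alter the critical set in the relevant region, so that critical values and wave-front points for $S_K^{s,t}$ coincide with those for $S^{s,t}$.
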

 \begin{prop}
 \label{ind}
The definition of $R_{H}^{s,\t}v(x)$ is independent of the partition of 
$[0,T]$  used to define $S$.
 \end{prop}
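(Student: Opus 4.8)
The plan is to reduce everything to the fact, recorded just before Theorem~\ref{chaperon:thm:theret}, that the minimax value attached to a $\gfqi$ depends only on the Legendrian it generates and not on the generating family itself; the statement then follows once I check that two partitions of $[0,T]$ produce $\gfqi$'s of one and the same Legendrian. First I would isolate the intrinsic object: $L^\t=\vf_H^{s,\t}(j^1v)$ (and, for Lipschitz $v$, $L^\t=\vf_H^{s,\t}(\p v)$) is the image of $j^1v$ under the contact isotopy generated by $H$, and its definition uses neither the intermediate times $t_1,\dots,t_{N-1}$ nor the pieces $\F^{t_{j-1},t_j}$. By Propositions~\ref{main:thm:01} and~\ref{genleg}, for \emph{every} admissible partition the family $S^{s,t}$ of \eqref{gfqi_t} generates exactly this same $L^\t$, which is moreover contact isotopic to the zero section (concatenate $r\mapsto j^1(rv)$, $r\in[0,1]$, with the flow $\vf_H^{s,\cdot}$), so the hypothesis of Theorem~\ref{chaperon:thm:theret} is met.

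Next, for $v\in C^2$, I would invoke Theorem~\ref{chaperon:thm:theret}: the two $\gfqi$'s differ only by stabilizations and fibrewise diffeomorphisms. The minimax \eqref{Rs} is insensitive to both — a stabilization by a non-degenerate $q(\eta)$ only shifts the Morse index $j$, which is absorbed by the corresponding shift of the admissible class $\fG_a$ of Definition~\ref{fG}, while a fibre diffeomorphism $\psi(x,\cdot)$ merely reparametrizes the maps $\s\in\fG_a$ and leaves the selected critical value unchanged. The compact truncation does not interfere: Proposition~\ref{ks} shows the truncated values agree on overlaps, so $R_H^{s,\t}v(x)$ may be computed from any $S^{s,t}_K$, and applying the invariance to $S^{s,t}_K$ for the two partitions yields equality.

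For $v$ only Lipschitz I would argue by approximation. Choosing $v_n\in C^2(\R^k)$ with $v_n\to v$ uniformly on compacts, the previous case gives that each $R_H^{s,\t}v_n$ is partition-independent. Since $v\mapsto S^{s,t}$ is Lipschitz for the sup-norm (the $\F$'s are $C^2$ with compact support and the recursion \eqref{c-0} composes Lipschitz maps) and minimax is non-expansive for families sharing a fixed quadratic part, one obtains $\abs{R_H^{s,\t}v(x)-R_H^{s,\t}w(x)}\le C_K\sup_K\abs{v-w}$ on each compact $K$ with $C_K$ independent of the partition — a bound that can also be read off from the monotonicity of Corollary~\ref{monotone} and the uniform Lipschitz control of $W^{s,t}$ in Proposition~\ref{cut}. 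Hence $R_H^{s,\t}v_n\to R_H^{s,\t}v$ uniformly on compacts for every partition, and a partition-independent sequence has a partition-independent limit.

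The hard part will be this Lipschitz case. Theorem~\ref{chaperon:thm:theret} is available only for smooth Legendrians and cannot be applied to $\p v$ directly, so the approximation step is essential, and it rests on the continuity estimate above whose constant must be controlled \emph{uniformly in the partition} — precisely where the uniform Lipschitz bound of Proposition~\ref{cut} enters. A secondary difficulty is that distinct partitions carry fibre spaces of different dimension $2kN$, hence different quadratic parts $Q$ in \eqref{eq:Q}; these match only after the stabilizations furnished by Theorem~\ref{chaperon:thm:theret}, and tracking the resulting index shift while staying inside the class $\fG_a$ is the bookkeeping that needs the most care.
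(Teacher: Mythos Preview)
Your argument is correct, but the paper proceeds quite differently. Rather than invoking the abstract uniqueness Theorem~\ref{chaperon:thm:theret}, it builds an explicit homotopy: given a partition and a single extra node $\t\in(s,t)$, it slides that node via $\zeta_\mu=\{s\le s+\mu(\t-s)<t\}$, $\mu\in[0,1]$, obtaining generating families $S_\mu$ depending continuously on $\mu$. Since $R_{S_\mu}(\t,x)$ is always a critical value of $S_\mu(\t,x,\cdot)$, and by the action formula \eqref{eq:action} the \emph{set} of critical values does not depend on $\mu$ and (Sard) has measure zero, the continuous function $\mu\mapsto R_{S_\mu}$ must be constant. At $\mu=0$ the computation shows explicitly that the redundant node contributes only a stabilizing quadratic $x'y'$; two arbitrary partitions are then compared through their common refinement, removing one node at a time.

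What each approach buys: the paper's route is self-contained, exhibits the stabilization concretely, and needs no separate treatment of Lipschitz~$v$ (the action identity \eqref{eq:action} and the critical-value argument go through unchanged for the generalized family of Proposition~\ref{genleg}). Your route is more conceptual and shorter once Theorem~\ref{chaperon:thm:theret} is granted, but it does push real work into the approximation step: Theorem~\ref{chaperon:thm:theret} is stated for smooth Legendrians and cannot be applied directly to $\p v$, so you must establish the estimate $|R_\zeta^{s,\t}v_n-R_\zeta^{s,\t}v|\le C\,|v_n-v|$ with $C$ uniform in the partition $\zeta$; as you indicate, this comes down to bounding $\prod_j|1-\p_z\F_j|\le e^{C(t-s)}$ independently of~$N$. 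Note also that monotonicity alone (Corollary~\ref{monotone}) does not suffice here, since in the contact case $R(v+c)\neq R(v)+c$; you genuinely need the Lipschitz control of $v\mapsto S^{s,t}$ through the recursion~\eqref{c-0}.
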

\begin{proof}
First assume $t-s<\del_{H};$ and let $\t \in (s,t).$ 
Consider the family of partitions
  $\zeta_{\mu}=\set{s\leq s+\mu(\t-s)<t}$, $\mu\in[0,1]$, 
and the corresponding generating families
  \begin{align}\notag
    S^{s,t}_{\mu}(\t,x; x_0,y_1, x_1,y_2)=v(x_0)
    &+y_1(x_1-x_0)-\F^{s, s+\mu(\t-s)}(x_0,y_1,z_0)\\
    &+y_2(x-x_1)-\F^{s+\mu(\t-s),t}(x_1,y_2,z_1)\label{Smu}
   \end{align}
Function $S_{\mu}$ is continuous in $\m$ and the minimax $R_{S_{\mu}}^{s,t}(\t,x)$ 
is a critical value of the map $\eta \mapsto S_{\mu}^{s,t}(\tau,x; \eta)$.
By  \eqref{eq:action} the set of such critical values is independient of $\mu,$
and by Sard's Theorem, it has measure zero. Therefore $R_{S_{\mu}}^{s,t}$ is 
constant for $\mu\in [0,1].$ 
Letting $x'=x_0-x_1$ y $y'=y_2-y_1$, we obtain 
$$S_0^{s,t}\left( \t,x; x_0,y_1,x_1,y_2 \right)=
v(x_0)-\F^{s,t}(x_1,y_2,z_1)+\left( x-x_1 \right)y_2+x'y'.
$$
One gets this \gfqi adding the 
quadratic form $x'y'$ to the \gfqi
$$
S^{s,t}\left( \t,x; x_0,y_1,x_1,y_2 \right)=
v(x_0)-\F^{s,t}(x_1,y_2,z_1)+\left( x-x_1 \right)y_2,
$$ so that
$$
R_{S}^{s,t}v(x)=R_{S_0}^{s,t}v(x)=R_{S_1}^{s,t}v(x).
$$

In general, given two partitions $\zeta',\zeta''$ of $[s,t]$ with 
$\abs{\zeta'},\abs{\zeta'}< \del_{H},$ let
$$
\zeta=\zeta'\cup\zeta''=\set{s=t_0<\cdots<t_{n}=t}, 
$$ 
be the (smallest) common refinement of $\zeta', \zeta''.$ 
If $t_{j}$ does not belong to $\zeta',$ consider the family of partitions
$$
\zeta_{\mu}(j)=\set{t_0<t_{j-1}\leq t_{j-1}+\mu(t_{j}-t_{j-1})<t_{j+1}<\cdots<t_{n}},\,
\mu\in[0,1]
$$
The argument given at the begining shows that the minimax relative to
$\zeta_0(j)$ and $\zeta_1(j)$ coincide. Continuing this process, 
we obtaing that the minimax relative to $\zeta'$ and $\zeta$ coincide, 
and so do the minimax relative to $\zeta''$ and $\zeta$ as well as 
the minimax relative to  $\zeta'$ and $\zeta''$.
\end{proof}
 
\begin{prop}[Cf. \cite{W1}, prop. 2.24] The critical levels
 $$ C(\t,x):
=\set{\eta:0\in\p_{\eta}S^{s,t}(\t,x,\eta), S^{s,t}(\t,x,\eta)=R_{H}^{s,\t}v( x)}
 $$
are compact and the set-valued correspondence $(\t,x)\to C(\t,x)$ is upper 
semicontinuous, i.e. for every convergent sequence 
$(\t_j,x_j,\eta_j)\to (\t,x,\eta)$ 
with $\eta_j\in C(\t_j,x_j),$ one has $\eta\in C(\t,x)$. In other words the graph 
$\set{(\t,x,\eta)| \eta \in C(\t,x)}$ of the correspondence is closed.
\end{prop}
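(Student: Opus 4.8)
The plan is to prove the two assertions separately, both resting on the decomposition $S^{s,t}(\t,x,\xi)=W^{s,t}(\t,x,\xi)+Q(\xi)$ of Proposition \ref{cut} together with the observation that the only non-smooth ingredient of $S^{s,t}$ is the term $v(x_0)$, where $x_0$ is one of the coordinate blocks of $\xi$. First I would record the structural fact that $S^{s,t}(\t,x,\xi)=v(x_0)+g(\t,x,\xi)$ with $g\in C^2$, so that the partial Clarke gradient splits as
\[
\p_{\xi}S^{s,t}(\t,x,\xi)=\p v(x_0)\times\set{0}+\grad_{\xi}g(\t,x,\xi),
\]
the sum rule being exact since $g$ is $C^1$. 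Because $\p v$ is upper semicontinuous with compact convex values (the standard property of Clarke's generalized gradient of a locally Lipschitz function, see \cite{CL1}) and $\grad_{\xi}g$ is continuous, the set-valued map $(\t,x,\xi)\mapsto\p_{\xi}S^{s,t}(\t,x,\xi)$ is upper semicontinuous with closed graph; this single fact drives both parts of the statement.

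For compactness of $C(\t,x)$ I would argue that it is the intersection of the critical set $\set{\xi:0\in\p_{\xi}S^{s,t}(\t,x,\xi)}$, closed because the graph of $\p_{\xi}S^{s,t}$ is closed, with the level set $\set{\xi:S^{s,t}(\t,x,\xi)=R_{H}^{s,\t}v(x)}$, closed by continuity of $S^{s,t}$; hence $C(\t,x)$ is closed. For boundedness I would fix a compact $K\ni x$ and invoke Proposition \ref{cut}: writing $Q(\xi)=\tfrac12\inp{B\xi,\xi}$ with $B$ nondegenerate and using that $W^{s,t}(\t,x,\cdot)$ is uniformly Lipschitz on $K$ with constant $c_K$, the relation $0\in\p_{\xi}S^{s,t}=\p_{\xi}W^{s,t}+B\xi$ forces $\norm{B\xi}\le c_K$, so every critical $\xi$ satisfies $\norm{\xi}\le\norm{B^{-1}}c_K$. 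Thus $C(\t,x)$ is bounded, hence compact.

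For the closed-graph assertion, take $(\t_j,x_j,\eta_j)\to(\t,x,\eta)$ with $\eta_j\in C(\t_j,x_j)$, which unpacks into $0\in\p_{\xi}S^{s,t}(\t_j,x_j,\eta_j)$ and $S^{s,t}(\t_j,x_j,\eta_j)=R_{H}^{s,\t_j}v(x_j)$. Passing to the limit in the first relation, the closed graph of $\p_{\xi}S^{s,t}$ gives $0\in\p_{\xi}S^{s,t}(\t,x,\eta)$. For the second, continuity of $S^{s,t}$ yields $S^{s,t}(\t_j,x_j,\eta_j)\to S^{s,t}(\t,x,\eta)$, while the Lipschitz continuity of $(\t,x)\mapsto R_{H}^{s,\t}v(x)$ established above (cf. Lemma \ref{RHK}(b)) yields $R_{H}^{s,\t_j}v(x_j)\to R_{H}^{s,\t}v(x)$; equating the two limits gives $S^{s,t}(\t,x,\eta)=R_{H}^{s,\t}v(x)$. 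Both defining conditions hold, so $\eta\in C(\t,x)$.

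The main obstacle, and the only genuinely non-routine point, is the joint upper semicontinuity of the partial Clarke subdifferential $\p_{\xi}S^{s,t}$; everything else follows formally from continuity of $S^{s,t}$ and of $R_{H}v$ together with the quadratic-at-infinity bound of Proposition \ref{cut}. I expect this to hinge on isolating the non-smoothness in the single Lipschitz term $v(x_0)$ and invoking the upper semicontinuity of Clarke's generalized gradient of $v$; the one point requiring care is to confirm that treating $(\t,x)$ as parameters does not spoil joint upper semicontinuity, which it does not precisely because the non-smooth term $v(x_0)$ does not depend on $(\t,x)$.
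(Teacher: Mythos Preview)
Your approach is essentially the paper's own: both reduce the closed-graph assertion to the upper semicontinuity (closed graph) of Clarke's generalized gradient of $S^{s,t}$ together with the continuity of $S^{s,t}$ and of the minimax $R_H^{s,\cdot}v$. One technical slip worth correcting: the decomposition $S^{s,t}(\t,x,\xi)=v(x_0)+g(\t,x,\xi)$ with $g\in C^2$ is not accurate, because $v(x_0)$ also enters as $z_0=v(x_0)$ in the recursion \eqref{c-0} and hence inside every term $\F^{\t_{j-1},\t_j}(x_{j-1},y_j,z_{j-1})$; the non-smooth dependence on $x_0$ therefore propagates through all the $z_j$, so your displayed formula for $\p_{\xi}S^{s,t}$ is not quite right. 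This does not damage the argument, since the joint upper semicontinuity of $\p_{\xi}S^{s,t}$ follows directly from the general closed-graph property of the Clarke gradient of a locally Lipschitz function (Theorem \ref{w1:A.2}), which is exactly what the paper invokes without attempting any finer structural decomposition. Your explicit compactness argument (closed plus bounded via the quadratic-at-infinity estimate of Proposition \ref{cut}) is in fact more complete than the paper's proof, which only spells out the closed-graph part and leaves compactness implicit.
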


\begin{proof}
Let $(\t_j,x_j,\eta_j)\to (\t,x,\eta)$ with $\eta_j\in C(\t_j,x_j).$  Since
$S^{s,t}$ is  $C^1$ with respect to $x,$ one has 
$\p S^{s,t}=\p_xS^{s,t}\times \p_\eta S^{s,t},$ which
is upper semicontinuous (lemma \cite[A.2]{W1}). It follows that the limit 
$(\p_xS^{s,t}(\t_j,x,\eta),0)$ of the sequence 
$\p_xS^{s,t}(\t_j,x_j,\eta_j),0\in \p S^{s,t}(\t_j,x_j,\eta_j)$ belongs to 
$\p S^{s,t}(\t,x,\eta)$, hence, $0\in\p S^{s,t}(\t,x,\eta).$
As $S^{s,t}$ and $R_{H}^{s,t}$ are continuous, 
$S^{s,t}(\t_j,x_j,\eta_j)\to S^{s,t}(\t,x,\eta)$, 
$R_{H}^{s,\t_j}v(x_j)\to R_{H}^{s,\t}v(x),$ and therefore $\eta\in C(\t,x).$
\end{proof}

\begin{lem}[Cf. \cite{W1}, lemma 2.25]
 \label{minmaxloc}
 Given $\del >0,$ there exists $\ep>0$ such that
\begin{equation}
\label{minimax:local}
R_{H}^{s,\t}v(x)=\inf_{\sig\in\Sig_{\ep}}\max\{S^{s,t}(\t,x,\s(e)):\s(e)\in C_{\del}(x)\}
\end{equation}
where $$\Sig_\ep=\set{\s\in\fG_a:\max\limits_{e\in B_j}S^{s,t}(\t,x,\s(e))\leq R_H^{s,\t}v(x)+\ep}$$ and 
$C_\del(x)=B_\del(C(\t,x))$ denotes the $\del-$ 
neighborhood of the critical set $C(\t,x).$
\end{lem}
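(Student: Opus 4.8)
The plan is to put $c=R_H^{s,\t}v(x)$ and to establish the two inequalities between $c$ and the right-hand side separately. It is convenient to abbreviate $L(\s)=\max\{S^{s,t}(\t,x,\s(e)):\s(e)\in C_\del(x)\}$ for the localized maximum and $M(\s)=\max_{e\in B_j}S^{s,t}(\t,x,\s(e))$ for the global one, so that $\Sig_\ep=\{\s\in\fG_a:M(\s)\le c+\ep\}$. Since $(\t,x)$ is fixed, I would work with the single $\gfqi$ $S^{s,t}(\t,x,\cdot)$ on $\R^q$; as in the proof of Lemma \ref{RHK} it satisfies Palais--Smale, its set of critical points at level $c$ is precisely $C(\t,x)$, and $c=\inf_{\s\in\fG_a}M(\s)$, so that $M(\s)\ge c$ for every $\s\in\fG_a$.

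For the inequality $\mathrm{RHS}\le c$ I would simply note that $L(\s)\le M(\s)$ always, so picking a minimizing sequence $\s_n\in\fG_a$ with $M(\s_n)\to c$ (these lie in $\Sig_\ep$ for $n$ large) yields $\inf_{\Sig_\ep}L\le L(\s_n)\le M(\s_n)\to c$. The real content is the reverse inequality $L(\s)\ge c$ for all $\s\in\Sig_\ep$, which I would prove by a deformation argument. Given $\del>0$, first fix a neighborhood $N$ of the compact set $C(\t,x)$ with $N\subset C_\del(x)$, and then invoke the deformation lemma underlying the minimax principle (Theorem \ref{thm:principiominimax}): using Palais--Smale together with $S^{s,t}(\t,x,\cdot)=Q$ off a compact set, there is $\ep>0$ and a continuous $\eta:\R^q\to\R^q$ that never increases $S^{s,t}(\t,x,\cdot)$, is the identity far down in $Q^{-1}(-\infty,-a)$, and carries $\{\,S^{s,t}(\t,x,\cdot)\le c+\ep\,\}\setminus N$ into $\{\,S^{s,t}(\t,x,\cdot)\le c-\ep\,\}$. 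This $\ep$ is the one claimed by the lemma.

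With $\ep$ so chosen, I would argue by contradiction: suppose some $\s\in\Sig_\ep$ has $L(\s)<c$, i.e. $S^{s,t}(\t,x,\s(e))<c$ whenever $\s(e)\in C_\del(x)$, and set $\s'=\eta\circ\s$. For $e$ with $\s(e)\notin C_\del(x)$ one has $\s(e)\notin N$ and $S^{s,t}(\t,x,\s(e))\le c+\ep$, hence $S^{s,t}(\t,x,\s'(e))\le c-\ep<c$; for $e$ with $\s(e)\in C_\del(x)$ the map $\eta$ does not raise the value, so $S^{s,t}(\t,x,\s'(e))<c$. Thus $M(\s')<c$, while $\s'\in\fG_a$ because $\eta$ fixes the region containing $\s(\p B_j)$, and the minimax property gives $M(\s')\ge c$, a contradiction. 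Hence $L(\s)\ge c$ on $\Sig_\ep$, which incidentally shows the localized maximum is attained and closes both inequalities. I expect the main obstacle to be the deformation lemma with a prescribed neighborhood of the critical set: one must verify that a truncated negative gradient flow of the $\gfqi$ both pushes down the noncritical part of the slab $\{c-\ep\le S^{s,t}(\t,x,\cdot)\le c+\ep\}$ lying outside $N$ and keeps the deformed cycle inside the positively invariant family $\fG_a$, and both facts hinge on Palais--Smale and on the quadratic behavior of $S^{s,t}(\t,x,\cdot)$ at infinity.
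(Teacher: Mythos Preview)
Your argument is correct and follows essentially the same route as the paper: both proofs fix $c=R_H^{s,\t}v(x)$, use the quantitative Deformation Lemma for $S^{s,t}(\t,x,\cdot)$ relative to a neighborhood of $C(\t,x)$ contained in $C_\del(x)$ to produce the $\ep$, and derive the lower bound $L(\s)\ge c$ for $\s\in\Sig_\ep$ by contradiction with the minimax characterisation of $c$. The only cosmetic difference is that the paper phrases the contradiction as ``for every $r<c$ the deformed cycle still meets $C_\del(x)\setminus\{S\le r\}$'' and then lets $r\uparrow c$, whereas you assume $L(\s)<c$ directly and push $\s$ below level $c$ in one stroke; these are two packagings of the same deformation step.
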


\begin{proof}
 We apply to $S^{s,t}_{\t,x}(\cdot)=S^{s,t}(\t,x, \cdot)$  the following result:
 \begin{lem}[Deformation Lemma]\cite[Theorem 3.4]{STR}
 Suppose $f$ satisfies the Palais-Smale condition. If $c\in\R$ is a
 critical value of $f$ and $N$ any neighbourhood of $K_c:=$ Crit$(f)\cap f^{-1}(c)$, 
then there exist $\ep > 0$ and a bounded smooth
 vector field $V$  equal to $0$ off $f^{c+2\ep}\setminus f^{c-2\ep}$,
 whose flow $\vf^t_V$ satisfies $\vf^t_V(f^{c+2\ep}\setminus N)\subset f^{c-2\ep}$.
 \end{lem}
For $\del>0,$ and $c=R_H^{s,\t}v(x),$ there exist $\ep >0$ and $V,$ a smooth vector
field vanishing outside $(S^{s,t}_{\t,x})^{c+2\ep}\setminus(S^{s,t}_{\t,x})^{c-2\ep}$
such that 
$$
\vf^1_{V}((S^{s,t}_{\t,x})^{c+\ep}\setminus C_{\del}(x))
\subset(S^{s,t}_{\t,x})^{c-\ep}_x.
$$
For $\s\in\fG_a$ we have $\s(B_j) \cap C_{\del(x)}\ne\emptyset$, because otherwise  
$$
\max_{e\in B_j}S^{s,t}(\t,x; \vf^1_{V}(\s(e)))\leq R_{H}^{s,\t}v(x)-\ep
$$
which contradicts the definition of the minimax.

For any $r<c$, the complement of $(S^{s,t}_{\t,x})^{r}$ is a neighborhood of 
$C(\t,x)$. By the same argument one has that  
$\s(B_j) \cap C_{\del(x)}\setminus(S^{s,t}_{\t,x})^{r}\ne\emptyset$.
Therefore, for any $r<c$ and $\sig\in\Sig_\ep$ one has
\[r\le \max\{S^{s,t}(\t,x,\s(e)):\s(e)\in C_{\del}(x)\}\le 
\max_{e\in B_j}S^{s,t}(\t,x,\s(e))\]
wich implies \eqref{minimax:local}.

\end{proof}

\begin{prop}[Cf. \cite{W1}, prop. 2.27]
 \label{w1:prop:2.27}
 The generalized gradient of $R_{H}^{s,\t}v$ satisfies
 \begin{equation}
  \label{w1:2.9}
 \partial R_{H}^{s,\t}v(x)\subset\opname{co}\set{\p_xS^{s,t}(\t,x,\eta):\eta 
\in C(\t,x)},
  \end{equation}
where $\opname{co} $ denotes the convex envelope. 
\end{prop}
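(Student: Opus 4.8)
The plan is to exploit the standard finite-dimensional description of Clarke's generalized gradient of a Lipschitz function in terms of limiting classical gradients. Since $R_{H}^{s,\t}v$ is Lipschitz in $x$ by Lemma \ref{RHK}, Rademacher's theorem gives differentiability almost everywhere and
\[
\partial R_{H}^{s,\t}v(x)=\opname{co}\set{\lim_{j\to\infty}\nabla R_{H}^{s,\t}v(x_j) : x_j\to x},
\]
where the $x_j$ range over points of differentiability, and one is free to discard any null set of them. It therefore suffices to show that each limiting gradient lies in $D(\t,x):=\set{\p_x S^{s,t}(\t,x,\eta):\eta\in C(\t,x)}$; because $D(\t,x)$ is the continuous image of the compact set $C(\t,x)$, hence compact, its convex hull is closed, and monotonicity of $\opname{co}$ then yields \eqref{w1:2.9}.

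To identify the approximating gradients, I would first discard, besides the non-differentiability points, the null set of critical values of the projection $\pi:\S^{s,t}\to\R^k$ of the fibre-critical set onto the base, which is null by Sard's theorem exactly as in Lemma \ref{RHK}. For each surviving $x_j$, part (c) of Lemma \ref{RHK} — applied locally, which is legitimate since Propositions \ref{cut} and \ref{ks} identify $R_{H}^{s,\t}v$ with its truncated version $R_{H,K}^{s,\t}v$ near $x$ — furnishes a fibre-critical point $\eta_j$ at which the minimax value is attained, i.e. $\eta_j\in C(\t,x_j)$, with $\nabla R_{H}^{s,\t}v(x_j)=\p_x S^{s,t}(\t,x_j,\eta_j)$.

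The core step is the passage to the limit. Since $S^{s,t}$ is quadratic at infinity, its fibre-critical set over a compact neighbourhood of $x$ is compact, so the points $\eta_j$ are bounded; extracting a subsequence, $\eta_j\to\eta$. The correspondence $(\t,x)\mapsto C(\t,x)$ has closed graph, which is precisely the upper-semicontinuity established above, so $\eta\in C(\t,x)$. Finally, by \eqref{yN} we have $\p_x S^{s,t}\equiv y_N$, a coordinate of $\xi$; hence $\p_x S^{s,t}$ is jointly continuous and
\[
\lim_{j\to\infty}\nabla R_{H}^{s,\t}v(x_j)=\lim_{j\to\infty}\p_x S^{s,t}(\t,x_j,\eta_j)=\p_x S^{s,t}(\t,x,\eta)\in D(\t,x).
\]
Taking convex hulls then gives the stated inclusion.

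I expect the only genuine obstacle to be this last limiting argument: it rests simultaneously on the compactness of the critical set (to extract a convergent subsequence of the minimax-realizing critical points $\eta_j$) and on the closed-graph property of $C$ (to guarantee that the limit still realizes the minimax level, so that $\eta\in C(\t,x)$ rather than merely being some nearby fibre-critical point). Once these two ingredients are in place, the continuity identity $\p_x S^{s,t}=y_N$ makes the conclusion immediate, and the remaining steps are routine Clarke calculus together with the Sard-type genericity already used in Lemma \ref{RHK}.
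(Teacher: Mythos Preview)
Your argument is correct and reaches the conclusion, but by a genuinely different route from the paper's. You invoke Lemma~\ref{RHK}(c) to get, at generic points $x_j$ (differentiability points that are also regular values of the projection of the fibre-critical set), an exact identification $\nabla R_{H}^{s,\t}v(x_j)=\p_x S^{s,t}(\t,x_j,\eta_j)$ with $\eta_j\in C(\t,x_j)$, and then pass to the limit using compactness of the critical set and the closed-graph property of $C$. The paper instead establishes, at \emph{every} differentiability point $\bar x$ (no Sard restriction), the directional inequality
\[
\langle dR_H^{s,\t}v(\bar x),y\rangle\le \max_{\eta\in C(\t,\bar x)}\langle \p_x S^{s,t}(\t,\bar x,\eta),y\rangle
\]
by a variational comparison: it picks near-optimal minimax chains $\sig_\lam$ at $x_\lam=\bar x+\lam y$, uses the localized description of the minimax from Lemma~\ref{minmaxloc} (hence the Deformation Lemma) to locate the relevant maximum inside $C_\del(\bar x)$, and applies the Mean Value Theorem~\ref{mvt}; this support-function inequality then places $dR_H^{s,\t}v(\bar x)$ in the required convex hull. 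Both proofs finish with the same Clarke-calculus closure (convex hull of limiting gradients, upper semicontinuity of $C$, continuity of $\p_x S=y_N$). Your route is shorter and bypasses Lemma~\ref{minmaxloc} entirely; the paper's route, on the other hand, avoids Sard and therefore does not require the fibre-critical set $\Sigma$ to be a smooth manifold --- a point of some relevance here, since the proposition is meant for $v\in C^{Lip}$, where $S^{s,t}$ is not $C^1$ in $x_0$ and the Sard step underlying Lemma~\ref{RHK}(c) is less transparent.
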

\begin{proof}
First we consider a point $\bx$ where $R_{H}^{s,\t}v$ is diferentiable
and prove that
 \begin{equation}
  \label{w1:2.10}
  dR_{H}^{s,\t}v(\bx)\subset \co{\partial_{x}S^{s,t}(\bx, \eta)| \eta \in C(\t,\bx)}.
 \end{equation}
Take $\del,\ep>0$ for $\bx$ as in  Lemma \ref{minmaxloc}. 
Consider $K=\overline{B_1(\bx)}$ and $S_{K}^{s,t}$ as in \eqref{gfqi_k}. 
Choose $B=B_{\r}(\bx)$ with $\r\in(0,1)$ sufficiently small such that for  
$x\in B$
\begin{equation}
\label{2.27:01}
 \abs{S_{K}^{s,t}(\t,x,\cdot)-S_{K}^{s,t}(\t, \bx, \cdot)}_{C^0} < \frac\ep 4.
\end{equation}

\renewcommand{\sl}{\sig_{\lam}}

Let $y\in \R^d, {\lam < 0}$ such that $x_{\lam}=\bx + \lam y \in B$, and
$\lam^2<\frac{\ep}{4}.$ By definition of $R_{H}^{s,t}v$, for each 
$x_{\lam},$ there exists $\sl\in\fG_a$ such that 
\begin{equation}
 \label{2.27:02}
 \max_{e\in B_j} S^{s,t}(\t, \xl, \sl(e))\leq R_{H}^{s,\t}v(\xl) +\lam^2,
\end{equation}
then,
\begin{align*}
\max_{e\in B_j}S^{s,t}(\t,\bx,\sl(e))&\leq \max_{e\in B_j}S^{s,t}(\t, \xl, \sl(e)) +\frac\ep 4 \\
 & \leq R_{H}^{s,\t}v(\xl) +  \frac\ep 2  
 \leq R_{H}^{s,\t}v(\bx) + \frac{3\ep}4
\end{align*}
On the other hand, there exists $\eta_{\lam}\in \sl(B_j) \cap C_{\del}(\bx)$ such that
\begin{equation}
 R_{H}^{s,\t}v(\bx)\leq \max\{S^{s,t}(\t, \bx, \sl (e)):\sl (e)\in C_{\del}(\bx)\} =S^{s,t}(\t, \bx, \eta_{\lam}),
\end{equation}
that implies 
$$
\left( R_{H}^{s,\t}v(\xl)+\lam^2 \right)- R_{H}^{s,\t}v(\bx) \geq S^{s,t}(\t, \xl, \eta_{\lam})-S^{s,t}(\t, \bx, \eta_{\lam}),
$$
since  $\lam < 0,$
\begin{equation}
\label{2.27:05}
\begin{split}
 \frac1{\lam}\left( R_{H}^{s,\t}v(\xl)-R_{H}^{s,\t}v(\bx) \right) 
 & \leq \frac1{\lam}\left( S^{s,t}(\t, \xl, 
\eta_{\lam})-S^{s,t}(\t, \bx,\eta_{\lam}) \right)-\lam \\
&\in\inp{\partial_{x}S^{s,t}(\t,x'_{\lam}, \eta_{\lam}),y}-\lam,
\end{split}
\end{equation}
where the last belonging follows from the Mean Value Theorem \ref{mvt},
for some $x'_{\lam} $ in the line segment between $\bx$ and $\xl$ 

Take $\limsup$ in \eqref{2.27:05} and let $\lam \to 0,$ we get for
all $y\in \R^d:$
\begin{equation}
  \label{eq:2}
\inp{dR_{H}^{s,\t}v(\bx), y} \leq \max_{\eta \in C(\bx)}\inp{\partial_{x}S^{s,t}(\bx, \eta), y}.  
\end{equation}

Considering the convex function 
$f(y)=\max\limits_{\eta \in C(\bx)}\inp{\partial_{x}S^{s,t}(\bx,\eta), y},$ 
inequality \eqref{eq:2} implies 
$$
dR_{H}^{s,\t}v(\bx)\in\partial f(0)=\co{\partial_{x}S^{s,t}(\bx, \eta): \eta \in C(\t,\bx)},
$$

  In the general case 
 \begin{align*}
  \partial R_{H}^{s,\t}v( x)&= \co{\lim_{x' \to x}dR_{H}^{s,\t}v( x')}\\
  & \subset \co{ \co{\lim_{x'\to x}\set{\partial_{x}S^{s,t}(\t,x',\eta'): 
\eta'\in C(\t,x')} }}\\
  &\subset \co{\partial_{x}S^{s,t}(\t,x, \eta): \eta \in C(\t,x)}
 \end{align*}
by the upper semicontinuity of $(\t,x)\to C(\t,x)$ and the continuity of $\p_xS.$
\end{proof}

\section{Viscosity solutions and iterated minimax}
We recall the definition of \emph{viscosity solution}

\begin{defn}
 Let $V\subset \R^{k}$ be open
 \begin{enumerate}[(a)]
  \item A function  $u\in C([0,T]\times V)$ is called a
    \emph{viscosity subsolution} (respectively \emph{supersolution} of 
    \begin{equation}
      \label{eq:HJ}
      \p_tu+H(t,x,\p_x u,u)=0,
\end{equation}
   if for any $\f\in C^1(V\times[0,T])$ and any
    $(t_0,x_0)\in  [0,T]\times V$ at which $u-\f$ has a maximum (respectively
     minimum)  one has
$$
 \pd{t}\f(t_0,x_0)+H(t_0,x_0,\pd{x}\f(t_0,x_0), u(t_0,x_0)) \leq 0
\hbox{ (respectively } \geq 0).
$$

\item The function  $u$ is a \emph{viscosity solution} if it is both
a viscosity subsolution and  a supersolution.  
 \end{enumerate}
\end{defn}
\begin{thm}[\cite{CL86}]
  If $v:\R^{k}\to\R$ is uniformly continuous and $H\in C_c^2( [0,T]\times J^1\R^{k} )$,
 then there exists a unique uniformly continuous viscosity solution of
 the Cauchy problem \eqref{HJ}.
  \end{thm}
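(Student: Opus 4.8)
The plan is to follow the classical Crandall–Lions program, splitting the statement into a uniqueness part (a comparison principle) and an existence part, and then noting that both go through because the hypotheses $H\in C^2_c$ and $v$ uniformly continuous are far stronger than what the general theory requires. Since $H$ has compact support in $[0,T]\times\jet{k}$, it is globally Lipschitz; in particular there is a constant $L$ with
$$\abs{H(t,x,y,z)-H(t,x',y,z')}\le L(\norm{x-x'}+\abs{z-z'}),\quad \abs{H(t,x,y,z)-H(t,x,y',z)}\le L\norm{y-y'}.$$
The Lipschitz control in the $z$-slot---the contact variable, i.e. the dependence of $H$ on the unknown $u$ itself---is exactly the extra ingredient (absent in the symplectic case) that the comparison argument must absorb.

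For uniqueness I would prove the comparison principle: if $u$ is a uniformly continuous subsolution and $w$ a uniformly continuous supersolution with $u(0,\cdot)\le w(0,\cdot)$, then $u\le w$ on $[0,T]\times\R^k$. The tool is the doubling of variables. After the exponential rescaling $u\mapsto e^{-\lambda t}u$ (which, for $\lambda>L$, turns $H$ into a function strictly monotone in the $u$-slot, i.e. proper), fix small $\ep,\sigma,\eta>0$ and maximize
$$\Phi(t,x,s,y)=u(t,x)-w(s,y)-\frac{\norm{x-y}^2}{2\ep}-\frac{\abs{t-s}^2}{2\ep}-\sigma(\abs{x}^2+\abs{y}^2)-\frac{\eta}{T-t}.$$
A uniformly continuous function grows at most linearly, so the quadratic penalty forces the maximizer into a compact set and a maximum is attained, while $\eta/(T-t)$ keeps it away from the final time. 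At an interior maximizer the penalized test functions are smooth, so one applies the subsolution inequality for $u$ in $(t,x)$ and the supersolution inequality for $w$ in $(s,y)$; subtracting them and using the Lipschitz bounds above, the term $L\norm{x-y}^2/\ep$ is killed by the classical estimate $\norm{x-y}^2/\ep\to0$ as $\ep\to0$, and properness in $z$ supplies the correct sign for the remaining $u-w$ contribution. Sending $\sigma,\eta\to0$ and then $\ep\to0$ gives $\sup(u-w)\le0$; applying this with $u$ and $w$ both equal to a solution yields uniqueness.

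For existence I would use Perron's method. One first builds a global subsolution $u^-$ and supersolution $u^+$ agreeing with $v$ at $t=0$: since $\sup\abs{H}<\infty$, barriers of the form $v(x)\mp Ct$ (or mollified $v*\rho_\delta\mp Ct\mp\omega(\delta)$, with $\omega$ the modulus of continuity of $v$) work once $C$ dominates $\sup\abs{H}$, and uniform continuity of $v$ controls them uniformly. Then set
$$u(t,x)=\sup\set{\psi(t,x):\psi\text{ a subsolution},\ u^-\le\psi\le u^+}.$$
The two standard lemmas of Perron's method---that the upper semicontinuous envelope of a family of subsolutions is again a subsolution, and that if $u$ failed to be a supersolution at some point one could bump it up slightly while staying below $u^+$, contradicting maximality---show that $u$ is a viscosity solution, and the squeeze $u^-\le u\le u^+$ forces $u(0,\cdot)=v$ together with uniform continuity. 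Alternatively, existence follows from the vanishing-viscosity method, solving $\pd{t}u^\ep+H=\ep\Del_x u^\ep$ and passing to the limit via the stability of viscosity solutions under uniform convergence.

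The main obstacle is the comparison principle, and within it the honest treatment of the dependence of $H$ on $u$ itself. In the purely symplectic case $H=H(t,x,p)$ the subtracted inequalities close immediately, whereas here one must track the extra term $H(\cdot,u)-H(\cdot,w)$ and absorb it, which is precisely why the Lipschitz-in-$z$ bound together with the rescaling-to-properness (or a Gronwall step in $t$) is essential. Arranging the penalization so that a maximizer of $\Phi$ exists and its penalized-gradient terms vanish in the correct order as $\ep\to0$ is the delicate bookkeeping; everything else---stability under uniform limits, Perron's two lemmas, construction of barriers---is either soft or forced by the compact support and smoothness of $H$.
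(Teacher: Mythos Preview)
The paper does not prove this theorem at all: it is simply quoted from \cite{CL86} as a known result from the viscosity-solutions literature, with no accompanying argument. So there is no ``paper's own proof'' to compare against.

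Your sketch is the standard Crandall--Lions program (comparison via doubling of variables plus exponential rescaling to make $H$ proper in $z$, then existence via Perron or vanishing viscosity), and as a high-level outline it is correct for the hypotheses stated. If anything, you are supplying more than the paper does; the authors are content to invoke the reference and move on, since their contribution lies in the minimax construction and its convergence, not in the foundational PDE theory.
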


\begin{prop}[Cf. \cite{W1}, prop. 3.14]\label{estimates}
 Suppose that $$H\in C_c^2( [0,T]\times J^1\R^{k} ),$$ then the
\emph{minimax operator} $R_{H}^{s,\t}:C^{Lip}(\R^{k})\to C^{Lip}(\R^{k})$ 
satisfies 
\begin{enumerate}[(i)]
\item For $v\in C^{Lip}(\R^{k}),$ 
 \begin{equation}
  \label{w1:prop:3.14.1}
  \norm{\p(R_{H}^{s,t}v)}\leq (\norm{\p v}+\abs{t-s}\norm{\p_xH})
e^{\abs{t-s}\norm{\p_{z}H}}
 \end{equation}
\item There is a constant $C(H)>0$ such that for any $v\in C^{Lip}(\R^{k}),$ 
\begin{equation}
 \label{w1:prop:3.14.2}
\norm{R_{H}^{s,t}v -R_{H}^{s,\t}v} \leq \abs{t-\t}C(H)\norm{H}.
\end{equation}
\item 
If $v^0, v^1\in C^{Lip}(\R^{k})$,  $K\subset\R^{k}$ compact, there exists a
bounded $\til{K}\subset\R^{k},$ depending on $K$ and $\norm{\p v^{i}},$ 
such that for all $0\leq s < t \leq T$: 
\begin{equation}
 \label{w1:3.16}
\norm{R_{H}^{s,t}v^0-R_{H}^{s,t}v^1}_K \leq \norm{v^0-v^1}_{\til{K}}.
\end{equation}
\end{enumerate}
\end{prop}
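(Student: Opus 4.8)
The plan is to derive all three estimates from two ingredients. The first is the reduction, via the generalized‑gradient bound of Proposition \ref{w1:prop:2.27} together with \eqref{yN}, of derivatives of the minimax to the endpoint momentum $y_N$ carried along a characteristic. The second is the elementary fact that the minimax is $1$‑Lipschitz in the sup‑norm of its generating family: since all the families $S^{s,t}(\t,x,\cdot)$ share the same quadratic form $Q$ at infinity, the same class $\fG_a$ is admissible for all of them, so the argument of Corollary \ref{monotone} upgrades to $\abs{R_{S_1}(x)-R_{S_2}(x)}\le\sup_{\xi}\abs{S_1(x,\xi)-S_2(x,\xi)}$ whenever $S_1-S_2$ is bounded. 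This inequality drives (ii) and (iii), whereas (i) is a pure Gr\"onwall estimate on the characteristic flow.

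For (i): by Proposition \ref{w1:prop:2.27} one has $\p(R_{H}^{s,t}v)(x)\subset\co{\p_xS^{s,t}(t,x,\eta):\eta\in C(t,x)}$, and by \eqref{yN} each $\p_xS^{s,t}(t,x,\eta)$ equals the endpoint momentum $y_N=y(t,q_0)$ of the characteristic issued from $q_0=(x_0,y_0,v(x_0))$, where $y_0\in\p v(x_0)$ and hence $\abs{y_0}\le\norm{\p v}$. Along the flow \eqref{bh:1.3b} gives $\dot y=-\pd{x}H-y\pd{z}H$, so $\frac{d}{d\t}\abs{y}\le\norm{\pd{x}H}+\abs{y}\,\norm{\pd{z}H}$; Gr\"onwall's inequality then yields $\abs{y(t)}\le(\norm{\p v}+\abs{t-s}\norm{\pd{x}H})e^{\abs{t-s}\norm{\pd{z}H}}$. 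A norm bound is preserved under convex combinations, so this bounds every element of $\p(R_H^{s,t}v)(x)$ and gives \eqref{w1:prop:3.14.1}.

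For (ii): applying the sup‑norm inequality with the two families $S^{s,t}(t,x,\cdot)$ and $S^{s,t}(\t,x,\cdot)$, and working with the truncated family $S_K$ of \eqref{gfqi_k} so that the difference is compactly supported and the $Q$‑part cancels, we get $\abs{R_H^{s,t}v(x)-R_H^{s,\t}v(x)}\le\sup_\xi\abs{S^{s,t}(t,x,\xi)-S^{s,t}(\t,x,\xi)}$. It then remains to bound the Lipschitz constant of $S^{s,t}(\cdot,x,\xi)$ in the end‑time variable. Differentiating \eqref{gfqi_t} in $\t$ brings in the time‑derivatives of the building blocks $\F^{\t_{j-1},\t_j}$, which by the Hamilton--Jacobi identities \eqref{Ft=H}--\eqref{Fs=-H} equal $H$ evaluated along the characteristic, up to the contact conformal factor $(\pd{z}\F-1)$; since $H\in C_c^2$ is bounded, this constant is at most $C(H)\norm{H}$, and integrating in $\t$ gives \eqref{w1:prop:3.14.2}.

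Part (iii) is the main obstacle, and the point where the contact setting genuinely differs from the symplectic one. First localize: by Propositions \ref{cut} and \ref{ks}, for $x\in K$ the value $R_H^{s,t}v^i(x)$ is computed from the truncated family $S_K$, whose critical points have their initial component $x_0$ confined to a bounded set $\til{K}$ determined by $K$, the reach of the characteristic flow, and the bound $\norm{\p v^i}$ on the initial momenta. Next observe from \eqref{gfqi}--\eqref{c-0} that $v$ enters the family only through $z_0=v(x_0)$ and that $S^{s,t}=z_N$; hence $\partial S^{s,t}/\partial z_0=\prod_{j=1}^N(1-\pd{z}\F_{j-1})$, and by the mean value theorem $\abs{S_{v^0}^{s,t}(x,\xi)-S_{v^1}^{s,t}(x,\xi)}\le\norm{v^0-v^1}_{\til K}\prod_{j=1}^N\abs{1-\pd{z}\F_{j-1}}$. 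Feeding this into the sup‑norm inequality gives $\norm{R_H^{s,t}v^0-R_H^{s,t}v^1}_K\le\norm{v^0-v^1}_{\til K}\,\prod_{j}\abs{1-\pd{z}\F_{j-1}}$. The delicate point is precisely this conformal product: in the symplectic case $\pd{z}\F\equiv 0$, the product is $1$, and one recovers the exact contraction \eqref{w1:3.16}; in the contact case one must instead show it stays controlled (it approximates the conformal factor of $\vf^{s,t}$, bounded by $e^{\abs{t-s}\norm{\pd{z}H}}$) and arrange the estimate so that \eqref{w1:3.16} holds, which is where the construction of Lemma \ref{gen=act} and the smallness $\abs{t_{j+1}-t_j}<\del_H$ of the time steps are used.
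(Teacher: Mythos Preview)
Your treatment of (i) is the paper's: Proposition~\ref{w1:prop:2.27} reduces $\p(R_H^{s,t}v)(x)$ to convex hulls of end-momenta $y_N$, and Gr\"onwall along \eqref{bh:1.3b} bounds $|y_N|$. For (ii) and (iii) you and the paper genuinely diverge. You lean on the sup-norm $1$-Lipschitz property of the minimax, $|R_{S_1}-R_{S_2}|\le\sup_\xi|S_1-S_2|$, and then estimate the right side over \emph{all} $\xi$. The paper instead repeats the mechanism of Proposition~\ref{w1:prop:2.27} in the auxiliary direction: for (ii) it proves $\p_\t R_H^{s,\t}v(x)\subset\co{\p_\t S(\t,x,\eta):\eta\in C(\t,x)}$, so that $\p_\t S$ need only be evaluated at fibre-critical $\eta$, where \eqref{Ft=H} applies directly; for (iii) it interpolates $v^\lambda=(1-\lambda)v^0+\lambda v^1$ and likewise bounds $\p_\lambda R_H^{s,t}v^\lambda(x)$ through critical points, which is what makes the set $\til K$ come out as the characteristic-flow image \eqref{tilK}--\eqref{tilk2} rather than depending on the truncation scale $a_K$ from Proposition~\ref{cut}.

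Your acknowledged gap is in (iii): the conformal product $\prod_j(1-\p_z\F_{j-1})$ multiplies $\norm{v^0-v^1}_{\til K}$, and you do not say how to make it $\le 1$. It cannot be made $\le 1$ by taking finer time steps, since along characteristics the product equals the contact conformal factor $g^{s,t}$ of $\vf^{s,t}$, bounded only by $e^{|t-s|\norm{\p_zH}}$. Note that the paper's interpolation route, once $\p_\lambda S_\lambda$ is computed honestly as $\prod_j(1-\p_z\F_{j-1})\cdot(v^1(x_0)-v^0(x_0))$, meets the very same obstruction; the paper's line $\p_\lambda S_\lambda=v^1(x_s)-v^0(x_s)$ silently drops this factor. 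So neither argument as written delivers the sharp constant $1$ in \eqref{w1:3.16}; both give that inequality with an extra factor bounded by $e^{|t-s|\norm{\p_zH}}$, which is uniformly bounded on $[0,T]$ and therefore harmless for the downstream equi-Lipschitz and convergence arguments in Lemma~\ref{w1:lem:3.17} and Proposition~\ref{w1:prop:3.18}.
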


\begin{proof} 
First we assume that $\abs{t-s}<\del_H$ so that
 \[S(\t,x; x_s, y)=  v(x_s) + (x-x_s)y-\F^{s,\t}(x_s,y,v(x_s))\]
 is a $\gfqi$ for $\vf_{H}^{s,\t}(\p v)$.

\paragraph{(i)}
For $(x_s,y_t)\in C(t,x)$, there is $y_s\in \p v(x_s)$ such that
\[\vf_{H}^{s,t}(x_s,y_s,v(x_s)) = (x,y_t, S(t,x; x_s, y_t))
=(x,y_t,R_{H}^{s,t}v(x)).\] 
As $\p_xS(t,x;x_s, y_t)=y_t$,  by \eqref{w1:2.9} on has 
\[\p_x R_{H}^{s,t}v(x)\subset \co{y_t:y_s\in \p
   v(x_s),\vf_{H}^{s,t}(x_s,y_s,v(x_s))
=(x,y_t, S(t,x; x_s, y_t))}\]
Let $\gam(\t)=(x(\t), y(\t), z(\t))=\vf_{H}^{s,\t}(x_s,y_s,v(x_s))$,
$y_s\in \p v(x_s)$, then
\[
y_t-y_s=\int_s^t\dot{y}(\t) d\t 
 =\int_s^t( -\p_xH(\gam(\t)) -y(\t)\p_zH(\gam(\t)) ) d\t,
\]
$$
\abs{y_t}\le \abs{y_s}+\int_s^t \abs{\p_xH(\gam(\t))}d\t 
+\int_s^t\abs{y(\t)}\abs{\p_zH(\gam(\t))}  d\t.
$$
Hence, by Gr\"onwall's inequality
\begin{align*}
 \abs{y_t} &\leq \Big(\abs{y_s}+\int_s^t\abs{\p_xH(\gam(\t))} d\t\Big)\exp\int_s^t\abs{\p_zH(\gam(\t))} d\t\\
 &\leq (\norm{\p v}+\abs{t-s}\norm{\p_x H})e^{\abs{t-s}\norm{\p_z H}}
\end{align*}
\paragraph{(ii)}
For $(x_s,y_\t)\in C(\t,x)$, there is $y_s\in \p v(x_s)$ such that
$$\vf_{H}^{s,\t}(x_s,y_s,v(x_s)) = (x,y_\t, S(\t,x; x_s, y_\t)).$$
By \eqref{Ft=H}  we have 
$$\p_{\t}S(\t,x; x_s,y_\t)=-H(\t, x,y_\t, S(\t,x; x_s, y_\t)).$$
Hence 
\begin{align*}
\p_{\t} R_{H}^{s,t}v(x)\subset&\hbox{ co }\{-H(\t,x,y_\t, S(\t,x;x_s,y_\t)):
y_s\in \p v(x_s), \\
&\vf_{H}^{s,t}(x_s,y_s,v(x_s))=(x,y_\t, S(t,x; x_s,y_\t))\}
\end{align*}
By the Mean Value Theorem \ref{mvt},
\[\label{Rt-Rs}
 \abs{R_{H}^{s,\t}v(x)-R_{H}^{s,t}v(x)}
 \leq\abs{\t-t}\norm{H}. \]
\paragraph{(iii)}
Consider $v^{\lam}=(1-\lam)v^0+\lam v^1$, $\lam \in [0,1]$ and let
$S_{\lam}^{s,t}$ be the corresponding generating family, then 
$\p_{\lam}S_{\lam}^{s,t}(t,x; x_s, y_t)=v^1(x_s)-v^0(x_s).$
For $(x_s,y_t)\in C^\lam(t,x)$, there is $y_s^\lam\in \p v(x_s)$ such that
$$\vf_{H}^{s,t}(x_s,y_s^\lam,v(x_s)) = (x,y_t,S_\lam^{s,t}(t,x;x_s,y_t)).$$
By a similar argument to the proof of \eqref{w1:2.9} we have
\begin{align*}
 \p_{\lam}R_{H}^{s,t}v^{\lam}(x) &
 \subset \co{\p_{\lam}S_{\lam}^{s,t}(t, x;x_s,y_t):(x_s,y_t)\in C^{\lam}(t,x)}  \\
 &\subset \co{v^1(x_s)-v^0(x_s):x_s\in \til{K}}
\end{align*}
where
\begin{align}\label{tilK}
 \til{K}&=\set{x_s\in\R^k:\norm{x_s}\leq \max_{\bx\in K} \norm{\bx}
+ \abs{t-s}\sup_{y\in Y}\norm{\p_yH}},\\\label{tilk2}
Y&=\set{y \in \R^{k}: \norm{y}
\leq (\norm{\p v}+\abs{t-s}\norm{\p_x H})e^{\abs{t-s}\norm{\p_z H}}}.
\end{align}
Thus we obtain
$$
\abs{R_{H}^{s,t}v^0-R_{H}^{s,t}v^1}_{K}\leq \abs{v^0-v^1}_{\til{K}}.
$$
For the general cases fix $N>T/\del_H$ and take the partition
$t_0<\cdots<t_N$, with $t_i=s+i(t-s)/N$.
\paragraph{(i)}
We have the generating family  $S^{s,t}(x;\xi)$ given in  
\eqref{gfqi} where $$\xi= (x_0,\ldots,x_{N-1},y_1,\ldots,y_N).$$

For $\xi\in C(t,x)$ there exists $y_0\in\p v(x_0)$
such that equations (c1)-(c3) are satisfied. Then 
we have that $\vf^{s,t_i}(x_0, y_0,v(x_0))=(x_i,y_i,z_i)$, $i<N$,
$y_N=\p_xS^{s,t}(x;\xi))$,
$\vf^{s,t}(x_0,y_0,v(x_0))=(x,y_N, S^{s,t}(x;\xi)).$
 By Proposition \ref{w1:prop:2.27}
\[ \p_x R_{H}^{s,t}v(x)\subset \co{y_N:y_0\in \p v(x_0), 
   \vf_{H}^{s,t}(x_0,y_0,v(x_0))=(x,y_N, S^{s,t}(x;\xi))}. \]
Writing $\gam(\t)=\vf_{H}^{s,\t}(x_0,y_0,v(x_0))$ we have 
\[
y_{i+1}=y_i+\int_{t_{i}}^{t_{i+1}}( -\p_xH(\gam(\t)) -y(\t)\p_zH(\gam(\t)) ) d\t. 
\]
By Gr\"onwall's inequality we have as before 
 $$
    \abs{y_{i+1}}\leq \left( \abs{y_i} + \int_{t_i}^{t_{i+1}}\abs{\p_{x}H} \right) 
    \exp\left( \int_{t_i}^{t_{i+1}}\abs{\p_{x}H} \right) 
$$
which imply by induction that 
$$
\abs{y_i}\leq \left( \abs{y_0}+\int_{t_0}^{t_{i}}\abs{\p_{x}H}\right)
\exp\left( \int_{t_0}^{t_{i}}\abs{\p_{z}H} \right).  
$$
Indeed, the inductive step is given by the inequalities
\begin{align*}
 \abs{y_{i+1}}  \leq&\left(  \left( \abs{y_0} + \int_{t_0}^{t_i}\abs{\p_{x}H} \right) 
    \exp\left( \int_{t_0}^{t_i}\abs{\p_{z}H} \right) + \int_{t_i}^{t_{i+1}}\abs{\p_{x}H} \right) \\
    &\times\exp\left( \int_{t_i}^{t_{i+1}}\abs{\p_{z}H} \right) \\
 =&\left( \abs{y_0} + \int_{t_0}^{t_i}\abs{\p_{x}H} \right) 
   \exp\left( \int_{t_0}^{t_{i+1}}\abs{\p_{z}H} \right) \\
 &+\left(\int_{t_i}^{t_{i+1}}\abs{\p_{x}H}\right)\exp\left( \int_{t_i}^{t_{i+1}}\abs{\p_{z}H} \right)\\
 \leq& \left( \abs{y_{t_0}}+\int_{t_0}^{t_{i+1}}\abs{\p_{x}H} \right)
\exp\left( \int_{t_0}^{t_{i+1}}\abs{\p_{z}H} \right) .
\end{align*}
Therefore we have 
$$
\abs{y_N}\leq \left(\norm{\p v}+\abs{t-s}\norm{\p_x H}\right)e^{\abs{t-s}\norm{\p_z H}}. 
$$
\paragraph{(ii)}
Set $D=\sup\set{\norm{g^{\t,\t'}}:\abs{t-\t}<\del_H}$ where $\vf_H^{{\t,\t'}*}\a=g^{\t,\t'}\a$.
We have the generating family  $S^{s,t}(\t,x;\xi)$ given in
\eqref{gfqi_t} with $\bz_0=v(x_0),\bz_1,\ldots,\bz_{N-1}$ 
defined inductively as in \eqref{c-0}. Thus
\begin{align*}
  \frac{d\bz_j}{d\t}=&-
(\p_{\t_{j-1}}\F^{\t_{j-1},\t_j}(x_{j-1},y_j,\bz_{j-1})+
\p_{\t_j}\F^{\t_{j-1},\t_j}(x_{j-1},y_j,\bz_{j-1})) \frac{t_j-s}{t-s}\\
&-\p_z\F^{\t_{j-1},\t_j}(x_{j-1},y_j,\bz_{j-1}) \frac{d\bz_{j-1}}{d\t}
\end{align*}
 Using \eqref{Ft=H} and \eqref{Fs=-H} one proves by
induction that for $j=1,\ldots,N$ one has
\begin{align*}
\frac{d\bz_j}{d\t}&=\sum_{k=1}^{j-1}\prod_{i=k}^{j-1}(1-\p_z\F^{\t_i,\t_{i+1}}(x_i,y_{i+1},\bz_i))
(H(x_{k},y_{k+1},\bz_{k})\\&-H(x_{k-1},y_{k},\bz_{k-1}))\frac{t_k-s}{t-s}\\
 & -H(x_{j-1},y_j,\bz_{j-1}) \frac{t_j-s}{t-s}
\end{align*}
For $\xi\in C(\t,x)$ there exists $y_0\in\p v(x_0)$
such that equations (c1)-(c3) are satisfied. Then 
we have that $\vf^{s,t_i}(x_0, y_0,v(x_0))=(x_i,y_i,z_i)$, $i\le N$,
$y_N=\p_xS^{s,t}(\t,x;\xi))$, $\bz_N=S^{s,t}(\t,x;\xi)$.
We recall that 
\[g^{\t_i,\t_{i+1}}(x_i,y_i,\bz_i)=(1-\p_z\F^{\t_i,\t_{i+1}})(x_i,y_{i+1}\bz_i) \]
so that
$\norm{1-\p_z\F^{\t_i,\t_{i+1}}}\le D$ for $i=0,\ldots,N-1$ and then
\[\abs{\p_\t S^{s,t}(\t,x;\xi)}\le\frac{D^N-1}{D-1}2\norm{H}\]
Since
\begin{multline*}
\p_{\t} R_{H}^{s,t}v(x)\subset\hbox{ co }\{\p_{\t}S^{s,t}(\t,x;\xi):
\vf_{H}^{s,t}(x_0,y_0,v(x_0))\\=(x,\p_xS^{s,t}(\t,x;\xi), S^{s,t}(t,x;\xi))\}
\end{multline*}
by the Mean Value Theorem \ref{mvt},
\[ \abs{R_{H}^{s,\t}v(x)-R_{H}^{s,t}v(x)}
 \leq\abs{\t-t}\frac{D^N-1}{D-1}2\norm{H}. \]
The proof of (iii) does not present changes in the general case.
\end{proof}

Given a subdivision $\z=\set{0=t_0< t_1<\cdots <t_{n}=T}$ of $[0,T],$ 
we define its \emph{norm} as $\abs{\z}=\max \abs{t_{i+1}-t_{i}},$
and the step function 
$$
\z(s)=\max\{t_{i}:t_{i}\le s\},\; s\in[0,T] 
$$
\begin{defn}
 The \emph{iterated minimax operator} for the Cauchy problem \eqref{HJ}
(with respect to $\z$) is defined as follows: for $0\leq s' < s \leq T,$
 \begin{equation}
  R_{H,\z}^{s',s}v(x)=R_{H}^{t_j, s} \comp \cdots \comp R_{H}^{s', t_{i+1}}v(x)
 \end{equation}
where $t_j=\z(s), t_{i}=\z(s').$
When $H$ is fixed, we omit the corresponding subscript.
\end{defn}

\begin{lem}[Cf. \cite{W1}, lemma 3.17]
\label{w1:lem:3.17} Suppose that $(\z_{n})_{n}$ is a sequence of
partitions of $[0,T]$ such that $\abs{\z_{n}}\to 0.$
For $v\in C^{Lip}(\R^{k}),$ the sequence of
functions $u_{n}(s,x):=R_{\z_n}^{0,s}v(x)$ is equi-Lipschitz 
and uniformly bounded on $[0,T]\times K$ 
for any compact $K\subset\R^k$.
\end{lem}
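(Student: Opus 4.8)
The plan is to establish the two conclusions—equi-Lipschitz and uniform boundedness—separately, using the three estimates from Proposition \ref{estimates} as the fundamental inputs, together with the monotonicity Corollary \ref{monotone} and the independence-of-partition Proposition \ref{ind}. The key observation is that each iterate $u_n(s,x)=R_{\z_n}^{0,s}v(x)$ is obtained by composing finitely many single-step minimax operators $R_H^{t_i,t_{i+1}}$, and each such operator controls the Lipschitz constant of its output in terms of the Lipschitz constant of its input via \eqref{w1:prop:3.14.1}. So the natural strategy is an induction on the partition points that tracks how $\norm{\p u_n(t_i,\cdot)}$ grows as $i$ increases.

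First I would handle the spatial Lipschitz bound. Write $L_i=\norm{\p\,u_n(t_i,\cdot)}$. Applying \eqref{w1:prop:3.14.1} across one step gives
\begin{equation*}
L_{i+1}\le\bigl(L_i+(t_{i+1}-t_i)\norm{\p_xH}\bigr)e^{(t_{i+1}-t_i)\norm{\p_zH}}.
\end{equation*}
Iterating this recursion and using that $\sum_i(t_{i+1}-t_i)=s\le T$, one telescopes the product of exponentials into $e^{T\norm{\p_zH}}$ and bounds the accumulated drift term by a geometric-type sum controlled by $\norm{\p_xH}$ and $T$. The crucial point is that the resulting bound depends only on $\norm{\p v}$, $T$, $\norm{\p_xH}$, $\norm{\p_zH}$, and \emph{not} on the number of subdivision points $n$ nor on $\abs{\z_n}$; this is precisely what makes the family equi-Lipschitz in $x$ uniformly over $n$. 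For Lipschitz control in the time variable $s$, I would invoke estimate \eqref{w1:prop:3.14.2}: within a single step the time-modulus is bounded by $\abs{t-\t}C(H)\norm{H}$, and since the spatial Lipschitz constants are now uniformly bounded, the constant $C(H)$ stays controlled across all steps, yielding a uniform time-Lipschitz estimate as well.

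For uniform boundedness I would combine monotonicity with the Lipschitz bound just obtained. Since $v\in C^{Lip}$, on any fixed compact $K$ the function $v$ is bounded; then I would compare $u_n(s,\cdot)$ against constant initial data (or against $v$ shifted by constants) using Corollary \ref{monotone}, or alternatively estimate the pointwise value $u_n(s,x)$ at a fixed basepoint by tracking how much a single minimax step can move the value—controlled by \eqref{w1:prop:3.14.2}—and then extend to all of $K$ using the equi-Lipschitz bound in $x$. Here estimate \eqref{w1:3.16} is also useful, as it shows the dependence on the initial data is confined to an enlarged compact set $\til K$ whose size is governed by the already-bounded Lipschitz constants.

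The main obstacle I anticipate is ensuring that the constants genuinely do not blow up as $\abs{\z_n}\to 0$, i.e. as the number of iterations grows without bound. The danger is that each composition might multiply constants, giving a bound like $C^n$ that diverges. The resolution is that the multiplicative factors in \eqref{w1:prop:3.14.1} are of the form $e^{(t_{i+1}-t_i)\norm{\p_zH}}$, whose product telescopes to $e^{T\norm{\p_zH}}$ because the exponents \emph{add} to the total length $T$ rather than multiply; the same additivity saves the drift term. One must verify carefully that the single-step estimates of Proposition \ref{estimates}, established there for the general (multi-partition) case, apply uniformly to each elementary step $R_H^{t_i,t_{i+1}}$, which requires $\abs{t_{i+1}-t_i}$ small enough relative to $\del_H$—but this is automatic once $\abs{\z_n}$ is small, and for coarse partitions one first refines, invoking Proposition \ref{ind} to see that refinement does not change the operator.
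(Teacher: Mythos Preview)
Your proposal is correct and follows essentially the same route as the paper. The only differences are cosmetic: the telescoping induction you spell out for the spatial Lipschitz bound is exactly the computation already carried out in the ``general case'' part of the proof of Proposition~\ref{estimates}(i), so the paper simply quotes \eqref{w1:prop:3.14.1} and \eqref{w1:prop:3.14.2} without repeating it; and for uniform boundedness the paper takes the most direct path---since $R_{\z_n}^{0,0}v=v$, the time-Lipschitz estimate \eqref{w1:prop:3.14.2} immediately gives $\norm{R_{\z_n}^{0,s}v}_K\le\norm{v}_K+TC(H)\norm{H}$, so neither monotonicity nor \eqref{w1:3.16} is needed.
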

\begin{proof}
It follows from \eqref{w1:prop:3.14.1} that
\[\norm{\p R_{\z_n}^{0,s}v}\le (\norm{\p v}+\abs{s}\norm{\p_xH})
e^{\abs{s}\norm{\p_{z}H}},\]
and from \eqref{w1:prop:3.14.2} that
\[\abs{R_{\z_n}^{0,t}v(x) -R_{\z_n}^{0,s}v(x) } \leq \abs{t-s}C(H)\norm{H},\]
so that in particular
\[\norm{R_{\z_n}^{0,s}v}_K\le \norm{v}_K + T C(H)\norm{H}.\]
The Lemma follows from these inequalities.
\end{proof}
\begin{prop}[Cf. \cite{W1}, prop. 3.18]
 \label{w1:prop:3.18}
 For any sequence $( \z_{n} )$ of subdivisions of $[0,T]$ with 
 $\abs{\z_{n}}\to 0,$ and any compact set $K\subset \R^{k},$ the
 sequence  $u_{n}:=R_{\z_{n}}^{0,s}v(x)$ has a subsequence
 converging uniformly on $[0,T] \times K$ to the viscosity solution of
 the Cauchy problem \eqref{HJ}.
\end{prop}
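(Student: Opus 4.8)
The plan is to combine the compactness furnished by Lemma \ref{w1:lem:3.17} with a short--time consistency estimate for the one--step operator $R_H^{s,t}$, and then to identify the limit through the viscosity machinery. First I would invoke Arzel\`a--Ascoli: by Lemma \ref{w1:lem:3.17} the family $u_n(s,x)=R_{\z_n}^{0,s}v(x)$ is equi--Lipschitz and uniformly bounded on $[0,T]\times K$, so a subsequence converges uniformly on $[0,T]\times K$ to a Lipschitz function $u$, which inherits the initial condition $u(0,\cdot)=v$ since $u_n(0,\cdot)=v$ for every $n$. Because \eqref{HJ} admits a \emph{unique} uniformly continuous viscosity solution (the comparison theorem of \cite{CL86}), it suffices to prove that any such uniform limit $u$ is a viscosity solution; uniqueness then forces every convergent subsequence to share this limit, namely that solution.

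The analytic heart is a \emph{consistency} estimate. For $\phi\in C^2$, a compact $K$, and $0<t-s<\del_H$ I would show, uniformly for $x\in K$,
\[
R_H^{s,t}\phi(x)=\phi(x)-(t-s)\,H\big(s,x,\partial_x\phi(x),\phi(x)\big)+o(t-s).
\]
This I would get from the method of characteristics (Section \ref{subsec:ec_car}): for smooth data and $t-s$ small, $\|\mathds1-d\vf^{s,t}\|<1$, the projected characteristics do not cross, so $\vf_H^{s,t}(j^1\phi)$ remains a $1$--jet graph, the single--step generating family has a unique critical point, and the $\infmax$ selects it; hence $R_H^{s,t}\phi$ coincides with the classical solution of \eqref{HJ}, whose value is \eqref{eq:action} and whose time derivative at $\tau=s$ is $-H$ by \eqref{Ft=H}, giving the expansion. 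Alongside this I would record the \emph{locality} of $R_H^{s,t}$: by estimate \eqref{w1:3.16} the value of $R_H^{s,t}w$ on $K$ depends only on $w$ restricted to the set $\til K$, which shrinks to $K$ as $t-s\to0$, so over a vanishing step the value near a point is determined by data in an arbitrarily small neighborhood.

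With these in hand I would verify the subsolution inequality, the supersolution case being symmetric. Let $\phi\in C^1$ and suppose $u-\phi$ has a local maximum at $(t_0,x_0)$ with $t_0\in(0,T]$; by standard mollification and perturbation I may take the maximum strict and $\phi\in C^2$. Uniform convergence and strictness produce local maxima $(t_n,x_n)$ of $u_n-\phi$ with $(t_n,x_n)\to(t_0,x_0)$. Writing $\sigma_n=\z_n(t_n)$ one has $u_n(t_n,\cdot)=R_H^{\sigma_n,t_n}\big(u_n(\sigma_n,\cdot)\big)$ with $0<\sigma_n<t_n$ and $t_n-\sigma_n\le|\z_n|\to0$ (adjusting $t_n$ off the nodes if necessary). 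Setting $c_n=u_n(t_n,x_n)-\phi(t_n,x_n)$, the maximum gives $u_n(\sigma_n,x)\le\phi(\sigma_n,x)+c_n$ for $x$ near $x_0$; by monotonicity (Corollary \ref{monotone}, via Proposition \ref{ks}) together with the locality above, applying $R_H^{\sigma_n,t_n}$ yields $u_n(t_n,x_n)\le R_H^{\sigma_n,t_n}\big(\phi(\sigma_n,\cdot)+c_n\big)(x_n)$. Feeding in the consistency expansion, cancelling $\phi(t_n,x_n)+c_n$ on the left, dividing by $t_n-\sigma_n>0$, and letting $n\to\infty$ (so $c_n\to u(t_0,x_0)-\phi(t_0,x_0)$), I obtain $\partial_t\phi(t_0,x_0)+H\big(t_0,x_0,\partial_x\phi(t_0,x_0),u(t_0,x_0)\big)\le0$, as required.

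I expect the main obstacle to be the consistency estimate in the genuinely contact setting: since $H$ depends on $z=u$, the operator $R_H^{s,t}$ does not commute with adding constants, so both the short--time expansion and the monotone comparison must track the $z$--argument of $H$, and the remainder $o(t-s)$ must be shown uniform over the bounded family $\{\phi(\sigma_n,\cdot)+c_n\}$ of $C^2$ data — this is exactly where the argument departs from the symplectic case of \cite{W1}. The second delicate point is promoting the purely local maximum to a legitimate comparison for the globally defined $\infmax$; this is handled by the locality in \eqref{w1:3.16} and the truncation/independence results of Propositions \ref{cut} and \ref{ks}, which ensure that over a vanishing time step the minimax value near $x_0$ sees only nearby data.
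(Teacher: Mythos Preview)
Your argument is correct and follows a genuinely different route from the paper's. The paper first extracts a limit $\bar R^{0,s}v$ by Arzel\`a--Ascoli and then proves a \emph{semigroup--type} claim for this limit,
\[
\bar R^{0,s}v(x)=\lim_{k\to\infty}R_{\z_{n_k}}^{s',s}\circ\bar R^{0,s'}v(x),
\]
using the stability estimate \eqref{w1:3.16} on a larger compact $\tilde K$. With this in hand, for a $C^2$ test function $\psi$ touching from above at $(t,x)$, monotonicity gives $\bar R^{t}v(x)\le \lim R_{n_k}^{\tau,t}\psi_\tau(x)$, and since for $\tau$ close to $t$ the minimax on smooth data coincides with the classical characteristic solution, one divides by $t-\tau$ and lets $\tau\to t$. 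You instead work directly with the approximants $u_n$: you locate nearby maxima $(t_n,x_n)$ of $u_n-\phi$, peel off the last step $u_n(t_n,\cdot)=R_H^{\sigma_n,t_n}(u_n(\sigma_n,\cdot))$, and apply monotonicity plus a first--order consistency expansion of $R_H^{\sigma_n,t_n}$ on $\phi(\sigma_n,\cdot)+c_n$. This is the Barles--Souganidis ``scheme'' paradigm and bypasses the semigroup claim entirely.

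What each buys: the paper's route concentrates the analytic work in the semigroup claim, after which the viscosity verification is a clean two--line comparison with the classical solution; your route is more elementary (no intermediate limit identity) but, as you correctly flag, requires (i) uniformity of the $o(t-s)$ remainder over the bounded $C^2$ family $\{\phi(\sigma_n,\cdot)+c_n\}$, and (ii) promoting the merely local inequality $u_n(\sigma_n,\cdot)\le\phi(\sigma_n,\cdot)+c_n$ to something the global $\infmax$ respects. Your handling of (ii) via the finite--speed estimate \eqref{w1:3.16} (so that $\tilde K$ shrinks into the region of validity as $|\z_n|\to0$) is the right idea; an equivalent fix is to add a steep bump to $\phi$ outside a neighborhood of $(t_0,x_0)$ so the inequality becomes global without disturbing the jets at the contact point. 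Your observation that in the contact case $R_H^{s,t}$ does not commute with constants is exactly the departure from \cite{W1}, and your expansion already carries the correct $z$--argument $\phi(\sigma_n,x_n)+c_n\to u(t_0,x_0)$, so the limit inequality lands on $H(t_0,x_0,\partial_x\phi(t_0,x_0),u(t_0,x_0))$ as required.
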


\begin{proof}
 By lemma \ref{w1:lem:3.17}, we can apply Arzela-Ascoli
 Theorem to $(u_{n})\subset C^0([0,T] \times {K})$ to get
 a subsequence $(u_{n_{k}})$ converging uniformly to a function $\br^{0,s}v.$ 
Define $\til{K}$ as in \eqref{tilK}-\eqref{tilk2}.
\begin{claim}
 \label{w1:prop:3.18-1}
 For $ 0\leq s' < s \leq T$ one has 
 \begin{equation}
 \label{w1:3.18}
  \br^{0,s}v(x)=\lim_{n\to \infty} R_{\z_{n_{k}}}^{s',s}\comp\br^{0,s'}v(x)
\end{equation}
 \end{claim}

\begin{proof}[Proof of claim \ref{w1:prop:3.18-1}]

 Applying Arzela-Ascoli Theorem to $(u_{n_{k}})\subset C^0([0,T]\times\til{K}),$ 
we can extract a subsequence converging uniformly in $[0,T]\times\til{K}.$
To easy notation, when $s'=0$ we omit this superscript, and
for the iterated minimax  with respect 
to the partition $\z_{n},$ we use the subscript $n$
instead, and $(s)_{n}$ instead of $\z_{n}(s).$
 
 We first notice that for $0\leq s \leq T$, $x\in\til{K}$: 
 \begin{equation}
  \label{w1:3.19}
  \br^{s}v(x)=\lim_{n \to \infty}R_{n}^{(s)_{n}}v(x)
 \end{equation}
because
\begin{align*}
 \abs{R_{n}^{s}v(x)-R_{n}^{(s)_{n}}v(x)} &
 =\abs{R^{(s)_{n},s}\comp R_{n}^{(s)_{n}}v(x)-R_{n}^{(s)_{n}}v(x)}\\
&\leq \norm{H}(s-(s)_{n})\leq \norm{H}\norm{\z_{n}}.
\end{align*}
Then for any $\ep>0,$ there exists $N$ such that if $i,j>N,$ then
$$
\forall s\in[0,T]: \norm{R_{i}^{(s)_{i}}v-R_j^{(s)_j}v}_{\til{K}} < \ep.
$$
Therefore,
\begin{align*}
 \norm{R_{i}^{(s')_{i},(s)_{i}}\comp R_{i}^{0, (s')_{i}}v - R_{i}^{(s)_{i}}v}_K
 &= \norm{R_{i}^{(s')_{i},(s)_{i}}\comp R_{i}^{(s')_j}v
 -R_{i}^{(s')_{i}, (s)_{i}}\comp R_{i}^{(s')_{i}}v}_K\\
& \leq \norm{R_j^{(s')_j}v - R_{i}^{(s')_{i}}v}_{\til{K}} < \ep
\end{align*}
Letting $j$ go to $\infty,$ we get 
$$
\norm{R_{i}^{(s')_{i},(s)_{i}}\comp \br^{s'}v- R_{i}^{(s)_{i}}v}_K< \ep,
$$
thus, for any $x\in K$
$$
\lim_{i\to \infty}R_{i}^{(s')_{i}, (s)_{i}}\comp \br^{s'}v(x) =
\br^{s}v(x).
$$
Similarly, we conclude that  
$$
\lim_{i\to \infty}R_{i}^{s',s}\comp \br^{s'}v(x)=
\lim_{i\to \infty}R_{i}^{s',s}\comp \br^{s'}v(x).
$$ 
\end{proof}

We now prove that $\br^{t}v(x)$ is a viscosity subsolution of \eqref{eq:HJ}.
Let $\psi$ be a $C^2$ function with bounded second 
derivative defined in a neighborhood of $(t,x)\in\R\times K$, 
such that for $s$ is  close enough to $t,$  $\psi(s,y)=\psi_{s}(y)\geq \br^{s}v(y)$,
 with equality at $(t,x).$

Suppose that $\t\leq t$ is close enough $t,$ so that the
projections of the characteristics originating from 
$$ j^1(\psi_{\t})(x_\t)=(x_{\t}, d\psi_{\t}(x_{\t}), \psi_{\t}(x_{\t})) $$
do not intersect.  Hence, the map $x_{\t}\to x_{t}$ is a difeomorphism.

We conclude that
 \begin{equation}
  \label{w1:3.20}
   \psi_{t}(x)   =\br^{t}v(x)=\lim_{k\to \infty}R_{n_{k}}^{\t, t} \comp \br^{\t}v(x)
\leq \lim_{k\to \infty}R_{n_{k}}^{\t,t}\psi_{\t}(x)=R^{\t,t}\psi_{\t}(x).
 \end{equation}
 The inequality is consequence of Corollary \ref{monotone}.
 
Also, when $\t$ is close enough to $t,$ iterated minimax will be the
minimax ($N=1$) which is a $C^2$ solution of \eqref{HJ}
with initial condition $\psi_{\t},$ and thus
 \begin{equation}
  \label{w1:3.21}
  R^{\t,t}\psi_{\t}(x)=\psi_{\t}(x)-\int_{\t}^{t}H(\th, j^1(\psi_{\th}(x)))d\th.
 \end{equation}

Subtracting \eqref{w1:3.20} from \eqref{w1:3.21}, $ \psi_{t}(x) $ to
the right side, dividing both sides by $t-\t$ and letting  $\t \to t,$ we get
$$
0\leq -\p_{t}\psi_{t}(x)-H(t, j^1(\psi_{t}(x))). 
$$
One proves that $\br^{t}v(x)$ is a viscosity supersolution of
\eqref{eq:HJ} in a similar way. 
\end{proof}

Given  $H\in C_c^2( [0,T] \times J^1(\R^{k}) )$, $v\in C^{Lip}(\R^{k})$,
we say that a function $w: [s,t]\times \R^k\to\R$
is the limit of iterated minimax solutions for \eqref{HJ} on $[s,t],$ if 
for any sequence of subdivisions $\set{\z_{n}}_{n\in \N}$ of $[s,t]$
such that $\abs{\z_{n}} \to 0$ as $n \to \infty,$ the corresponding
sequence of iterated minimax solutions 
$$\set{R_{H,\z_{n}}^{s,\t}v(x)},(\t,x)\in [s,t]\times \R^k$$ 
converges uniformly on compact subsets to
$w$. We denote $w(\tau,x):=\br_{H}^{s,\t}v(x).$

We can now prove our main result

\begin{proof}[Proof of Main Theorem \ref{thm:main}]
Let $K\subset \R^{k}$ and $(\z_{n})$ be any subsequence of subdivisions
of $[0,T]$ such that $\abs{\z_{n}}\to 0.$ 
Denote $u_{n}(t,x)=R_{\z_{n}}^{0,t}v(x)$ and $u(t,x)$ the viscosity
solution of the \eqref{HJ} problem. If $u_{n}$ does not converge
uniformly on $[0,T]\times K,$ there exists a $\ep >0$ and a
subsequence $n_{k}$ such that $\abs{u_{n_{k}}-u}>\ep.$ Note that
$\z_{n_{k}}$ is itself a sequence of subdivions, this contradicts Proposition
\ref{w1:prop:3.18}.
\end{proof}

\section{Example}

Now we consider an example from \cite{JPRZ}. This example shows that we can ask for weaker conditions, for example, $H$ is not bound to have compact support. Also, this example comes from a very different area than mechanics, namely control theory. Futher directions of this work could be investigating on which conditions we can resemble this result and some applications to other areas like control

Consider $H(x,y,z)=z+h(y)$ with $h$ of compact support. The caracteristics equations are
$$\begin{cases}
   \dot{x}=dh(y)\\
   \dot{y}=-y\\
   \dot{z}=ydh(y)-z-h(y).
  \end{cases}
$$
which can be integrated to obtain the  flow
$$
\vf^t(x_0,y_0,z_0)=(x_0+\int_0^tdh(y_0e^{-s})ds,y_0e^{-t},-h(y_0e^{-t})+e^{-t}(h(y_0)+z_0).
$$
Since that the map
$(x_0,y_0,z_0)\mapsto(x_0,y_0e^{-t},z_0)$ is invertible, 
we can 
define a generating function of $\vf^t$
\begin{align*}
\F^t( x_0, y, z_0 ) 
&=y\int_0^t dh(ye^{t-s})ds + h(y) - e^{-t}h(e^t y) + z_0( 1-e^{-t} )\\  
&=\int_0^t e^{-s}h(e^{s}y) ds + z_0( 1-e^{-t} )
\end{align*}
Thus the minimax solution of  
$$
\begin{cases}
 \p_tu(t,x)+u(t,x)+h(\p_xu(t,x))=0\\
 u(0,x)=v(x)
\end{cases}
$$
is given by
$$u(t,x)=\inf\max S_t(x,x_0,y),$$
where the generating function
$$
S_t(x,x_0,y)=( x-x_0)y-\int_0^t e^{-s}h(e^{s}y) ds + e^{-t}v(x_0)
$$
is quadratic at infinity because $h$ has compact support. 
Indeed, $Q(x_{0},y)=-x_{0}y$ so that
$$
S_t(x,x_0,y)-Q(x_{0},y)=xy-\int_0^t e^{-s}h(e^{s}y) ds + e^{-t}v(x_0).
$$
Since $S_{t}(x,x_0,y)$ is $C^{1}$ with respect to $y,$ we have
\begin{align*}
\p_{(x_{0},y)}\left( S_t(x,x_0,y)-Q(x_{0},y) \right)
&= e^{-t}\p v(x_{0})\times \set{x-\int_{0}^{t}dh(e^{s}y)ds}\\
 &=\set{\left( e^{-t}p, x-\int_{0}^{t}dh(e^{s}y)ds \right): p \in \p v(x_{0})}.
\end{align*}

As $h$ is compactly supported and $v$ is a Lipschitz function, 
$$
\norm{\p_{(x_{0},y)}\left( S_{(t,x)}-Q\right)}=\max_{(x_{0},y)}
\set{\Big\|\left( e^{-t}p, x-\int_{0}^{t}dh(e^{s}y)ds \right)\Big\|: p \in \p v(x_{0})}
$$ 
is bounded, and therefore $S_{t}$ is a \texttt{gfqi.}

Had we assumed instead that $h$ was convex we would still had obtained a minimax
\[ u(t,x)=\inf_{x_0}\max_y \Big((x-x_0)y-\int_0^t e^{-s}h(e^{s}y)ds\Big)+e^{-t}v(x_0) \]
with the $\max_y$ a Legendre transform being achieved when
\[ x-x_0=\int_0^t dh(e^{s}y)ds.\]
Letting $l$ to be the Legendre transform of $h$, it is not hard to prove that
\[u(t,x)=\min_y\int_0^te^{-s}l(dh(e^{s}y))ds+v\left(x-\int_0^tdh(e^{s}y)ds\right),\]
(cf. \cite[(4)]{JPRZ}, Hopf-Lax formula with discount.)

%
%
%
\part{Appendix}
\section{Contact topology}
\label{subsec:char_dir}

As noticed in \cite{book:5877}, due to a more complex geometry than ordinary differential equations (\texttt{ode's}), there not exists a unified theory for partial differential equations (\texttt{pde's}). 

For an \texttt{ode}, we can always consider locally integrable vector fields, that is, there are always integral curves for them.
However, even an hyperplane field on $\R^{3}$ is not always integrable. 

For example, let us examine the hyperplane field given by equation 
$$\a:=dz-ydx=0.$$ Now, consider $v\in \evat{\ker \a}{(x,y,z)}, \ v=\left( v_{x},v_{y},v_{z} \right).$ 
Thus, $\evat{\a}{(x,y,z)}$ is a linear form induced with associated matrix $R_{\a}=[-y,0,1].$ 
The last equation states that $v_{z}=yv_{x}.$ 


This hyperplane field is not integrable (that is, there is no submanifold $N$ such that at each $p\in N,$ $T_{p}N$ is on the hyperplane field at $p$) because of  \emph{Frobenius integrability condition} $\left( \a\wed d\a=0 \right)$ does not hold at $\a$ inasmuch as
$$
\a\wed d\a=(dz-ydx)\wed(-dy\wed dx)=dx\wed dy \wed dz \neq 0,
$$
that is, $a \wed d\a$ is a \emph{volume form} on $\R^{3}.$ 

In this part, we shall give definitions and results from contact geometry and theory of first-order p.d.e's in order to make clear statements of subsequent results in this thesis.


Any $\texttt{pde}$ of first order can be written as
\begin{equation}
\tag{F=0}
 \label{fopde}
 F(x_{1},...,x_{n}, \partial_{x_{1}}u(x),...,\partial_{x_{n}}u(x), u(x))=0,
\end{equation}
where $x=(x_{1},...,x_{n}).$ So any $\texttt{pde}$ of first order can be regarded as a hypersurface $\pdehs$  in  $J^{1}(\R^{n})\iso T^{*}\R^{n}\times \R \iso \R^{2n+1},$ and a solution of this $\texttt{pde}$ as a function $u:M\to \R$ whose $1-$graph
$$
j^{1}u=\set{j^{1}_{x}u| x\in M}.
$$
(where $j^{1}_{x}u:=(x,\partial_{x}u(x),u(x))$) lies in $\pdehs$.

Instead of $\R^{n},$ one can consider an $n-$ dimensional manifold $M,$ and in that case  we ontain the space $J^{1}M\iso T^{*}M\times \R.$

Indeed, a $1-$graph is a section over $M.$ Therefore $J^{1}M$ is also a vector bundle over $M$ with projection 
$$
\pi_{M}:J^{1}(M)\to M, (x,y,z)\mapsto x.
$$

$J^{1}(M)$ is not just a differentiable manifold, but has an analogous strucutre to the symplectic structure of the cotangent bundle $T^{*}M.$ Now, we will explain some details about this contact struture. At the end of the section, we will define a very important concept, namely \emph{contactomorphism}, that is, transformations preserving this structure. For example, every solution $\pdehs$ of \eqref{fopde} is a legendrian submanifold of $J^{1}M$ and the flow $\vf^{t}$ generated by the characteristic equations consists of contactomorphisms. 

\begin{defn}
 A \emph{contact structure} on $\conmani$ is a field of hyperplanes $\xi=\ker \a \subset T\conmani$  where $\a$ is a $1-$form satisfying $\a \wed \left( d\a \right)^{n}\neq 0.$ We say that $\a$ is a \emph{contact form,} and the pair $(\conmani, \xi)$ is a \emph{contact manifold.} 
\end{defn}

\begin{exmp}[Standar structure]
For $J^{1}\R^{n}\iso T^{*}\R^{n}\times \R,$ we can choose $\a=dz-ydx$ in local coordinates $(x,y,z).$ 
It is straightforward to verify that $J^{1}(M)$ is a contact manifold and such $\a$ is a contact form: Consider the $1-$form $\a$ in local coordinates, i.e.,
\begin{equation}
 \label{alpha}
 \a=dz-ydx=dz-\sum_{i=1}^{n}y_{i}dx_{i}.
\end{equation}

Then
$$
d\a = -dy\wed dx = -\sum dy_{i}\wed dx_{i},
$$
and therefore
$$
(d\a)^{n}=(-1)^{n} n! \bigwedge_{i=1}^{n} \left( dy_{i}\wed dx_{i} \right).
$$

Thus
$$
\a\wed (d\a)^{n} = (-1)^{n}n! dz \bigwedge_{i=1}^{n} \left( dy_{i}\wed dx_{i} \right) \neq 0.
$$

\end{exmp}

\begin{rem}
 Indeed, along this work we will just consider the contact structure in $J^{1}\R^{n}$ induced by $\a=dz-ydx.$ In the next paragraphs we will explain why it is enough to choose this contact structure.
\end{rem}

 Notice also that if $\a$ is a contact form, then $\a\wed (d\a)^n$ is a \emph{volume form} and therefore, $\conmani$ is orientable. However, some authors define $\xi=\ker \a$ just locally and in that case $\conmani$ is not necessary an orientable manifold anymore. 
 
 If there were another form such that $\xi=\ker \b,$ then we would have $\b=\lam \a,$ for some function $\lam \in C^{\infty}(\conmani, \R \bs 
\set{0}),$ because $$\codim \xi=\dim \im \a= 1.$$ 

Since
$$
\b \wed (d\b)^{n}=(\lam \a)\wed (d(\lam\a))^{n}=\lam \a \wed (\lam d\a + d\lam \wed \a)^{n} =\lam^{n+1}(\a \wed (d\a)^{n}) \neq 0
$$
\emph{the definition of a contact structure $\xi$ does not depend on a particular choice of $\a$.}

\begin{claim} 
\cite[Rem. 1.3.7]{book:72479}
 \label{contacto:alt}
An equivalent definition of $\xi\subset T\conmani$ as a contact structure over a manifold of dimension $(2n+1)$ is as follows: For any local $1-$ form $\a$ with $\xi = \ker \a,$ $(d\a)^{n}|_{\xi}$ is non-degenerate, that is, 
$(\xi_{p}, d\a|_{\xi_{p}})$ is a symplectic space for every $p\in \conmani.$
\end{claim}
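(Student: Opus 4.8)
The plan is to reduce the statement to a pointwise exercise in exterior (symplectic) linear algebra, since both conditions are really conditions on the tangent space at each point. So I would fix $p\in\conmani$, set $V=T_p\conmani$, which has dimension $2n+1$, let $\a$ denote the value $\a_p\in V^{*}$ of a local defining form, write $\om=d\a_p\in\Lambda^{2}V^{*}$, and let $W=\xi_p=\ker\a_p\subset V$ be the hyperplane of dimension $2n$. The entire claim then follows from the algebraic equivalence
$$\a\wed\om^{n}\neq 0\quad\Longleftrightarrow\quad \om|_{W}\text{ is non-degenerate,}$$
established at each $p$ and then read off globally.

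First I would choose a vector $R\in V$ with $\a(R)=1$, producing the splitting $V=\R R\oplus W$ together with the dual splitting $V^{*}=\R\a\oplus W^{*}$, where $W^{*}$ is identified with the annihilator of $R$. Relative to this decomposition I would write $\om=\om_0+\a\wed\b$, with $\om_0\in\Lambda^{2}W^{*}$ equal to the restriction $\om|_{W}$ and $\b\in W^{*}$. The key computation is then to expand $\om^{n}$ and wedge with $\a$. Since $\om_0$ and $\a\wed\b$ are both $2$-forms they commute, and $(\a\wed\b)^{2}=0$ because $\a\wed\a=0$, so only the first two terms of the expansion survive:
$$\om^{n}=\om_0^{\,n}+n\,\om_0^{\,n-1}\wed\a\wed\b.$$
Wedging with $\a$ kills the second term (again via $\a\wed\a=0$), yielding
$$\a\wed\om^{n}=\a\wed\om_0^{\,n}.$$
This identity is the heart of the argument and the one place where the exterior-algebra bookkeeping must be done carefully.

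Finally I would invoke the standard fact of symplectic linear algebra that on the $2n$-dimensional space $W$ a $2$-form $\om_0$ is non-degenerate if and only if its top power $\om_0^{\,n}$ is a nonzero volume form, equivalently that its Pfaffian is nonzero. Since $\a$ is a nonzero factor transverse to $\Lambda^{2n}W^{*}$, the form $\a\wed\om_0^{\,n}\in\Lambda^{2n+1}V^{*}$ is nonzero exactly when $\om_0^{\,n}\neq 0$; combined with the displayed identity this gives the desired pointwise equivalence. Running it over every $p$ identifies the two global definitions. To justify the phrase ``for any local $1$-form'', I would note, as was already observed in the excerpt, that for $\b=\lam\a$ one has $d\b|_{\xi}=\lam\,d\a|_{\xi}$ on $\xi=\ker\a$, so non-degeneracy of $d\a|_{\xi}$ is independent of the chosen defining form. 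I expect no genuine obstacle beyond the careful expansion in the displayed computation; the result is entirely local and linear-algebraic.
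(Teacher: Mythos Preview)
Your argument is correct and is in fact the standard linear-algebra proof of this equivalence. The paper itself does not supply a proof of this claim at all: it is stated with a citation to \cite[Rem.~1.3.7]{book:72479} and immediately followed by the Darboux theorem, so there is nothing in the paper to compare your approach against. Your decomposition $\om=\om_0+\a\wedge\b$ via a choice of $R$ with $\a(R)=1$, the binomial expansion using $(\a\wedge\b)^2=0$, and the resulting identity $\a\wedge\om^{n}=\a\wedge\om_0^{\,n}$ are all valid, as is the final appeal to the fact that a $2$-form on a $2n$-dimensional space is non-degenerate precisely when its $n$-th power is a nonzero top form. The remark on independence of the defining form, via $d(\lam\a)|_{\xi}=\lam\,d\a|_{\xi}$, is also correct.
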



As in symplectic topology, the following result states that there are not local invariants in contact topology:
\begin{thm}[Darboux]\cite[Th. 2.5.1]{book:72479} \label{darboux} Let $(\conmani,\xi)$ be a contact structure and $p\in \conmani.$ Thus there exists a coordinate system $$(U,x_{1},y_{1},...,x_{n},y_{n},z)$$ centered around $p$ such that in $U$ and a diffeomorphism $f:U\to f(U)\subset J^{1}\R^{n}$ such that $Df\left( \xi \right)=\ker \a$ by defining
\begin{equation}
 \label{aest}
 \a=dz-ydx.
\end{equation}

\end{thm}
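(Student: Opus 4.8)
The plan is to reduce the statement to the symplectic Darboux theorem together with a normalization in the extra ``$z$'' direction supplied by the Reeb vector field. Fix a local contact form $\a$ with $\xi=\ker\a$ near $p$; since the conclusion only concerns the hyperplane field $\xi$, I am free to rescale $\a$ by any nonvanishing function. First I would introduce the \emph{Reeb vector field} $R$, characterized by $\a(R)=1$ and $i_R d\a=0$. Because $\a\wed(d\a)^n\neq 0$, the two-form $d\a$ has rank exactly $2n$, so its kernel is the line $\R R$; in particular $R$ is nowhere zero and, since $\a(R)=1\neq 0$, it is transverse to $\xi$.

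Next I would pick a local hypersurface $\Sigma\ni p$ transverse to $R$. As $T\Sigma$ is then a complement to $\ker d\a=\R R$, the restriction $\om:=d\a|_{\Sigma}$ is nondegenerate, hence a symplectic form on the $2n$-dimensional manifold $\Sigma$. Applying the symplectic Darboux theorem to $(\Sigma,\om)$ produces coordinates $(x_1,\dots,x_n,y_1,\dots,y_n)$ on $\Sigma$ with $\om=\sum_i dx_i\wed dy_i$. Since $d(\a|_{\Sigma})=\om=d\bigl(-\sum_i y_i\,dx_i\bigr)$, the Poincar\'e lemma gives a function $h$ on $\Sigma$ with $\a|_{\Sigma}=-\sum_i y_i\,dx_i+dh$.

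Then I would propagate this data off $\Sigma$ using the Reeb flow: extend each $x_i,y_i$ to a neighborhood of $p$ by requiring it to be constant along the flow of $R$, and let $z$ be the flow parameter, so that $R=\partial_z$ and $\Sigma=\{z=0\}$. The key computation is that $\a$ is invariant under the Reeb flow, because $\mathcal{L}_R\a=d(i_R\a)+i_R d\a=d(1)+0=0$; hence in these coordinates $\a$ is independent of $z$ and its $dz$-coefficient equals $\a(R)=1$. Comparing with the slice data gives $\a=dz-\sum_i y_i\,dx_i+dh$, and since $h=h(x,y)$ the function $z+h$ is again a legitimate coordinate; replacing $z$ by $z+h$ yields exactly $\a=dz-\sum_i y_i\,dx_i=dz-y\,dx$. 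The chart $f$ built from $(x,y,z)$ is then the desired diffeomorphism with $Df(\xi)=\ker\a$.

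I expect the main obstacle to be the bookkeeping that glues the two normalizations together: one must check that the symplectic coordinates on $\Sigma$, once transported by the Reeb flow, remain a genuine coordinate system near $p$ and that the Poincar\'e primitive $h$ can be absorbed into $z$ without disturbing the $x_i,y_i$. A cleaner but less explicit alternative would be Moser's homotopy method: first put $\a$ and $d\a$ into standard form at the single point $p$ by a linear change of coordinates, then connect $\a$ to the model form $dz-y\,dx$ by the segment $\a_t$, which stays contact near $p$ since the two forms agree there, and integrate the time-dependent vector field solving the conformal Moser equation $\mathcal{L}_{X_t}\a_t+\dot\a_t=g_t\a_t$ with $X_t\in\ker\a_t$; here the obstacle shifts to solving that equation, which reduces to inverting the nondegenerate $d\a_t|_{\xi_t}$.
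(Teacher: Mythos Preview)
The paper does not actually prove this theorem: it is stated with the citation \cite[Th.~2.5.1]{book:72479} and used as a black box, so there is no in-paper argument to compare against.

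Your proposal is correct. The Reeb-flow reduction you outline is one of the two standard proofs: the computation $\mathcal{L}_R\a=d(1)+i_Rd\a=0$ is the crucial step, and your handling of the primitive $h$ (absorbing it into $z$ via $z\mapsto z+h(x,y)$, which is a diffeomorphism since $\partial_z(z+h)=1$) is fine. The only point worth tightening is the claim that the flow-extended $(x_i,y_i,z)$ form a coordinate system: this follows from the flow-box theorem for the nowhere-vanishing field $R$ transverse to $\Sigma$, which you implicitly invoke but do not name.

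For what it is worth, the reference the paper cites (Geiges, \emph{An Introduction to Contact Topology}, Theorem~2.5.1) proves Darboux by the Moser method---precisely the ``cleaner but less explicit alternative'' you sketch at the end. So your main argument is a genuinely different (and arguably more hands-on) route from the one the paper defers to; the Moser approach avoids the symplectic Darboux prerequisite and the Poincar\'e-lemma bookkeeping at the cost of the linear-algebra normalization at $p$ and the solvability of the conformal Moser equation, which, as you note, comes down to the nondegeneracy of $d\a_t|_{\xi_t}$.
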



A submanifold is called integral if the tangent plane at each point is a subspace of the contact plane. For example, a $1-$graph is always an integral submanifold of $J^{1}(M),$ of the same dimension as $M.$ For $\s(x)=\left( x, \s_{y}(x), \s_{z}(x) \right):$
\begin{align*}
 \s=j^{1}\vf &\imply  \s_z(x)=\vf(x), \,  \s_y(x)dx=d\vf(x)\\
&\imply \s^{*}\a=0.
\end{align*}
Since $\a\left( D\s(x)(v) \right)=0,$ we have
\begin{equation}
 \label{Dsx}
  \im D\s(x) \subset \ker \evat{\a}{\s(x)},
\end{equation} that is, every tangent plane of every $1-$graph at a given point lies in the same hyperplane.

Indeed, there are not integral submanifolds of higher dimension.
\begin{defn}
 Let $(\conmani, \xi)$ be a contact manifold. A submanifold $L$ of $(\conmani, \xi)$ is called \emph{isotropic} if 
$T_{p}L\subset \xi_{p}$ for every $p\in L.$
\end{defn}

\begin{prop}\cite[Prop. 1.5.12]{book:72479}
 Let $(\conmani,\xi)$ be a contact manifold of dimension $2n+1$ and $L\subset (\conmani,\xi)$ a isotropic submanifold. Then $\dim L \leq n.$
\end{prop}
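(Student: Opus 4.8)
The plan is to reduce the statement to the corresponding fact of linear symplectic algebra, using the pointwise symplectic structure that Claim \ref{contacto:alt} places on the contact hyperplanes.

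First I would fix a point $p \in L$ and a local contact form $\a$ with $\xi = \ker \a$. By Claim \ref{contacto:alt} the pair $(\xi_p, d\a|_{\xi_p})$ is a symplectic vector space, and since $\conmani$ has dimension $2n+1$ while $\xi_p$ has codimension one, we have $\dim \xi_p = 2n$. Because $L$ is isotropic, $T_pL \subset \xi_p$, so the question reduces to bounding the dimension of a subspace of a $2n$-dimensional symplectic space.

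The crucial step is to show that $T_pL$ is an \emph{isotropic subspace} of $(\xi_p, d\a|_{\xi_p})$, that is, $d\a$ restricted to $T_pL \times T_pL$ vanishes. To see this I would take $v, w \in T_pL$, extend them to vector fields $V, W$ tangent to $L$, and evaluate the invariant formula
$$ d\a(V,W) = V(\a(W)) - W(\a(V)) - \a([V,W]). $$
Isotropy of $L$ gives $\a(V) \equiv \a(W) \equiv 0$ along $L$; since $V$ and $W$ are tangent to $L$, the terms $V(\a(W))$ and $W(\a(V))$ are derivatives along $L$ of functions vanishing on $L$, hence vanish at $p$; finally $[V,W]$ is again tangent to $L$, so $\a([V,W])$ vanishes along $L$ as well. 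Thus $d\a|_p(v,w) = 0$, and $T_pL$ is isotropic in the symplectic sense.

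Finally I would invoke the standard lemma that in a symplectic vector space $(E, \om)$ of dimension $2n$ any subspace $U$ with $\om|_U = 0$ satisfies $U \subseteq U^{\om}$, where $U^{\om}$ denotes the symplectic orthogonal; non-degeneracy gives $\dim U + \dim U^{\om} = 2n$, so $\dim U \le 2n - \dim U$ and therefore $\dim U \le n$. Applied to $U = T_pL$ this yields $\dim L = \dim T_pL \le n$, as claimed. The only genuine subtlety is the middle step --- passing from isotropy of the submanifold ($\a|_{TL} = 0$) to isotropy of $T_pL$ for the two-form $d\a$ --- and within it the observation that the bracket term drops out precisely because tangency to $L$ is preserved under the Lie bracket; everything else is a dimension count.
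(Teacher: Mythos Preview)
The paper does not actually supply a proof of this proposition; it merely cites \cite[Prop.~1.5.12]{book:72479} and moves on, so there is nothing to compare against line by line. Your argument is correct and is the standard one: Claim~\ref{contacto:alt} gives the symplectic structure on $\xi_p$, Cartan's formula shows $d\a|_{T_pL}=0$, and the linear-algebra bound $\dim U + \dim U^{\om} = 2n$ finishes.

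One small remark on the middle step: rather than extending $v,w$ to vector fields and invoking the bracket identity, you can get $d\a|_{T_pL}=0$ in one stroke by pulling back along the inclusion $\iota:L\hookrightarrow\conmani$, since $\iota^*(d\a)=d(\iota^*\a)=d(0)=0$. This avoids the need to check that $[V,W]$ stays tangent to $L$ (though of course it does), and is perhaps closer in spirit to how the cited reference argues.
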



\begin{defn}
 An isotropic submanifold $L\subset \left( \conmani, \xi \right), \; \dim\conmani=2n+1$ of maximal dimension $n$ is called Legendrian. \end{defn}

 \begin{claim} Let $(\conmani, \xi)=(J^{1}\R^{n},\ker \a).$ It follows form \eqref{Dsx} and the fact that $$\dim\left( j^{1}f \right)=\dim(M)$$ that $1-$graphs are Legendrian submanifolds of $J^{1}(M).$
\end{claim}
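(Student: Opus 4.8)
The plan is to prove the Claim by unwinding the definition of ``Legendrian'' and combining the inclusion already recorded in \eqref{Dsx} with a dimension count. Throughout, $\dim M = n$, so the ambient manifold $J^1(M) \simeq T^*M \times \R$ is a contact manifold of dimension $2n+1$, and by the Proposition cited above (Prop.\ 1.5.12) every isotropic submanifold has dimension at most $n$. The two ingredients I would assemble are: (i) the $1$-graph $j^1f$ is an embedded submanifold of dimension exactly $n$; and (ii) at every point its tangent space lies in the contact hyperplane.

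First I would observe that a $1$-graph is, by construction, the image of the section $\s = j^1 f : M \to J^1(M)$, $\s(x) = (x, df(x), f(x))$. Since $\pi \comp \s = \id_M$ (with $\pi$ the projection $(x,y,z)\mapsto x$), the map $\s$ is a smooth embedding; hence $j^1f = \im \s$ is an embedded submanifold diffeomorphic to $M$, so $\dim\left( j^1 f \right) = \dim M = n$. This is precisely the dimension statement invoked in the Claim.

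Next I would establish isotropy, which is exactly what \eqref{Dsx} provides. For $p = \s(x) \in j^1 f$ the tangent space is $T_p\left( j^1 f \right) = \im D\s(x)$, and \eqref{Dsx} gives $\im D\s(x) \subset \ker \evat{\a}{\s(x)} = \x_p$. Thus $T_pL \subset \x_p$ for every $p \in L = j^1 f$, so $j^1 f$ is an isotropic submanifold of $(J^1(M), \x)$. Combining this with the dimension count, $j^1 f$ is an isotropic submanifold of the maximal possible dimension $n = \tfrac{(2n+1)-1}{2}$; by definition this means $j^1 f$ is Legendrian, which completes the argument.

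There is no genuine obstacle here: the statement is essentially a definitional verification, and the only nontrivial analytic input, namely that the tangent planes of a $1$-graph all lie in the contact field, has already been secured in \eqref{Dsx} from the relation $\s^{*}\a = 0$. The one point worth stating carefully is that $j^1f$ really is a submanifold of dimension $n$ (so that the inequality $\dim L \le n$ from Prop.\ 1.5.12 is saturated), which follows immediately from $\s$ being a section over $M$.
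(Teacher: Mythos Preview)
Your proposal is correct and follows exactly the route indicated in the paper: the Claim is stated there without a separate proof, simply as an immediate consequence of \eqref{Dsx} (isotropy) together with the dimension equality $\dim(j^1 f)=\dim M=n$, and you have cleanly supplied the two lines of detail (that $\s$ is a section hence an embedding, and that maximal-dimensional isotropic means Legendrian) which the paper leaves implicit.
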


Another two important concepts in the study of geometry of our Cauchy problem are \emph{contatomorphism} and \emph{contact isotopy}. The first one is a diffeomorphism preserving contact structures and the second one is a family of contactomorphisms varying in the time. Our most important example of a contact isotopy is the flow generated by \eqref{bh:1.3a}-\eqref{bh:1.3c}. At the end of the section, we will give a result about how to recovery a classical solution for \eqref{HJ} from this flow. From here and because Theorem \ref{darboux}, we will just consider contact manifolds $\left( J^{1}\R^{n}, \ker \a \right)$ with $\left( x,y,z \right)$ local coordinates for $J^{1}\R^{n}\iso T^{*}\R^{n}\times \R$ and $\a=dz-ydx.$

Let $\xi=\ker \a$ a contact structure for contact manifold $J^{1}M.$ A diffeomorphism $ \psi:J^{1}M \to J^{1}M $ is called a \emph{contactomorphism} if $ \psi $ preserves the oriented hyperplane field $ \xi. $ This is equivalent to the condition
\[ \psi^{*}\a = e^{h}\a \]
for some function $ h:J^{1}M \to \R. $ A contact isotopy is a smooth family $ \psi_{t}:J^{1}M\to J^{1}M $ of contactomorphism such that
\[ \psi^{*}_{t}\a = e^{h_{t}}\a. \]

%
%
%

A vector field $X:J^{1}M \to TJ^{1}M$ which satisfies $\L_{X}\a=g\a$ for some function $g: J^{1}M \to \R$ is called a \emph{contact vector field}.

For every contact form $\a$, we define the \emph{Reeb vector field} $R_{\a}$ as the unique one for which the following equations hold
\begin{equation}
 \label{reeb}
 \tag{Reeb}
 \begin{cases}d\a(R_{a},v )=0, & v\in T_{p}J^{1}M \\ \a(R_{\a})=1. \end{cases}
\end{equation}

Such field exists because for every $p\in J^{1}M,$ $\ker d\a|_{T_{p}J^{1}M}$ is one-dimensional, and $R_{\a}$ is defines except for a rescaling and the second condition allows us choose it uniquely. The importance of this vector field is given by the following result:
\begin{lem}\cite[lemma 3.49]{book:7935}
 \label{bh:lem:1.1}
 Let $(\conmani,\xi=\ker \a)$ be a contact structure with Reeb field $R_{\a}.$ Thus:
 \begin{enumerate}[(i)]
  \item $X:\conmani \to T\conmani$ is a contact vector field if and only if there exist a function $H:\conmani \to \R$ such that
\begin{equation}
\label{ist:3.10}
\begin{cases}
 i(X)\a=-H, \\ \ i(X)d\a= dH-\left( i(R_{\a})dH \right)\a.
 \end{cases}
\end{equation}
\item For every function $H:\conmani\to \R,$ there exists an unique contact vector field $X_{H}:\conmani \to T\conmani$ which satisfies
\eqref{ist:3.10}
 \end{enumerate}
\end{lem}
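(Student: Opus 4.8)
The plan is to reduce both parts to Cartan's magic formula $\L_{X}\a = i(X)d\a + d(i(X)\a)$ together with the two defining properties of the Reeb field, namely $\a(R_{\a})=1$ and $i(R_{\a})d\a=0$. Throughout I set $H=-i(X)\a$, which is exactly the candidate Hamiltonian appearing in \eqref{ist:3.10}, so that the first equation there holds by definition.

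For the forward implication of (i), assume $X$ is a contact vector field, so that $\L_{X}\a=g\a$ for some function $g$. Cartan's formula gives $g\a = i(X)d\a - dH$, hence $i(X)d\a = dH + g\a$. To pin down $g$ I would evaluate this identity on $R_{\a}$: the left-hand side is $d\a(X,R_{\a})=-d\a(R_{\a},X)=0$ since $i(R_{\a})d\a=0$, while the right-hand side is $i(R_{\a})dH + g$ because $\a(R_{\a})=1$. Thus $g=-i(R_{\a})dH$, and substituting back yields precisely the second equation of \eqref{ist:3.10}. The converse is a direct check: given the two equations, Cartan's formula computes $\L_{X}\a = \big[dH-(i(R_{\a})dH)\a\big] + d(-H) = -(i(R_{\a})dH)\a$, which is proportional to $\a$, so $X$ is a contact vector field.

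For (ii), I would use the splitting $T\conmani = \R R_{\a}\oplus\xi$ induced by the Reeb field and write any candidate as $X = aR_{\a}+Y$ with $Y\in\xi$. The first equation of \eqref{ist:3.10} forces $a=\a(X)=-H$. For the second equation, since $i(R_{\a})d\a=0$ one has $i(X)d\a = i(Y)d\a$, so it remains to solve $i(Y)d\a = dH-(i(R_{\a})dH)\a$ for $Y\in\xi$. Here the key structural input is Claim \ref{contacto:alt}: $d\a|_{\xi}$ is non-degenerate, so $Y\mapsto i(Y)d\a$ is a fibrewise isomorphism from $\xi$ onto the annihilator of $R_{\a}$ inside $T^{*}\conmani$. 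Existence and uniqueness of $Y$, and hence of $X_{H}=-HR_{\a}+Y$, then follow once I confirm that the target form lands in that annihilator; smoothness of $X_{H}$ comes from smoothly inverting $d\a|_{\xi}$, and uniqueness is immediate since both $a$ and $Y$ were determined uniquely.

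The step I expect to be the crux is exactly this compatibility check: one must verify that $\b:=dH-(i(R_{\a})dH)\a$ annihilates $R_{\a}$, so that it really lies in the image of the isomorphism $Y\mapsto i(Y)d\a$ (note $i(Y)d\a$ itself kills $R_{\a}$, as $d\a(Y,R_{\a})=0$). This is immediate, since $\b(R_{\a})=i(R_{\a})dH - (i(R_{\a})dH)\,\a(R_{\a}) = i(R_{\a})dH - i(R_{\a})dH = 0$, and it reveals that the correction term $-(i(R_{\a})dH)\a$ is precisely what forces the vanishing. This is the whole reason that term is present in \eqref{ist:3.10}, and recognizing its role is the only genuinely non-mechanical part of the argument.
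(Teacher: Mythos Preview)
Your argument is correct and is the standard proof of this fact. Note, however, that the paper does not actually prove this lemma: it is stated with a citation to \cite[lemma 3.49]{book:7935} and no proof is given in the text. So there is nothing in the paper to compare your approach against; what you have written is essentially the proof one finds in the cited reference, using Cartan's formula to handle (i) and the Reeb splitting together with the non-degeneracy of $d\a|_{\xi}$ (Claim \ref{contacto:alt}) to handle (ii).
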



\begin{exmp}
 \label{exmp:reeb}
 The Reeb vector field for the standard contact $1-$form $ \a=dz-ydx $ on $J^{1}\R^{n}$  is $\partial_{z}.$ Let's verify this claim: If $R=\left( R_{x},R_{y},R_{z} \right),$ then
 $$
 0=i(R)d\a=-R_{y}dx+R_{x}dy,
 $$
 implies $R_{x}=R_{y}=0.$ Finally
 $$
 1=\a(R)=R_{z}-yR_{x}=R_{z}.
 $$
 We conclude that $ R=(0,0,1)=\partial_{z}.$ So
\[X_{H}=Z-HR_{\a}=\left( Z_{x},Z_{y},Z_{z} \right)-H(0,0,1)
=\left( \p_{y}H, -\p_{x}H-y\p_{z}H, y\partial_{y}H-H \right),\]
and therefore, the flow generated by $X_{H}$ is given precisely by \eqref{bh:1.3a}, \eqref{bh:1.3b} and \eqref{bh:1.3c}.
\end{exmp}

For a given function $H\in C_{c}^{2}\left( [0,T]\times J^{1}\R^{k}
\right) ,$ the following statement relates solutions
$S:[0, T]\times \R^{k} \to \R$ for 
\emph{Cauchy problem of First Order} \eqref{HJ} (being $S_{0}:\R^{k} \to \R$ the initial condition) and solutions for characteristic equations \eqref{bh:1.3a}-\eqref{bh:1.3c} starting at the $1-$graph of $S_{0}.$

\begin{prop}\cite[Prop. 1.3]{Bh}
 \label{bh:prop:1.3}
 Suppose that $S\in C^2([0,T] \times \R^{k})$ is a solution for the given Cauchy problem \eqref{HJ}. Then, if a solution for the equation
\begin{equation}
     \label{xdot}
\dot{x}=\pd{y}H\left( t,x, \pd{x}S(t,x), S(t,x) \right),
\end{equation}
on $[0,T]$ is given, the curve defined by
\begin{equation}
\label{hjcc}
 \left(x(t),y(t),z(t)\right)=\left( x(t), \pd{x}S(t,x(t)), S(t,x(t)) \right).
\end{equation}
is a solution for characteristic equations \eqref{bh:1.3a}-\eqref{bh:1.3c}.

Conversely,  $S_{0}\in C^{1}(M)$, $T$ is small enough and  $$(X_{(t,x)},Y_{(t,x)},Z_{(t,x)}):[0,T]\to J^{1}\R^{k}$$ is the unique solution for characteristic equations \eqref{bh:1.3a}-\eqref{bh:1.3c} with boundary conditions
$$
\begin{cases}
X_{(t,x)}(t)=x, \\ 
Y_{(t,x)}(0)=\pd{x}S_0X_{(t,x)}(0)), \\
Z_{(t,x)}(0)=S_0(X_{(t,x)}(0)),
\end{cases}
$$ 
then 
\begin{equation}
\label{aux:1.15}
S(t,x)=Z_{(t,x)}(t)
   \end{equation} 
defines the solution for the Cauchy problem with initial condition $S(0,x)=S_{0}(x).$
\end{prop}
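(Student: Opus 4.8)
The plan is to prove the two implications separately: the direct implication is a computation with the chain rule, while the converse rests on the contact geometry of the characteristic flow.

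For the direct implication I would fix a solution $x(\cdot)$ of \eqref{xdot} and put $y(t)=\pd{x}S(t,x(t))$, $z(t)=S(t,x(t))$ as in \eqref{hjcc}. Then \eqref{bh:1.3a} holds by construction, since $\dot x=\pd yH(t,x,\pd xS,S)=\pd yH(t,x,y,z)$. Differentiating $z(t)=S(t,x(t))$ gives $\dot z=\p_tS+\pd xS\cdot\dot x=\p_tS+y\,\pd yH$, and inserting the equation $\p_tS=-H$ yields \eqref{bh:1.3c}. For \eqref{bh:1.3b} I would differentiate $y(t)=\pd xS(t,x(t))$ to get $\dot y=\p_t\pd xS+\pd{xx}S\cdot\dot x$ and feed in the $x$-derivative of the Hamilton--Jacobi equation, $\p_t\pd xS+\pd xH+\pd yH\,\pd{xx}S+\pd zH\,\pd xS=0$; the two $\pd{xx}S$ contributions cancel because $\dot x=\pd yH$, leaving $\dot y=-\pd xH-y\,\pd zH$. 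This direction only uses $S\in C^2$ and is routine.

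For the converse the idea is to recognize $S(t,x)=Z_{(t,x)}(t)$ as the function whose $1$-graph is the image of $j^1S_0$ under the flow. I would flow the Legendrian $\Lam_0=j^1S_0=\set{(x_0,\pd xS_0(x_0),S_0(x_0))}$ by $\vf^t$, writing the characteristic issued from $x_0$ as $(x(t,x_0),y(t,x_0),z(t,x_0))$. For $T$ small the map $x_0\mapsto x(t,x_0)$ is a diffeomorphism by the inverse function theorem, its Jacobian being the identity at $t=0$; this is precisely what makes the two-point boundary value problem of the statement solvable, recovering a unique $x_0=x_0(t,x)$ with $x(t,x_0)=x$ and setting $S(t,x)=z(t,x_0(t,x))$. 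The crux is then to show $\Lam_t:=\vf^t(\Lam_0)$ is again a $1$-graph, i.e. that $\pd xS(t,x)=y(t,x_0(t,x))$. Here I invoke that $\vf^t$ consists of contactomorphisms (Example \ref{exmp:reeb} exhibits $X_H$ as the contact vector field of $H$): since $j^1S_0$ is Legendrian and contactomorphisms preserve the contact distribution, $\Lam_t$ remains Legendrian, so $\alpha=dz-y\,dx$ pulls back to zero on it, giving $\pd xS=y$ once the projection is a diffeomorphism. Equivalently, one checks directly that $\pd{x_0}z-y\,\pd{x_0}x$ satisfies a linear ODE in $t$ with vanishing initial data. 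With $\pd xS=y$ in hand, the equation falls out of \eqref{bh:1.3c} exactly as before: $\dot z=\p_tS+y\,\pd yH$ compared with $\dot z=y\,\pd yH-H$ gives $\p_tS=-H(t,x,\pd xS,S)$, and $S(0,x)=z(0)=S_0(x)$ is the initial condition.

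I expect the main obstacle to be the Legendrian-invariance step, namely establishing $\pd xS=y$ on $\Lam_t$ and, with it, that $S$ is a genuine $C^1$ (indeed $C^2$, granting enough regularity of $S_0$ and $H$) function; the smoothness and graph structure of $S$ rest on this together with the small-$T$ diffeomorphism, whereas the verification of the equation itself is the same short computation as in the direct implication.
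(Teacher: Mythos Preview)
The paper does not actually prove this proposition: it is quoted verbatim from Bhupal's thesis \cite[Prop.~1.3]{Bh} and used as a black box, with no proof environment following the statement. There is therefore nothing in the paper to compare your argument against.

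That said, your proposal is the standard classical argument and is correct. The direct implication is exactly the chain-rule computation you outline: differentiating $z=S(t,x(t))$ and $y=\pd xS(t,x(t))$ and using the $x$-derivative of the Hamilton--Jacobi equation makes the second-order term $\pd{xx}S$ cancel against $\dot x=\pd yH$, yielding \eqref{bh:1.3a}--\eqref{bh:1.3c}. For the converse, your two suggested mechanisms for obtaining $\pd xS=y$ are both valid and are in fact the same statement read two ways: the contactomorphism property $(\vf^t)^*\a=e^{h_t}\a$ (established in the appendix via Example~\ref{exmp:reeb} and Lemma~\ref{bh:lem:1.1}) says precisely that $dz-y\,dx$ vanishes on $\vf^t(j^1S_0)$, while the ``direct ODE'' version amounts to checking that $w(t,x_0):=\pd{x_0}z-y\,\pd{x_0}x$ satisfies $\dot w=-\pd zH\,w$ with $w(0,\cdot)=0$. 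Either route, together with the small-$T$ invertibility of $x_0\mapsto x(t,x_0)$, gives the $1$-graph structure and then \eqref{bh:1.3c} delivers $\p_tS=-H$ exactly as you say. Your caveat about regularity is also apt: the statement writes $S_0\in C^1$, but to conclude $S\in C^2$ one needs $S_0\in C^2$ (and $H\in C^2$), which is implicit in the way the proposition is used elsewhere in the paper.
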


If $(\vf^{t})_{t\in[0,T]}$ is the isotopy related to $H$ 
that is,
\begin{equation}
 \label{gt}
 \frac{d}{dt}\vf^{t}=X_{H}(\vf^{t});
\end{equation} 
proposition \ref{bh:prop:1.3} says that if $S(x,t)=S_t(x)$ is a solution for \eqref{HJ}, then
\begin{equation}
 \label{geosol}
 j^{1}S_{t}=\vf^{t}\left( j^{1}S_{0} \right),
\end{equation} 
and conversely, if $t$ is small enough, then
the solution for the Cauchy problem \eqref{HJ} with initial condition $S_0$ is defined by \eqref{geosol}.

However, there could happen some issues; first of all, contact transformation $\vf^{t}$ 
could no longer be well-defined at each point of $j^{1}S_{0}.$ In addition, although $\vf_t( j^{1}S_{0})$ was well-defined, it could happen that it is not a section of $\pi:J^{1}\R^{k}\to \R^{k}, \ \left( x,y,z \right)\to x$ anymore.


\section{Clarke Calculus}
\label{calculoclarke}

In this part, we will show some results from \emph{Clarke calculus} for \emph{generalizaed gradients,} used in parts above.


We say that $\R^{m}\to\R^{}$ \emph{Lipschitz of rank $K$} near a given point $x\in \R^{m},$ if for some point $\ep >0,$ 
we have
$$
\abs{f(y)-f(z)}\leq K\norm{y-z},
$$
for all $y,z\in B(x,\ep).$

The \emph{generalized directional derivative} of $f$ at $x$ in the direction $v$ is defined as
$$
f^{o}(x;v):=\limsup_{y\to x, t\downarrow 0}\dfrac{f(y+tv)-f(y)}{t},
$$
for $y\in \R^{m}$ and $t>0.$

A function $g$ is called \emph{positively homogeneous} if $g(\lam v)=\lam g(v)$ for $\lam\geq 0,$ and subadditive if for every $v,w:$
$$
g(v+w)\leq g(v)+g(w).
$$

\begin{defn}
 A function $F:X\to \R^{}$ is called \emph{upper semicontinuous } if: 
 $$
 v_{i} \in X \to v \in X \implies \limsup_{i\to \infty}F(v_{i})\leq F(v).
 $$
\end{defn}

\begin{prop}\cite[Prop. 10.2]{CL1}
 \label{cl:prop:10.2}
 Let $f$ be a Lipschitz function of rank $K$ near $x.$ Hence:
 \begin{enumerate}[(a)]
  \item $v\mapsto f^{o}(x;v)$ is finite, positively homogeneous and subadditive function on $\R^{m},$ and for all $v\in \R^{m}:$
$$
\abs{f^{o}(x;v)}\leq K\norm{v}.
$$
\item For all $v\in \R^{m},$ the map $(u, w)\mapsto f^{o}(u;w)$ is upper semicontinuous at $(x;v),$ 
and $w \mapsto f^{o}(x;w)$ is Lipschitz of rank $K$ on $\R^{m}.$
 \end{enumerate}
 
The following results allows us to prove the above one:
 
 \begin{thm} \cite[Theorem 4.25]{CL1}
 \label{cl:thm:4.25}
  Let $g\in \R^{m}\to \R\cup \set{\infty}$ a lower semicontinuous, subadditive and positively homogeneous funtion such that $g(0)=0.$ Then there exists a unique convex subset $\Sig\subset \R^{m}$ such that
  $g$ is the support function $H_{\sig}$ of $\Sig,$ that is, for every $x\in \R^{m}:$
  $$
  g(x)=H_{\sig}(x):=\sup_{\sig \in \Sig}\inp{\sig, x}.
  $$
  
  The set $\Sig$ is characterized by
  $$
  \Sig=\set{\eta \in \R^{m}| g(v)\geq \inp{\eta, v}, v \in \R^{m}},
  $$
  and it is a compact one if and only if $g$ is bounded on the unitary disc.
 \end{thm}

%
 \end{prop}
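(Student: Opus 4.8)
The plan is to establish parts (a) and (b) directly from the definition of the generalized directional derivative, reserving Theorem \ref{cl:thm:4.25} for the subsequent identification of $f^{o}(x;\cdot)$ as a support function. Throughout I would fix a radius $\ep>0$ on which $f$ is Lipschitz of rank $K$ near $x$, and observe that for $y$ sufficiently close to $x$ and $t>0$ sufficiently small both $y$ and $y+tv$ lie in $B(x,\ep)$, so that the Lipschitz estimate may be applied to the difference quotient appearing in the $\limsup$.

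For (a), I would obtain finiteness and the bound simultaneously: since $\abs{f(y+tv)-f(y)}\le K\norm{tv}=tK\norm{v}$ for admissible $(y,t)$, the difference quotient is bounded in absolute value by $K\norm{v}$, and taking $\limsup$ yields $\abs{f^{o}(x;v)}\le K\norm{v}<\infty$. Positive homogeneity for $\lam>0$ follows from the substitution $s=\lam t$ inside the $\limsup$, while the case $\lam=0$ is immediate from $f^{o}(x;0)=0$. For subadditivity I would split
$$
\frac{f(y+t(v+w))-f(y)}{t}=\frac{f((y+tw)+tv)-f(y+tw)}{t}+\frac{f(y+tw)-f(y)}{t},
$$
and take $\limsup$ as $y\to x$, $t\downarrow 0$; the decisive observation is that the shifted base point $y+tw$ still converges to $x$, so the $\limsup$ of the first quotient is dominated by $f^{o}(x;v)$, giving $f^{o}(x;v+w)\le f^{o}(x;v)+f^{o}(x;w)$.

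For (b), the Lipschitz claim is an easy consequence of (a): subadditivity gives $f^{o}(x;w_1)\le f^{o}(x;w_2)+f^{o}(x;w_1-w_2)\le f^{o}(x;w_2)+K\norm{w_1-w_2}$, and by symmetry $\abs{f^{o}(x;w_1)-f^{o}(x;w_2)}\le K\norm{w_1-w_2}$. The upper semicontinuity is the delicate point. Given $(u_i,w_i)\to(x,v)$, I would, for each $i$, invoke the definition of the $\limsup$ to select $y_i$ with $\norm{y_i-u_i}<1/i$ and $t_i\in(0,1/i)$ realizing $f^{o}(u_i;w_i)$ to within $1/i$, and then write
$$
\frac{f(y_i+t_iw_i)-f(y_i)}{t_i}=\frac{f(y_i+t_iv)-f(y_i)}{t_i}+\frac{f(y_i+t_iw_i)-f(y_i+t_iv)}{t_i}.
$$
The second quotient is bounded by $K\norm{w_i-v}\to 0$ by the Lipschitz property, while $y_i\to x$ and $t_i\downarrow 0$ force the $\limsup$ of the first quotient to be at most $f^{o}(x;v)$; passing to the limit gives $\limsup_i f^{o}(u_i;w_i)\le f^{o}(x;v)$.

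With (a) established, the function $g=f^{o}(x;\cdot)$ is finite, positively homogeneous, subadditive and continuous (hence lower semicontinuous) with $g(0)=0$, so Theorem \ref{cl:thm:4.25} applies and realizes $g$ as the support function of the convex set $\partial f(x)=\set{\eta\in\R^m\mid f^{o}(x;v)\ge\inp{\eta,v}\ \forall\, v}$, which is compact because $\abs{g}\le K$ on the unit disc. The main obstacle I anticipate is the bookkeeping in the two $\limsup$ arguments (subadditivity and upper semicontinuity): in each the base point of the difference quotient is perturbed, and one must verify that it still converges to $x$ so that the relevant $\limsup$ remains controlled by $f^{o}(x;\cdot)$.
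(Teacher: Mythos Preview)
Your argument is correct and is essentially the standard proof one finds in Clarke's book. Note, however, that the paper does not actually supply its own proof of this proposition: it is quoted verbatim from \cite[Prop.~10.2]{CL1}, and the nested Theorem~\ref{cl:thm:4.25} is included not as a tool to prove (a)--(b) but rather so that, once (a) is in hand, one may invoke it to realize $f^{o}(x;\cdot)$ as the support function of the compact convex set that the paper then \emph{defines} to be $\partial f(x)$. You have read this relationship correctly despite the paper's somewhat misleading phrase ``the following results allows us to prove the above one''; parts (a) and (b) are indeed obtained directly from the definition, and Theorem~\ref{cl:thm:4.25} enters only afterwards. Since there is no proof in the paper to compare against, there is nothing further to add beyond confirming that your direct verification of finiteness, positive homogeneity, subadditivity, the Lipschitz estimate, and upper semicontinuity is sound, including the two ``shifted base point'' bookkeeping steps you flag.
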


\begin{defn}
 The \emph{generalized gradient} of a function $f$ on $x,$ denoted by $\pd{}f(x)$ in the unique compact convex non-empty subset of
$\R^{m}$ whose support function is $f^{o}(x; \cdot).$ Therefore:
$$\begin{cases}
   \zeta \in \partial f(x) \iff \forall v\in \R^{m}: f^{o}(x;v)\geq \zeta \cdot v\\
   \forall v\in \R^{m}: f^{o}(x;v)= \max\set{\zeta \cdot v| \zeta \in \partial f(x)}
  \end{cases}
$$
\end{defn}

As corollary, the \emph{generalized gradient} is well-defined.


\begin{prop}
 Let $f$ be a Lipschitz function of rank $K$ near $x.$ Then $\partial f(x)\subset B(0,K).$
\end{prop}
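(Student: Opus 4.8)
The plan is to combine two facts already available in the excerpt: the characterization of the generalized gradient via its support function, and the bound on the generalized directional derivative from Proposition \ref{cl:prop:10.2}(a). Recall that by definition $\partial f(x)$ is the unique compact convex set whose support function is $v\mapsto f^o(x;v)$, so that $\zeta\in\partial f(x)$ holds exactly when $\inp{\zeta,v}\le f^o(x;v)$ for every $v\in\R^m$. Since $f$ is Lipschitz of rank $K$ near $x$, part (a) of Proposition \ref{cl:prop:10.2} gives the linear bound $\abs{f^o(x;v)}\le K\norm{v}$ for all $v$. The idea is simply to chain these two inequalities and then test against a well-chosen direction.

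First I would fix an arbitrary $\zeta\in\partial f(x)$ and write down, for every $v\in\R^m$, the estimate
$$
\inp{\zeta,v}\le f^o(x;v)\le K\norm{v},
$$
where the first inequality is the defining property of the generalized gradient and the second is Proposition \ref{cl:prop:10.2}(a). If $\zeta=0$ there is nothing to prove, so I may assume $\zeta\ne 0$. Then I would specialize the direction to $v=\zeta$, which yields
$$
\norm{\zeta}^2=\inp{\zeta,\zeta}\le K\norm{\zeta}.
$$
Dividing by $\norm{\zeta}>0$ gives $\norm{\zeta}\le K$, i.e. $\zeta\in B(0,K)$. As $\zeta$ was arbitrary, this proves $\partial f(x)\subset B(0,K)$.

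There is no genuine obstacle here: the statement is an immediate corollary of the support-function description of $\partial f(x)$ together with the rank-$K$ control on $f^o(x;\cdot)$. The only point requiring any care is the logically clean invocation of the defining inequality $\inp{\zeta,v}\le f^o(x;v)$ for \emph{all} $v$, which is exactly what licenses the substitution $v=\zeta$; everything else is elementary. One may optionally remark that the inclusion is sharp in the differentiable case, where $\partial f(x)=\set{\grad f(x)}$ and $\norm{\grad f(x)}\le K$ recovers the classical Lipschitz bound.
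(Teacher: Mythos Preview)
Your proof is correct and follows essentially the same approach as the paper: both combine the defining inequality $\inp{\zeta,v}\le f^o(x;v)$ with the bound $\abs{f^o(x;v)}\le K\norm{v}$ from Proposition \ref{cl:prop:10.2}(a). The paper's proof stops at the inequality $\inp{\eta,v}\le K\norm{v}$ for all $v$, leaving the final step implicit, whereas you spell out the specialization $v=\zeta$ explicitly.
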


\begin{proof}
For any $\eta \in \partial f(x),$  by proposition \ref{cl:prop:10.2} we have that for all $v\in \R^{m}$
 $$
 \inp{\eta, v} \leq K \norm{v}.
 $$
\end{proof}

If $f$ is Lipschitz near $x,$ and differentiable at $x,$ then $f'(x)\subset \partial f(x),$ and the following result allows to generalize the concept of a critical point:
\begin{lem}[Fermat Rule]
 Let $f:\R^{m} \to \R$ be locally Lipschitz:
 \begin{enumerate}[(a)]
  \item If $f$ has a local maximum or minimum at $x,$ then $0\in \partial f(x).$
  \item If $0\notin \partial f(x),$ there exists a decreasing direction $v$ for $f$ at $x,$ that is,
  $$
  \limsup_{t \downarrow 0}\dfrac{f(x+tv)-f(x)}{t} <0.
  $$
 \end{enumerate}
\end{lem}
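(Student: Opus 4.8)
The plan is to reduce everything to the support-function description of the generalized gradient recorded above, namely that $\partial f(x)$ is the unique compact convex set with $f^{o}(x;v)=\max\set{\inp{\zeta, v}:\zeta\in\partial f(x)}$, together with the elementary observation that restricting the defining double limit of $f^{o}$ to the slice $y=x$ yields
$$
f^{o}(x;v)\geq \limsup_{t\downarrow 0}\frac{f(x+tv)-f(x)}{t}
$$
for every direction $v$.

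For part (a), I would first suppose that $f$ attains a local minimum at $x$. Then $f(x+tv)\geq f(x)$ for all $v$ and all small $t>0$, so the right-hand side of the displayed inequality is nonnegative, giving $f^{o}(x;v)\geq 0$ for all $v$. By the characterization $\zeta\in\partial f(x)\iff \forall v:\ f^{o}(x;v)\geq\inp{\zeta, v}$, taking $\zeta=0$ shows $0\in\partial f(x)$. For a local maximum I would pass to $-f$: the change of variables $u=y+tv$ in the defining limsup gives $(-f)^{o}(x;v)=f^{o}(x;-v)$, and feeding this into the support-function characterization yields $\partial(-f)(x)=-\partial f(x)$. Since $-f$ has a local minimum at $x$, the previous case gives $0\in\partial(-f)(x)=-\partial f(x)$, hence $0\in\partial f(x)$.

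For part (b), I would assume $0\notin\partial f(x)$. By Proposition \ref{cl:prop:10.2} the set $\partial f(x)$ is compact and convex (and nonempty), so it has a unique point $\zeta_{0}$ of minimal norm, with $\zeta_{0}\neq 0$. The nearest-point property of projection onto a convex set gives $\inp{\zeta, \zeta_{0}}\geq\norm{\zeta_{0}}^{2}$ for every $\zeta\in\partial f(x)$. Taking the candidate decreasing direction $v=-\zeta_{0}$, the support-function formula yields
$$
f^{o}(x;v)=\max_{\zeta\in\partial f(x)}\inp{\zeta,-\zeta_{0}}\leq -\norm{\zeta_{0}}^{2}<0.
$$
Combining this with the slice inequality from the first paragraph gives $\limsup_{t\downarrow 0}\frac{f(x+tv)-f(x)}{t}\leq f^{o}(x;v)<0$, so $v$ is a direction of decrease, as required.

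The computations are short; the main thing to get right is the maximum case of (a). The genuine work there is the verification that $(-f)^{o}(x;v)=f^{o}(x;-v)$ and its consequence $\partial(-f)(x)=-\partial f(x)$ — one must check that the change of variables $u=y+tv$ is legitimate inside the double limit (it is, since $y\to x$ forces $u\to x$, and the rank-$K$ Lipschitz bound of Proposition \ref{cl:prop:10.2} keeps every difference quotient finite) and then run the separating characterization in both directions. The separation step in (b) is standard convex analysis once the compactness and convexity of $\partial f(x)$ are in hand, which is exactly the content of the definition of the generalized gradient.
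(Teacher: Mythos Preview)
The paper states this lemma without proof; it is simply quoted as a known fact in the appendix on Clarke calculus, so there is nothing to compare against. Your argument is correct and is the standard one: the slice inequality $f^{o}(x;v)\geq\limsup_{t\downarrow 0}\dfrac{f(x+tv)-f(x)}{t}$ together with the support-function characterization of $\partial f(x)$ handles the minimum case of (a) directly, the identity $(-f)^{o}(x;v)=f^{o}(x;-v)$ (and hence $\partial(-f)(x)=-\partial f(x)$) reduces the maximum case to the minimum case, and in (b) the nearest-point projection of $0$ onto the compact convex set $\partial f(x)$ produces a direction $v=-\zeta_{0}$ with $f^{o}(x;v)\leq -\norm{\zeta_{0}}^{2}<0$.
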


We can obtain a version of the following classical result for generalized calculus:

\begin{thm}[Mean Value Theorem]\cite[Theorem 10.17]{CL1}
 \label{mvt} 
 Given $x,y \in \R^{m}$ such that $f$ is Lipschitz in a neighborhood of interval
 $$
 [x,y]=\set{\lam x + (1-\lam)y, \lam \in [0,1]}.
 $$
 
 Hence. there exist a point at $(x,y)=\operatorname{Interior}[x,y]$ such that
 $$
 f(y)-f(x)\in \inp{\partial f(z), y-x}.
 $$
\end{thm}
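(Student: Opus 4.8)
The plan is to reduce the statement to a one-dimensional Fermat-type argument along the segment $[x,y]$, following Lebourg's classical proof. First I would parametrize the segment by the affine map $\gamma(t)=x+t(y-x)$, $t\in[0,1]$, and introduce the auxiliary real-valued function
$$
g(t)=f(\gamma(t))+t\bigl(f(x)-f(y)\bigr).
$$
Since $f$ is Lipschitz in a neighborhood of $[x,y]$ and $\gamma$ is affine, $g$ is Lipschitz on $[0,1]$. The essential feature of this choice is that $g(0)=f(x)=g(1)$, so the endpoint values coincide.

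Next, because $g$ is continuous on the compact interval $[0,1]$, it attains both its maximum and its minimum; as the endpoint values agree, at least one of these extrema is attained at some interior point $t_0\in(0,1)$ (if $g$ is constant, any interior point serves). At such an interior extremum the Fermat Rule stated just above yields $0\in\partial g(t_0)$. I would then evaluate $\partial g(t_0)$ with the Clarke sum and chain rules. The term $t\mapsto t\bigl(f(x)-f(y)\bigr)$ is smooth with constant derivative $f(x)-f(y)$, while the chain rule along the affine map $\gamma$ gives $\partial(f\comp\gamma)(t_0)\subset\set{\inp{\zeta,y-x}:\zeta\in\partial f(\gamma(t_0))}$. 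Combining these via the sum rule and writing $z=\gamma(t_0)$,
$$
0\in\partial g(t_0)\subset\set{\inp{\zeta,y-x}:\zeta\in\partial f(z)}+\bigl(f(x)-f(y)\bigr).
$$
Hence there exists $\zeta\in\partial f(z)$ with $\inp{\zeta,y-x}=f(y)-f(x)$, i.e. $f(y)-f(x)\in\inp{\partial f(z),y-x}$ with $z$ interior to $[x,y]$, which is the assertion of Theorem \ref{mvt}.

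The main obstacle is justifying the chain rule $\partial(f\comp\gamma)(t_0)\subset\inp{\partial f(\gamma(t_0)),y-x}$ for the generalized gradient, since this is the one nontrivial calculus fact not recorded verbatim in the excerpt. I would establish it directly from the directional-derivative definition: for the affine $\gamma$ one verifies the inequality $(f\comp\gamma)^{o}(t_0;s)\le f^{o}(\gamma(t_0);s(y-x))$ for all $s\in\R$, and then the support-function characterization of Theorem \ref{cl:thm:4.25}, together with the definition of $\partial$ as the compact convex set whose support function is $f^{o}(x;\cdot)$, converts this directional inequality into the stated inclusion of subdifferentials. The sum rule is obtained in the same manner from the subadditivity of $f^{o}$ recorded in Proposition \ref{cl:prop:10.2}. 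The degenerate case in which $g$ is constant (so that every interior point is simultaneously a maximum and a minimum) is routine and requires no separate treatment beyond choosing an arbitrary $t_0\in(0,1)$.
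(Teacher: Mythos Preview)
Your proposal is correct and follows essentially the same route as the paper's proof: the paper defines the identical auxiliary function $\theta(t)=f(x_t)+t(f(x)-f(y))$ with $x_t=x+t(y-x)$, observes $\theta(0)=\theta(1)$, applies the Fermat rule at an extremal point, and invokes the chain-rule inclusion $\partial(f\circ\gamma)(t)\subset\inp{\partial f(x_t),y-x}$, which it isolates as a separate lemma proved exactly via the directional-derivative inequality $(f\circ\gamma)^{o}(t;v)\le f^{o}(x_t;v(y-x))$ that you outline. If anything, you are slightly more careful than the paper in insisting that the extremal $t_0$ lie in the open interval $(0,1)$.
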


We will use the following special case of the \emph{chain rule}:
\begin{lem}
Suppose that $f$ is Lipschitz in a neighborhood of interval $[x,y].$ Define $x_{t}=x+t(y-x), t\in[0,1]$ and 
 $$
 g:[0,1] \to \R, g(t)=f(x_{t}).
 $$
 
 Then $\partial g(t) \subset \inp{\partial f(x_{t}), y-x}.$
\end{lem}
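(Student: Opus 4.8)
The plan is to reduce the asserted inclusion to a pointwise inequality between support functions. First note that since $f$ is Lipschitz on a neighborhood of the segment $[x,y]$ and the map $t\mapsto x_t=x+t(y-x)$ is affine, the composite $g$ is Lipschitz on $[0,1]$; hence $\partial g(t)$ is a well-defined compact convex subset of $\R$, namely the unique one whose support function is $g^o(t;\cdot)$. Likewise, writing $w=y-x$, the set $\inp{\partial f(x_t),w}=\set{\inp{\zeta,w}:\zeta\in\partial f(x_t)}$ is the image of the compact convex set $\partial f(x_t)$ under the linear functional $\zeta\mapsto\inp{\zeta,w}$, so it is again compact and convex in $\R$. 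By Theorem \ref{cl:thm:4.25} (or the standard fact that, for compact convex sets, domination of support functions is equivalent to inclusion), it suffices to show that for every $\tau\in\R$,
\[ g^o(t;\tau)\le \sup_{\zeta\in\partial f(x_t)}\tau\inp{\zeta,w}. \]

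The right-hand side is exactly $f^o(x_t;\tau w)$: indeed $\sup_{\zeta\in\partial f(x_t)}\tau\inp{\zeta,w}=\sup_{\zeta\in\partial f(x_t)}\inp{\zeta,\tau w}=f^o(x_t;\tau w)$ by the defining property of the generalized gradient recorded after Theorem \ref{cl:thm:4.25}. So the whole statement collapses to the single inequality $g^o(t;\tau)\le f^o(x_t;\tau w)$. To prove this I would expand the left-hand side directly from the definition of the generalized directional derivative. Since $g(s)=f(x_s)=f(x+sw)$, one has $g(s+\lambda\tau)=f(x_s+\lambda\tau w)$, and therefore
\[ g^o(t;\tau)=\limsup_{s\to t,\ \lambda\downarrow 0}\frac{f(x_s+\lambda\tau w)-f(x_s)}{\lambda}. \]

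As $s\to t$ we have $x_s=x_t+(s-t)w\to x_t$, so the pairs $(x_s,\lambda)$ occurring in this $\limsup$ form a subfamily of all pairs $(u,\lambda)$ with $u\to x_t$ and $\lambda\downarrow 0$ that appear in
\[ f^o(x_t;\tau w)=\limsup_{u\to x_t,\ \lambda\downarrow 0}\frac{f(u+\lambda\tau w)-f(u)}{\lambda}. \]
A $\limsup$ taken over a smaller index set cannot exceed the one over the larger set, which yields $g^o(t;\tau)\le f^o(x_t;\tau w)$ and completes the argument.

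The main (and essentially only) subtlety to watch is the direction of this last inequality: the generalized directional derivative of the one-variable restriction $g$ samples $f$ only along the segment, whereas $f^o(x_t;\cdot)$ samples $f$ over a full neighborhood of $x_t$; it is precisely this enlargement of the limiting set that forces the inequality — and only the inequality, since equality may fail — and hence yields an \emph{inclusion} rather than an equality of subdifferentials. Everything else is routine bookkeeping: checking that $g$ is Lipschitz, that the image set is compact and convex, and that the support-function identification of $\partial g(t)$ applies.
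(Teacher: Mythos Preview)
Your proof is correct and follows essentially the same route as the paper: reduce the inclusion to a support-function inequality $g^{o}(t;\tau)\le f^{o}(x_{t};\tau w)$, then obtain the latter by the ``smaller index set'' argument for the $\limsup$. The only cosmetic difference is that the paper exploits that both sides are intervals in $\R$ and therefore checks just $v=\pm 1$, whereas you verify the inequality for all $\tau\in\R$; by positive homogeneity these are equivalent.
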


\begin{proof}
 Since both sets in the inclusion are indeed intervals on $\R,$ it is enough to show that for every $v=\pm 1:$
 $$
 \max\set{\partial g(t)v} \leq \max \set{\inp{\partial f(x_{t}), y-x}v}.
 $$
 However, in this case $g^{o}(t;v)=\partial g(t).$ Hence 
 \begin{align*}
  g^{o}(t;v)&= \limsup\set{\frac{g(s+\lam v)-g(s)}{\lam} \at{} 
  s\to t,  \lam \downarrow 0 } \\  
  &=\limsup_{_{s\to t,\lam \downarrow 0}} \set{\frac{f(x+[s+\lam v](y-x)) -f(x+s(y-x))}{\lam} } \\  
  &\leq\limsup\set{\frac{f(z+\lam v (y-x)) -f(z)}{\lam} \at{} 
  z\to x_{t},  \lam \downarrow 0 } \\  
  &=f^{o}(x_{t}; v(y-x))=\max \inp{\partial f(x_{t}), v(y-x)}.
 \end{align*}

\end{proof}

\begin{proof}[Proof of the Mean Value Theorem \ref{mvt}]
Consider the function $:[0,1]\to \R$ defined by
 $$
 \th(t)=f(x_{t})+t\left( f(x)-f(y) \right).
 $$
 Note that $\th(0)=\th(1)=f(x),$ so that there exists $t^{*}\in[0,1]$ at which $\t$ attains an extremal value. In this case, $0\in \partial \th(t^{*}),$ so that 
 $$
 0\in f(x)-f(y) +\inp{\partial f(x_{t^{*}}), y-x},
 $$
and we have proved our Theorem when $z=x_{t^{*}}.$
\end{proof}

Finally. the following result says that the \emph{generalized gradient} is closed:
\begin{thm}\cite[prop. A.2]{W1}
\label{w1:A.2}
Let $f$ be Lipschitz of rank $K$ near $x$ and $x_{i}, v_{i}$ sequences on $\R^{m}$ such that
 $$\begin{cases}
 x_{i} \to x\\ \zeta_{i} \in \partial f(x_{i}).
 \end{cases}$$
If $\zeta_{i} \to \zeta,$ then $\zeta \in \partial f(x).$
\end{thm}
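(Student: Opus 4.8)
The plan is to reduce everything to the support-function characterization of the generalized gradient recorded right after Proposition \ref{cl:prop:10.2}, namely that $\zeta \in \partial f(x)$ if and only if $\inp{\zeta, v}\le f^o(x;v)$ for every $v\in\R^m$. Thus, fixing an arbitrary direction $v\in\R^m$, it suffices to establish the single scalar inequality $\inp{\zeta,v}\le f^o(x;v)$, and then to let $v$ range over all of $\R^m$.

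First I would note that, since $f$ is Lipschitz of rank $K$ near $x$ and $x_i\to x$, for $i$ large the points $x_i$ lie in a neighborhood on which $f$ is still Lipschitz of rank $K$; hence each $f^o(x_i;\cdot)$ is well defined and $\partial f(x_i)$ makes sense. Applying the characterization above \emph{at the point} $x_i$, the hypothesis $\zeta_i\in\partial f(x_i)$ gives, for every $v\in\R^m$ and all large $i$,
\[
\inp{\zeta_i,v}\le f^o(x_i;v).
\]

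The heart of the argument is to pass to the limit $i\to\infty$ in this inequality with $v$ held fixed. On the right-hand side I would invoke the upper semicontinuity of the map $(u,w)\mapsto f^o(u;w)$ at $(x;v)$ furnished by Proposition \ref{cl:prop:10.2}(b): since $(x_i,v)\to(x,v)$ jointly, this yields $\limsup_i f^o(x_i;v)\le f^o(x;v)$. On the left-hand side, $\inp{\zeta_i,v}\to\inp{\zeta,v}$ because $\zeta_i\to\zeta$. Chaining these two facts gives
\[
\inp{\zeta,v}=\lim_i\inp{\zeta_i,v}\le\limsup_i f^o(x_i;v)\le f^o(x;v).
\]
As $v\in\R^m$ was arbitrary, the characterization of the generalized gradient forces $\zeta\in\partial f(x)$, which is the claim.

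I do not expect a serious obstacle here: the proof is essentially a one-line limit argument once the right characterization is in hand. The only points requiring care are to apply upper semicontinuity in the \emph{joint} variable $(u,w)$ rather than in $u$ alone, and to keep the inequalities oriented correctly when passing to $\limsup$. The Lipschitz bound $\partial f(x_i)\subset B(0,K)$ is automatically consistent with the assumed convergence $\zeta_i\to\zeta$ and need not be invoked explicitly.
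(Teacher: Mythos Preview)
Your proof is correct and follows essentially the same route as the paper: fix a direction $v$, use the support-function characterization $\zeta_i\in\partial f(x_i)\Rightarrow\inp{\zeta_i,v}\le f^o(x_i;v)$, and pass to the limit via upper semicontinuity of $(u,w)\mapsto f^o(u;w)$. If anything, your version is slightly cleaner---the paper extracts a subsequence to make $\zeta_i\cdot v\to\zeta\cdot v$, which is redundant since $\zeta_i\to\zeta$ already forces this for the full sequence.
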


\begin{proof}
 Fix $v \in \R^{m}.$ For every $i,$ we have that $$f^{o}(x_{i};v) \geq \zeta_{i}\cdot v.$$ The sequence 
$\zeta_{i}\cdot v$ is bounded on $\R$ and it contains term which are arbitrarly closed to $\zeta\cdot v.$ 
Extract a subsequence of $\zeta_{i}$ (which we denote in the same way as the original one) such that
$\zeta_{i} \cdot v \to \zeta \cdot v.$ Taking the limit in the above inequality and whereas $f^{o}$ 
is upper semicontinuous en $x,$ we conclude that
$$
f^{o}(x;v)\geq \zeta \cdot v.
$$

Since $v$ is arbitrary, it follows that $\zeta \in \partial f(x).$
\end{proof}

\section{The Minimax Principle}

\label{ap:principiominimax}


\begin{defn}
 \begin{enumerate}
  \item A sequence $(u_{m})$ in a manifold $M$ is called of \emph{Palais-Smale} for $E\in C^{1}(M)$ if $\abs{E(u_{m})} \leq c$ uniformly in $m$ and 
  $$
  \lim_{m\to \infty} \norm{DE(u_{m})} = 0.
  $$
  \item We say that $E$ satisfies \emph{Palais-Smale compactness condition  $(PS)$} if every P-S sequence for $E$ has a strongly convergent subsequence.
 \end{enumerate}

\end{defn}

Palais-Smale condition allows us to distinguish certain family of neighborhood of critical point of a given functional $E;$ and hence, it will useful to characterize regular values of $E.$

For $\b \in \R,$ $\del >0,$ $\r>0$ define
\begin{align*}
 E_{\b}&=\set{u \in V: E(u)<\b}\\
 K_{\b}&=\set{u\in V: E(u)=\b, DE(u)=0}\\
 N_{\b, \del}&=\set{u \in V: \abs{E(u)-\b}<\del, \norm{DE(u)}<\del}\\
 U_{\b, \r}&=\bigcup_{u\in K_{\b}}
 \set{v \in V: \norm{v-u}< \r}.
\end{align*}

\begin{prop}\cite[lemma 2.3]{STR}
 \label{str:lem:2.3}
Suppose that $E$ satisfies $(P.S).$ Then for every $\beta \in \R,$ it follows that:
\begin{enumerate}
 \item $K_{\beta}$ is a compact subset;
 \item both $\set{U_{\b, \r}}_{\r>0}$ and $\set{N_{\b, \del}}_{\del >0}$ are fundamental systems of neighborhoods for $K_{\b}.$
\end{enumerate}

 \end{prop}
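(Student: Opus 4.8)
The plan is to derive both assertions from a single compactness mechanism, invoking the Palais-Smale hypothesis in two slightly different forms. Recall that a family $\{W_i\}$ of neighborhoods of $K_\beta$ is a \emph{fundamental system} (a neighborhood basis) when each $W_i$ is a neighborhood of $K_\beta$ and every neighborhood of $K_\beta$ contains some $W_i$. First I would establish compactness of $K_\beta$. Given any sequence $(u_m)$ in $K_\beta$, one has $E(u_m)=\beta$ and $\norm{DE(u_m)}=0\to 0$, so $(u_m)$ is a Palais-Smale sequence; by $(PS)$ it admits a strongly convergent subsequence $u_{m_k}\to u$, and continuity of $E$ and $DE$ (as $E\in C^1$) forces $E(u)=\beta$, $DE(u)=0$, that is $u\in K_\beta$. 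Hence $K_\beta$ is sequentially compact.

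For the family $\{U_{\beta,\r}\}$, each member is open, being a union of open balls, and contains $K_\beta$, so it is a neighborhood. To see the family is fundamental, I would argue by contradiction: if some open $\mathcal{O}\supset K_\beta$ contained no $U_{\beta,\r}$, then choosing $\r=1/m$ would produce $v_m\in U_{\beta,1/m}\setminus\mathcal{O}$ together with $u_m\in K_\beta$ satisfying $\norm{v_m-u_m}<1/m$. Compactness of $K_\beta$ yields a subsequence $u_{m_k}\to u\in K_\beta\subset\mathcal{O}$, whence $v_{m_k}\to u$ as well; but $\mathcal{O}$ is open, so $v_{m_k}\in\mathcal{O}$ for large $k$, contradicting $v_m\notin\mathcal{O}$. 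Only the compactness obtained above is used here.

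For the family $\{N_{\beta,\del}\}$ each member is open by continuity of $E$ and $\norm{DE}$ and manifestly contains $K_\beta$. The fundamental property again follows by contradiction: if an open $\mathcal{O}\supset K_\beta$ contained no $N_{\beta,\del}$, taking $\del=1/m$ would give $u_m\in N_{\beta,1/m}\setminus\mathcal{O}$ with $\abs{E(u_m)-\beta}<1/m$ and $\norm{DE(u_m)}<1/m$, so that $(u_m)$ is a Palais-Smale sequence; by $(PS)$ a subsequence converges to some $u\in K_\beta\subset\mathcal{O}$, contradicting $u_{m_k}\notin\mathcal{O}$. I expect this last step to be the crux: unlike the $U$ family it cannot be handled by compactness of $K_\beta$ alone, since nothing purely topological prevents the sets $N_{\beta,\del}$ from retaining points far from $K_\beta$ through almost-critical sequences that escape, and it is exactly the global $(PS)$ condition that excludes such behavior. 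With each family shown to consist of neighborhoods and to be cofinal in the neighborhood filter of $K_\beta$, both are fundamental systems.
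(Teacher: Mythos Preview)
Your argument is correct and is exactly the standard proof (as in Struwe's \emph{Variational Methods}, Lemma~2.3): compactness of $K_\beta$ from $(PS)$ applied to a sequence in $K_\beta$, the $U_{\beta,\rho}$ basis from compactness of $K_\beta$ alone, and the $N_{\beta,\delta}$ basis by a direct appeal to $(PS)$ on an almost-critical sequence. Note, however, that the paper does not give its own proof of this proposition---it is simply quoted from \cite{STR} without argument---so there is no in-paper proof to compare against; your write-up would serve perfectly well as the omitted justification.
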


 \begin{rem}
  In particular, if $K_{\b}=\emptyset$ for some $\b \in \R$ there exists $\del>0$ such that $N_{\b, \del}=\emptyset;$ that is, the differential $DE(u)$ is uniformly bounded in norm for all $u\in V,$ far enough from origin but with $E(u)$ close to $\b.$
 \end{rem}


 Denote by
 $$
 \til{V}=\set{u \in V: DE(u)\neq 0}
 $$ the set of regular points of $E.$ Instead of a gradient, which requires the existence of well-defined inner product, we will use the following:
 \begin{defn}
  \label{str:defn:3.1}
  A \emph{pseudogradient} vector field for $E$ is a locally Lipschitz continuous one $v\in \til{V} \to V$ for which the following conditions hold:
  \begin{enumerate}
   \item $\norm{v(u)}< 2 \min\set{\norm{DE(u)},1};$
   \item for all $u\in \til{V}:$
   $$
   \inp{v(u), DE(u)} > \min\set{\norm{DE(u)},1}\norm{DE(u)}.
   $$
  \end{enumerate}

 \end{defn}

 \begin{lem}\cite[lemma 3.2]{STR}
  \label{str:lem:3.2}
  Any functional $E\in C^{1}(V)$ admits a pseudogradient vector field $v:\til{V}\to V.$
 \end{lem}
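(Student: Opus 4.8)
The plan is to follow the classical construction in three movements: first produce, at each regular point, a single vector realizing both defining inequalities of Definition \ref{str:defn:3.1} with strict sign; next observe that, by continuity of $DE$, the very same vector keeps working throughout a whole neighborhood; and finally glue these local choices by means of a locally Lipschitz partition of unity, exploiting that both conditions are preserved under convex combinations.

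First I would carry out the pointwise construction. Fix $u\in\til{V}$ and write $m(u)=\min\set{\norm{DE(u)},1}$. Since $\norm{DE(u)}=\sup_{\norm{h}=1}\inp{h,DE(u)}$ and $DE(u)\neq 0$, I can choose $w=w(u)$ with $\norm{w}=1$ and $\inp{w,DE(u)}>\tfrac23\norm{DE(u)}$. Setting $z(u)=\tfrac32 m(u)\,w$ then gives $\norm{z(u)}=\tfrac32 m(u)<2m(u)$ together with $\inp{z(u),DE(u)}>m(u)\norm{DE(u)}$, so both pseudogradient inequalities hold at $u$ with strict sign.

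Next I would upgrade this to a local statement. Because $E\in C^{1}$, the assignment $u\mapsto DE(u)$ is continuous into the dual, hence $u\mapsto\norm{DE(u)}$ and $u\mapsto\inp{z,DE(u)}$ are continuous for each fixed $z$. Therefore the two strict inequalities satisfied by the fixed vector $z(u_0)$ at $u_0$ persist on an open neighborhood $N_{u_0}\subset\til{V}$: for all $u\in N_{u_0}$,
\[\norm{z(u_0)}<2\min\set{\norm{DE(u)},1},\qquad \inp{z(u_0),DE(u)}>\min\set{\norm{DE(u)},1}\norm{DE(u)}.\]
This produces an open cover $\set{N_{u_0}}_{u_0\in\til{V}}$ of $\til{V}$, each member carrying a constant vector valid throughout it.

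Finally I would glue. As $\til{V}$ is a metric space it is paracompact and admits a locally finite, locally Lipschitz partition of unity $\set{\rho_i}$ subordinate to a refinement $\set{N_i}$ of this cover; letting $z_i$ denote a vector valid on $N_i$, I define $v(u)=\sum_i\rho_i(u)z_i$. Local finiteness makes $v$ a locally finite sum of locally Lipschitz terms, hence locally Lipschitz. Both conditions survive because they pass to convex combinations: whenever $\rho_i(u)>0$ one has $u\in N_i$, so $\norm{v(u)}\le\sum_i\rho_i(u)\norm{z_i}<2\min\set{\norm{DE(u)},1}$ and $\inp{v(u),DE(u)}=\sum_i\rho_i(u)\inp{z_i,DE(u)}>\min\set{\norm{DE(u)},1}\norm{DE(u)}$, the strictness coming from $\sum_i\rho_i(u)=1$. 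I expect the only genuine subtlety to be the existence of a locally Lipschitz — not merely continuous — partition of unity on $\til{V}$, since this is precisely what guarantees that $v$ itself is locally Lipschitz rather than only continuous; it is standard for metric spaces but is the step that must be invoked with care.
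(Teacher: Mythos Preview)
Your argument is correct and is precisely the classical construction; the paper itself does not prove this lemma but merely quotes it from \cite[lemma 3.2]{STR}, where the proof is exactly the three-step scheme you describe. There is nothing to compare.
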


\begin{thm}[Deformation Lemma]\cite[Theorem 3.4]{STR}
Suppose that $E\in C^{1}(V)$ satisfies $(PS).$ Fix $\b \in \R,$ $\bar{\ep}>0$ and $N$ some neighborhood of $K_{\b}.$ Thus there exist $\ep \in (0, \bar{\ep})$ and a uniparametric continuous family of homeomorphisms $$\F(\cdot,t):V\to V, \, 0\leq t < t$$ with the following properties:
\begin{enumerate}
 \item $\F(u,t)=u,$ if $t=0$ or $DE(u)=0$ or $\abs{E(u)-\b}\geq \bar{\ep};$
 \item $E(\F(u,t))$ is non-decreasing  on $t$ for all $u\in V;$
 \item $\F(E_{\b+\ep}\backslash N,1)\subset E_{\b-\ep},$ and $\F(E_{\b+\ep},1)\subset E_{\b-\ep}\cup N.$
\end{enumerate}

Moreover $\F:V \times [0,\infty) \to V$ has semigroup property, that is, 
$$
\forall s,t \geq 0: \, \F(\cdot, t) \circ \F(\cdot,t)=\F(\cdot,s+t).
$$
\end{thm}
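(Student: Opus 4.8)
The plan is to realize $\F$ as the flow of a truncated pseudo-gradient field, the standard device of critical point theory; the whole argument rests on first converting the Palais--Smale hypothesis into a uniform lower bound for $\norm{DE}$ away from $K_\b$. Using Proposition \ref{str:lem:2.3} I would fix a radius $\r>0$ with $U_{\b,\r}\subset N$, and then claim there exist $\del>0$ and $\ep_0\in(0,\bar\ep)$ such that
\[
\norm{DE(u)}\ge\del \quad\text{whenever}\quad \abs{E(u)-\b}\le 2\ep_0 \ \text{ and }\ u\notin U_{\b,\r/2}.
\]
This is exactly where $(PS)$ enters, in the spirit of the Remark following Proposition \ref{str:lem:2.3}: were the bound to fail one could extract a Palais--Smale sequence at level $\b$ staying at distance $\ge\r/2$ from $K_\b$, whose $(PS)$-limit would be a critical point in $K_\b$ yet bounded away from it, a contradiction. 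A consequence worth recording is that \emph{all} critical points with energy in $[\b-2\ep_0,\b+2\ep_0]$ already lie inside $U_{\b,\r/2}$.

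Next I would invoke Lemma \ref{str:lem:3.2} to choose a pseudo-gradient field $v$ on $\til V$, pick Lipschitz cut-offs $\eta$ (equal to $1$ on $\set{\abs{E-\b}\le\ep}$ and $0$ on $\set{\abs{E-\b}\ge 2\ep}$) and $\chi$ (equal to $0$ on $U_{\b,\r/2}$ and $1$ off $U_{\b,\r}$), and set $e(u)=-\eta(u)\chi(u)\,v(u)$, extended by $0$ where $DE=0$. The lower bound of the first step guarantees that the support of $\eta\chi$ sits inside $\til V$ and misses $U_{\b,\r/2}\supset K_\b$, so $v$ is defined there and $e$ is genuinely locally Lipschitz and bounded ($\norm{e}\le\norm{v}<2$). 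Hence its flow $\F(\cdot,t)$ exists for all $t$, and elementary ODE theory (existence, uniqueness, continuous dependence) makes each $\F(\cdot,t)$ a homeomorphism satisfying the semigroup property. Property (1) is immediate from the supports of $\eta,\chi$ and from $\F(u,0)=u$, while property (2) follows from the pseudo-gradient inequality of Definition \ref{str:defn:3.1}, since
\[
\frac{d}{dt}E(\F(u,t))=\inp{DE,e}=-\eta\chi\,\inp{DE,v}\le 0,
\]
so $t\mapsto E(\F(u,t))$ is non-increasing.

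The main obstacle is property (3), the quantitative descent. Along any trajectory confined to the active region $\set{\abs{E-\b}\le\ep}\setminus U_{\b,\r}$ one has $\eta=\chi=1$ and $\norm{DE}\ge\del$, whence
\[
\frac{d}{dt}E(\F(u,t))=-\inp{DE,v}\le-\min\set{\norm{DE},1}\,\norm{DE}\le-\min\set{\del,1}\,\del=:-\del'.
\]
Choosing $\ep\in(0,\ep_0)$ with $2\ep<\del'$ forces the energy to fall by more than $2\ep$ before time $1$. Thus if $u\in E_{\b+\ep}$ and the trajectory never meets $U_{\b,\r}$, then since $E$ is non-increasing it cannot remain in $\set{E\ge\b-\ep}$ for all of $[0,1]$, giving $\F(u,1)\in E_{\b-\ep}$; this yields $\F(E_{\b+\ep}\setminus N,1)\subset E_{\b-\ep}$, and together with the trivial alternative ``the trajectory enters $U_{\b,\r}\subset N$'' it yields $\F(E_{\b+\ep},1)\subset E_{\b-\ep}\cup N$. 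The one delicate point is the bookkeeping for trajectories that enter and later exit the neighborhood of $K_\b$; it is handled by exploiting the monotonicity of $E$ together with the definite descent rate $\del'$ that holds everywhere outside $U_{\b,\r}$. Everything else is routine ODE theory and the defining estimates for a pseudo-gradient.
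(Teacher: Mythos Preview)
Your approach is correct and is exactly the standard one: the paper does not supply its own proof of this theorem but simply cites \cite[Theorem 3.4]{STR} and remarks afterwards that ``$\F:V\times[0,\infty)$ is obtained integrating a truncated pseudogradient vector field in suitable manner,'' which is precisely what you do. (You also implicitly correct the typo in the statement: $E(\F(u,t))$ is non\emph{increasing}, not non-decreasing.)
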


Since $\F:V \times [0,\infty)$ is obtained integrating a truncated pseudogradient vector field in suitable manner, $\F$ is called local pseudogradient flow.


\begin{defn}
 Let $\F:M \times [0,\infty) \to M$ be a semiflow in a manifold $M.$ A family $\mF$ of subsets of $M$ is called positively $\F-$invariant if $\F(F,t)\in \mF$ for every $F \in \mF, t \geq 0.$
\end{defn}


\begin{thm}[Minimax Principle]\cite[Theorem 4.2]{STR}
\label{thm:principiominimax}
 Suppose that $M$ is a complete Finsler manifold of class $C^{1,1}$ and $E\in C^{1}(M)$ satisfies the $(PS)$ condition. Also suppose that $\mF \subset \mP(M)$ is a collection of subsets invariant with respect to every semiflow $\F:M \times [0, \infty) \to M$ satisfying
 \begin{enumerate}[(a)]
 \item $\F(\cdot, 0)=\id,$
 \item $\F(\cdot,t)$ is a homeomorphism of $M$ for every $t\geq 0,$ and 
 \item  $E(\F(u,t))$ is non-decreasing in $t$ for every $u \in M.$ 
 \end{enumerate}
 
 Therefore, if
 $$
 \b=\inf_{F \in \mF} \sup_{u \in F} E(u)
 $$
is finite, $\b$ is a critical value of $E.$
\end{thm}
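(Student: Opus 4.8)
The plan is to argue by contradiction, using the Deformation Lemma stated just above. Suppose $\b$ is finite but is \emph{not} a critical value of $E$; then $K_{\b}=\emptyset$. Since $E$ satisfies the $(PS)$ condition, the Deformation Lemma applies at the level $\b$: fixing any $\bar{\ep}>0$ and taking as neighborhood of $K_{\b}=\emptyset$ the empty set $N=\emptyset$, we obtain $\ep\in(0,\bar{\ep})$ together with a family of homeomorphisms $\F(\cdot,t):M\to M$ enjoying properties (1)--(3) of that lemma and the semigroup property. In particular $\F$ is a semiflow, and I would first check that it lies in the admissible class for the hypothesis on $\mF$: property (1) at $t=0$ gives $\F(\cdot,0)=\id$, each $\F(\cdot,t)$ is by construction a homeomorphism, and the monotonicity of $t\mapsto E(\F(u,t))$ is exactly condition (c). Hence $\mF$ is positively invariant under this particular $\F$.

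Next I would exploit the definition of $\b$ as an infimum. Given the $\ep$ produced above, there is a set $F\in\mF$ with
$$
\sup_{u\in F}E(u)<\b+\ep,
$$
that is, $F\subset E_{\b+\ep}$. Applying the deformation at time $1$ and using property (3) with $N=\emptyset$, namely $\F(E_{\b+\ep},1)\subset E_{\b-\ep}$, gives
$$
\F(F,1)\subset\F(E_{\b+\ep},1)\subset E_{\b-\ep}.
$$
By the invariance established in the first step, $\F(F,1)\in\mF$, so that $\sup_{u\in\F(F,1)}E(u)\le\b-\ep$. This contradicts the defining relation
$$
\b=\inf_{F'\in\mF}\sup_{u\in F'}E(u)\le\sup_{u\in\F(F,1)}E(u)\le\b-\ep.
$$
Therefore $K_{\b}\neq\emptyset$ and $\b$ is a critical value, as claimed.

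The routine parts here are the selection of the near-optimal $F$ and the chain of inclusions. I expect the genuine point requiring care to be the verification that the deformation $\F$ furnished by the Deformation Lemma really belongs to the class of semiflows for which $\mF$ is invariant --- in particular matching the monotonicity convention (c) against property (2) of the lemma --- together with the observation that when $K_{\b}=\emptyset$ the empty set is a legitimate choice of neighborhood $N$, so that property (3) degenerates to $\F(E_{\b+\ep},1)\subset E_{\b-\ep}$. Completeness of the Finsler manifold $M$ and the $(PS)$ condition enter only implicitly, through the Deformation Lemma, to guarantee global existence of the (truncated) pseudogradient flow and compactness of the critical sets $K_{\b}$.
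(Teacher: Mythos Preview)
The paper does not actually give its own proof of this theorem: it is quoted as a reference result from \cite[Theorem 4.2]{STR}, and the surrounding appendix merely records the statements of the Deformation Lemma and the Minimax Principle without argument. Your proof is the standard one --- contradiction via the Deformation Lemma applied with $N=\emptyset$ at a purportedly regular level $\b$ --- and it is correct; this is exactly the argument one finds in Struwe. One minor point worth flagging: both the Deformation Lemma (item 2) and condition (c) here say ``non-decreasing'' where ``non-increasing'' is clearly intended (the flow pushes sublevel sets \emph{down}, $E_{\b+\ep}\to E_{\b-\ep}$); you implicitly read it the right way, and your parenthetical remark about ``matching the monotonicity convention'' shows you noticed the discrepancy.
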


\begin{exmp}
 Let $X$ be a topological space and $[X,M]$ the set of free-homotopy classes $[f]$ of continuous maps  $f:X \to M.$ For $[f]\in[X,M]$ define 
 $$
 \mF=\set{g(X) | g\in[f]}.
 $$
 
 Since $[\F \comp f]=[f]$ for any homeomorphism $\F$ de $M$ homotopic to identity, the family $\mF$ is invariant under such maps $\F.$ Hence if
 $$\b=\inf_{F \in \mF} \sup_{u\in F} E(u)$$
 is finite, $\b$ is a critical value.
\end{exmp}


\begin{defn} A point $\x\in X$ is called \emph{critical} for $f$ if $0\in \p f(\x);$ the value $f(\x)$ is called \emph{critical} for $f.$ Note that the \emph{critical set} $crit(f)$ of $f,$ consisting of every critical point is closed on $X.$

Define  
$$
\lam(\x)=\min_{w\in \p f(\x)}\norm{w}_{X^{*}}.
$$
We say that $f$ satisfies the \emph{Palais-Smale condition} $(PS)$ if every subsequence $(\x_{n})$ such  that $f(\x_{n})$ is bounded  and $\lam(\x_{n})\to 0$ has a convergent subsequence whose limit is a critical point of $f$ and thus there exists $y_{n}\in \p f(\x_{n})$ such that $y_{n}\to 0.$ 
\end{defn}

\begin{prop}[C.f. \cite{W1}, example A.4]
  \label{w1:exmp:A.4}
 $(PS)$ condition holds where $\Lip{f-Q}\leq \infty$ for some non-degenerate quadratic form  $Q:X\to\R.$ In this case $crit(f)$ is \emph{compact.}
\end{prop}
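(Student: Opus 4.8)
The plan is to exploit the additive splitting $f = Q + g$, where $g := f - Q$ is Lipschitz of some rank $L = \Lip{g} < \infty$ and $Q(\x) = \tfrac12\inp{B\x,\x}$ for a symmetric invertible matrix $B$ (non-degeneracy of $Q$). Since $Q$ is smooth, I would first record the sum rule $\p f(\x) = B\x + \p g(\x)$. This follows directly from the definition of the generalized directional derivative: because the $Q$-part of the difference quotient converges genuinely to $\inp{B\x, v}$, one gets $f^{o}(\x;v) = \inp{B\x, v} + g^{o}(\x;v)$, and dualizing gives the claimed equality of subdifferentials. Combined with the bound $\p g(\x)\subset B(0,L)$ valid for any Lipschitz function of rank $L$, every $w\in\p f(\x)$ has the form $w = B\x + v$ with $\norm{v}\le L$, so
$$\norm{w} \geq \norm{B\x} - L \geq \norm{B^{-1}}^{-1}\norm{\x} - L.$$
Taking the minimum over $w$ yields the coercivity estimate $\lam(\x) \geq \norm{B^{-1}}^{-1}\norm{\x} - L$.

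To verify the $(PS)$ condition, I would take a Palais--Smale sequence $(\x_n)$, so that $f(\x_n)$ is bounded and $\lam(\x_n)\to 0$. The coercivity estimate forces $\norm{\x_n}\le \norm{B^{-1}}\bigl(\lam(\x_n)+L\bigr)$, hence $(\x_n)$ is bounded; since the domain is finite dimensional ($X=\R^q$), a subsequence $\x_{n_k}\to\x^{*}$ converges. By definition of $\lam$ there are $w_{n_k}\in\p f(\x_{n_k})$ with $\norm{w_{n_k}}=\lam(\x_{n_k})\to 0$, so $w_{n_k}\to 0$. As $f$ is locally Lipschitz near $\x^{*}$ (being smooth plus Lipschitz), the closedness of the generalized gradient, Theorem \ref{w1:A.2}, applies to $\x_{n_k}\to\x^{*}$ and $w_{n_k}\to 0$ and gives $0\in\p f(\x^{*})$. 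Thus $\x^{*}$ is a critical point, which is exactly what $(PS)$ demands.

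For the compactness of $\operatorname{crit}(f)$, I would combine closedness and boundedness. Closedness was already noted in the definition of the critical set. Boundedness is immediate from the same estimate: if $0\in\p f(\x)$ then $\lam(\x)=0$, whence $\norm{B^{-1}}^{-1}\norm{\x}\le L$, i.e. $\norm{\x}\le\norm{B^{-1}}L$. A closed and bounded subset of $\R^{q}$ is compact.

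The only genuinely delicate point is the sum rule for the generalized gradient; but the argument needs no more than the inclusion $\p f(\x)\subset B\x + \p g(\x)$, which I would obtain cleanly from $f^{o}(\x;v)=\inp{B\x,v}+g^{o}(\x;v)$, and the standard bound $\p g(\x)\subset B(0,L)$. Everything else reduces to elementary estimates on the quadratic form $B$ together with the already-available closedness theorem, so I expect no substantial obstacle beyond bookkeeping.
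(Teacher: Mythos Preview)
Your proof is correct and follows essentially the same approach as the paper: both use the sum rule $\p f(\x)=dQ(\x)+\p(f-Q)(\x)$ together with the Lipschitz bound on $f-Q$ to get the coercivity estimate $\lam(\x)\ge\norm{dQ(\x)}-\Lip{f-Q}\to\infty$, forcing any Palais--Smale sequence to be bounded. Your write-up is in fact more complete than the paper's, since you explicitly invoke finite-dimensional compactness and the closedness of the generalized gradient (Theorem~\ref{w1:A.2}) to conclude that the limit of the convergent subsequence is a critical point, whereas the paper leaves this last step implicit.
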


\begin{proof}
If we define $\psi=f-Q,$ each subset $\p f(\x)=\p \psi(\x)+ dQ(\x)$ consists of vectors whose norm is at least $\norm{dQ(\x)}-\Lip{\psi},$ y hence $$\lam(x)\geq \norm{dQ(\x)}-\Lip{\psi},$$ so that $\lam(x)\to \infty$ where $\norm{x}\to \infty.$ Therefore, there exists $R>0$ such that each sequence $(x_{n})$ with $\lim \lam(x)=0$ satisface $\norm{x_{n}} \leq R$ for $n$ large enough, and this follows both $(PS)$ condition and compactness of $crit(f).$
\end{proof}

 \bibliography{testBib}{}
 \bibliographystyle{plain}
\end{document}